\definecolor{pinegreen}{rgb}{0.0, 0.47, 0.44}
\def\E{{\mathbb E}}
\def\Pr{{\mathbb{P}}}
\def\Re{\mathbb{R}}
\def\Qe{\mathbb{Q}}
\def\I{\mathbb{I}}
\def\hat{\widehat}
\def \bxi{\boldsymbol{\xi}}
\def \bzeta{\tilde{\bm{\zeta}}}
\def \P{\mathcal{P}}
\def \R{\mathcal{R}}
\def \Z{{\mathcal{Z}}}
\def\Ie{{\mathcal I}}
\def\P{{\mathcal P}}
\def\C{{\mathcal C}}
\def\Re{{\mathbb R}}
\newcommand{\exclude}[1]{}
\newcommand{\rxi}{\bm{\xi}}
 \newcommand{\y}{\bm{y}}
\newcommand{\e}{\bm{e}}
\def\CVaR{{\bf{CVaR}}}
\def\VaR{{\bf{VaR}}}
\newcommand{\zsetd}{Z}
\DeclareMathOperator{\conv}{conv}
\def\bl{\textcolor{black}}
\newcommand{\qedA}{$\Diamond$}
\newtheorem{theorem}{Theorem}
\newtheorem{claim1}{Claim}
\newtheorem{lemma}{Lemma}
\newtheorem{corollary}{Corollary}
\newtheorem{proposition}{Proposition}
\theoremstyle{definition}
\newtheorem{definition}{Definition}
\newtheorem{example}{Example}
\theoremstyle{remark}
\newtheorem{remark}{Remark}
\newcommand*{\TickSize}{1pt}
\newlength{\hatchspread}
\newlength{\hatchthickness}
\newlength{\hatchshift}
\tikzset{hatchspread/.code={\setlength{\hatchspread}{#1}},
         hatchthickness/.code={\setlength{\hatchthickness}{#1}},
         hatchshift/.code={\setlength{\hatchshift}{#1}},
         hatchcolor/.code={}}
\author[1]{Weijun Xie\thanks{Email: wxie@vt.edu.}}
\affil[1]{Department of Industrial and Systems Engineering\\ Virginia Tech, Blacksburg, VA 24061}
\title{On Distributionally Robust Chance Constrained Programs with Wasserstein Distance}
\date{\today}
\edef\sign{\pgfmathresult}%
\edef\x{\pgfmathresult}%
\edef\t{\pgfmathresult}%
\edef\y{\pgfmathresult}%
\begin{document}
\maketitle

\begin{abstract}
This paper studies a distributionally robust chance constrained program (DRCCP) with Wasserstein ambiguity set, where the uncertain constraints should {be satisfied} with a probability at least a given threshold for all the probability distributions of the uncertain parameters within a chosen Wasserstein distance from an empirical distribution. In this work, we investigate equivalent reformulations and approximations of such problems. We first show that a DRCCP can be reformulated as a conditional value-at-risk constrained optimization problem, and thus admits tight inner and outer approximations.  We also show that a DRCCP of bounded feasible region is mixed integer representable by introducing big-M coefficients and additional binary variables. For a DRCCP with pure binary decision variables, by exploring the submodular structure, we show that it admits a big-M free formulation, \bl{which} can be solved by {a} branch and cut algorithm. 
Finally, we present a numerical study to illustrate
the effectiveness of the proposed formulations.
\end{abstract}

\newpage
\section{Introduction}
\subsection{Setting}
\bl{We study distributional robust chance constrained programs (DRCCPs) of the form}:
\begin{subequations}\label{sp}
\begin{align}
\min & \ \ \bm{c}^{\top}\bm{x}, \label{sp-obj} \\
\rm{s.t. } & \ \ \bm{x} \in S, \label{sp-det} \\
& \ \ \inf_{\Pr\in \P}\Pr\left\{\tilde{\rxi}:\bm{a}(\bm{x})^{\top}\tilde{\rxi}_i\leq b_i(\bm{x}), \forall i\in [I]\right\} \ge 1-\epsilon. \label{sp-cc}
\end{align}
\end{subequations}
In \eqref{sp}, the vector $\bm x \in \Re^n$ {denotes} the decision {variables}; the vector $\bm c \in \Re^n$ denotes the objective {function} coefficients; the set $S \subseteq \Re^n$ denotes deterministic constraints on $\bm x$; and the constraint (\ref{sp-cc}) is a chance constraint involving $I$ uncertain constraints specified by the random vectors $\tilde{\rxi}_i$ supported on set $\Xi_i\subseteq \Re^{n+1}$ for each $i\in [I]$ with a joint probability distribution $\Pr$ from a family $\P$, termed ``\textit{ambiguity set}''. We let $[R]:=\{1,2,\ldots,R\}$ for any positive integer $R$, and for each uncertain constraint $i \in [I]$, $\bm{a}(\bm{x}) \in \Re^{n+1}$ and $b_i(\bm{x}) \in \Re$ denote affine mappings of $\bm{x}$ such that {$\bm{a}(\bm{x})= \begin{pmatrix}
\eta_1\bm{x}\\
\eta_2
\end{pmatrix}$ and $b_i(\bm{x})=\bm B_i^{\top} \bm x+b^i$ with parameters $\eta_1, \eta_2\in \{0,1\}, \eta_1+\eta_2\geq 1$, $\bm B_i\in \Re^{n}$, and $b^i\in \Re$, respectively. For notational convenience, we let $\Xi\subseteq\prod_{i\in [I]}\Xi_i$ and $\tilde{\rxi}=(\tilde{\rxi}_1,\ldots,\tilde{\rxi}_I)$. Note that (i) for any $i,j\in [I]$ and $i\neq j$, the random vectors $\tilde{\rxi}_{i}$ and $\tilde{\rxi}_j$ can be correlated; and (ii) we use $\eta_1, \eta_2$ to differentiate whether \eqref{sp-cc} involves left-hand uncertainty (i.e., $\eta_1=1,\eta_2=0$), right-hand uncertainty (i.e., $\eta_1=0,\eta_2=1$) or both-side uncertainty (i.e., $\eta_1=1,\eta_2=1$).}

The {distributionally robust chance constraint (DRCC) (\ref{sp-cc})} requires that all $I$ uncertain constraints are simultaneously satisfied for all the probability distributions from ambiguity set $\P$ with a probability at least $(1-\epsilon)$, where $\epsilon\in (0,1)$ is a specified risk tolerance. We call \eqref{sp} a {\em single} DRCCP if $I = 1$ and a {\em joint} DRCCP if $I \ge 2$. Also, \eqref{sp} is termed a DRCCP with {\em right-hand} uncertainty if $\eta_1=0,\eta_2=1$ and a DRCCP with {\em left-hand} uncertainty if $\eta_1=1,\eta_2=0$. For a joint DRCCP, if $I=2, \tilde{\rxi}_1=-\tilde{\rxi}_2$, we call \eqref{sp} as a \emph{two-sided} DRCCP.

We denote the feasible region induced by DRCC \eqref{sp-cc} as
\begin{equation}
\zsetd:=\left\{\bm{x}\in \Re^n:\inf_{\Pr\in \P}\Pr\left\{\tilde{\rxi}:\bm{a}(\bm{x})^{\top}\tilde{\rxi}_i \leq b_i(\bm{x}), \forall i\in [I]\right\} \ge 1-\epsilon\right\}. \label{zset}
\end{equation}

\exclude{
\subsection{Remark about the Setting}
We remark that the form of DRCC (\ref{sp-cc}) is quite general, for example, it covers the setting studied in \cite{hanasusanto2015distributionally}, which is:
\begin{align*}\inf_{\Pr\in \P}\Pr\left\{\hat{\rxi}:\bm{a}_i(\bm{x})^{\top}\hat{\rxi}\leq b_i(\bm{x}), \forall i\in [I]\right\}\ge 1-\epsilon.
\end{align*}
Above, $\bm{a}_i(\bm{x}) =\bm{A}_i\bm{x}+\bm{a}^i$ denotes an affine mapping with $\bm{A}_i\in \Re^{\hat{m}\times n}$ and $\bm{a}^i\in \Re^{\hat{m}}$ for each $i\in [I]$, and the random vector $\hat{\rxi}$ is supported on $\hat{\Xi}\subseteq \Re^{\hat{m}}$. Note that for each $i\in [I]$,
we have $\bm{a}_i(\bm{x})^{\top}\hat{\rxi}=\bm{x}^\top \left({\bm{A}}_i^\top \hat{\rxi}\right)+\left({\bm{a}}^i\right)^\top \hat{\rxi}$. Thus, in DRCC \eqref{sp-cc}, we let $\bm{a}(\bm{x})= \begin{pmatrix}
\eta_1\bm{x}\\
\eta_2
\end{pmatrix}$, where $\eta_1\in \{0,1\}$ indicates whether there exists $i\in [I]$ such that ${\bm{A}}_i\neq \bm{0}$ or not, and $\eta_2\in \{0,1\}$ indicates whether there exists $i\in [I]$ such that ${\bm{a}}^i\neq \bm{0}$ or not. We also define the random vector $\tilde{\bm\xi}_i=\begin{pmatrix}
{\bm{A}}_i^\top \hat{\rxi}\\
\left({\bm{a}}^i\right)^\top \hat{\rxi}
\end{pmatrix}$ for each $i\in [I]$, and their support $\Xi=\left\{\bm{\xi}: \exists \bar{\rxi}\in \hat{\Xi} ,\bm{\xi}_i=\begin{pmatrix}
{\bm{A}}_i^\top \bar{\rxi}\\
\left({\bm{a}}^i\right)^\top \bar{\rxi}
\end{pmatrix},\forall i\in [I]\right\}$. Thus, we arrive at the DRCC studied in \cite{hanasusanto2015distributionally}.
}

{\subsection{Assumptions}}


%

In this paper, we consider Wasserstein ambiguity set $\P$, {i.e., we make the following assumption on the ambiguity set $\P$.}
{\begin{enumerate}
	\setcounter{enumi}{0}
	\renewcommand{\theenumi}{(A\arabic{enumi})}
	\renewcommand{\labelenumi}{\theenumi}
	\item\label{assume_A1} The Wasserstein ambiguity set $\P$ is defined as
	\begin{align}\label{eq_general_das}
\P^{W}=\left\{\Pr:\Pr\left\{\tilde\rxi\in \Xi\right\}=1,W\left(\Pr,\Pr_{\bzeta}\right)\leq \delta\right\},
\end{align}
where Wasserstein distance is defined as
\[W\left(\Pr_1,\Pr_{2}\right)=\inf\left\{\int_{\Xi\times\Xi}\|{\bm{\xi}}_1-{\bm{\xi}}_2\|\Qe(d\bm{\xi}_1,d\bm{\xi}_2):\begin{array}{l}\text{$\Qe$ is a joint distribution of $\hat{\bm{\xi}}_1$ and $\hat{\bm{\xi}}_2$}\\
\text{with marginals $\Pr_1$ and $\Pr_2$, respectively}\end{array}\right\},\]
and $\Pr_{\bzeta}$ denotes a discrete empirical distribution of $\bzeta$ generated by i.i.d. samples $\Z=\{\bm{\zeta}^j\}_{j\in [N]}\subseteq \Xi$ from the true distribution $\Pr^{\infty}$, i.e., its point mass function is $\Pr_{\bzeta}\left\{\bzeta=\bm{\zeta}^j\right\}=\frac{1}{N}$, and $\delta>0$ denotes the Wasserstein radius. We assume that $(\Xi,\|\cdot\|)$ is a totally bounded Polish (separable complete metric) space with distance metric $\|\cdot\|$, i.e., for every $\hat{\epsilon}>0$, there exists a finite covering of $\Xi$ by balls with radius at most $\hat{\epsilon}$.
\end{enumerate}}

Note that the Wasserstein metric measures the distance between true distribution and empirical distribution and is able to recover the true distribution when the number of sampled data goes to infinity \cite{fournier2015rate}. {The fact that the convergence result is not affected by the support motivates us to consider relaxing the support $\Xi=\Re^{I\times (n+1)}$, which provides us better reformulation power.} {That is, we make the following assumption about the support $\Xi$.
\begin{enumerate}
	\setcounter{enumi}{1}
	\renewcommand{\theenumi}{(A\arabic{enumi})}
	\renewcommand{\labelenumi}{\theenumi}
	\item\label{assume_A2} The support $\Xi=\Re^{I\times (n+1)}$, i.e., $\Xi=\prod_{i\in [I]}\Xi_i$ and $\Xi_i=\Re^{n+1}$.
\end{enumerate}
We remark that 
\bl{\begin{itemize}
\item This assumption has been studied in recent DRCCP literature \cite{cheng2014distributionally,xie2017optimized,zhang2016drccbp};
\item By making this assumption, it might cause the DRCC (\ref{sp-cc}) to be more conservative than the general setting studied in \cite{hanasusanto2015distributionally};
\item The interdependence between different random vectors ${\bm{\xi}}_i$ can be inherited implicitly from the empirical distribution. For example, suppose that in the true distribution $\Pr^{\infty}$, we have $\Pr^{\infty}\left\{\rxi_{i_1}=\rxi_{i_2}\right\}=1$ for some $i_1,i_2\in [I]$ and $i_1\neq i_2$, then for any empirical sample $j\in [N]$, we must have $\bm{\zeta}_{i_1}^j= \bm{\zeta}_{i_2}^j$ with probability one. Since the empirical distribution will converge to the true distribution $\Pr^{\infty}$ according to Lemma 3.7 \cite{esfahani2015data} (i.e., when $N\rightarrow \infty$, $\delta\rightarrow 0$), thus Wasserstein Ambiguity set \eqref{eq_general_das} will eventually pick up the fact that $\Pr^{\infty}\left\{\rxi_{i_1}=\rxi_{i_2}\right\}=1$. However, this process might require many more samples than that without Assumption \ref{assume_A2};
\item In practice, one needs to choose a proper Wasserstein radius $\delta$ through cross validation \cite{esfahani2015data} to alleviate
the over-conservatism caused by Assumption \ref{assume_A2}, which will be illustrated in Section~\ref{sec_sep_numerical}. 
\end{itemize}
}

Finally, we suppose that Assumptions~\ref{assume_A1} and \ref{assume_A2} hold throughout the paper.

\subsection{Related Literature}
There are significant \bl{works} on reformulation, convexity and approximations of set $Z$ under various ambiguity sets \cite{calafiore2006distributionally,hanasusanto2015distributionally,hanasusanto2015Ambiguous,jiang2013data,Xie2016drccp,yang2014distributionally}).
For a single DRCCP, when $\P$ consists of all probability distributions with given first and second moments, the set $\zsetd$ is second-order conic representable \cite{calafiore2006distributionally,el2003worst}. Similar convexity results hold for single DRCCP when $\P$ also incorporates other distributional information such as the support of $\tilde{\rxi}$ \cite{cheng2014distributionally}, the unimodality of $\Pr$ \cite{hanasusanto2015distributionally,li2016ambiguous}, or arbitrary convex mapping of $\tilde{\rxi}$ \cite{Xie2016drccp}. 
For a joint DRCCP, \cite{hanasusanto2015Ambiguous} provided the first convex reformulation of $\zsetd$ in the absence of coefficient uncertainty, i.e., $\eta_1 = 0$, when $\P$ is characterized by the mean, a positively homogeneous dispersion measure, and conic support of $\tilde{\rxi}$. For the more general coefficient uncertainty setting, \cite{Xie2016drccp} identified several sufficient conditions for $\zsetd$ to be convex (e.g., when $\P$ is specified by only one moment constraint), and \cite{xie2016opf} showed that $\zsetd$ is convex for two-sided DRCCP when $\P$ is characterized by the first two moments.

When DRCC set $Z$ is not convex, many inner convex approximations have been proposed. 
In \cite{chen2010cvar}, the authors proposed to aggregate the multiple uncertain constraints with positive scalars in to a single constraint, and then use conditional value-at-risk ($\CVaR$) approximation scheme \cite{nemirovski2006convex} to develop an inner approximation of $\zsetd$. This approximation is shown to be exact for single DRCCP when $\P$ is specified by first and second moments in \cite{zymler2013distributionally} or, more generally, by convex moment constraints in~\cite{Xie2016drccp}. In \cite{xie2017optimized}, the authors provided several sufficient conditions under which the well-known Bonferroni approximation of joint DRCCP is exact and yields a convex reformulation.

Recently, there are many successful developments on data-driven distributionally robust programs with Wasserstein ambiguity set \eqref{eq_general_das} \cite{gao2016distributionally,esfahani2015data,zhao2015data_b}. For instance, \cite{gao2016distributionally,esfahani2015data} studied its reformulation under different settings. Later on, \cite{blanchet2016robust,gao2017wasserstein,lee2017minimax,shafieezadeh2015distributionally} applied it to the optimization problems related with machine learning. Other relevant works can be found \cite{blanchet2018distributionally,hanasusanto2016conic,kiesel2016wasserstein,luo2017decomposition}. However, there is very limited literature on DRCCP with Wasserstein ambiguity set.  In \cite{Xie2018approx}, the authors proved that it is strongly NP-hard to optimize over the DRCC set $Z$ with Wasserstein ambiguity set and proposed a bicriteria approximation for a class of DRCCP with covering uncertain constraints (i.e., $S$ is a closed convex cone and $\Xi_i\in \Re_-^n,\bm{B}_i\in \Re_+^n, b_i\in \Re_{-}$ for each $i\in [I]$). In \cite{duan2018distributionally}, the authors considered two-sided DRCCP with right-hand uncertainty and proposed its tractable reformulation, while in \cite{hota2018data}, the authors studied CVaR approximation of DRCCP. \bl{While this paper was under review, we became aware of the independent works \cite{chen2018data,ji2018data}, which developed approximations and exact reformulations DRCCP with Wasserstein ambiguity set. Similar to this paper, in \cite{chen2018data}, the authors also derived exact mixed integer programming reformulations for single DRCCP and DRCCP with right-hand uncertainty using a different proof technique. In \cite{ji2018data}, the authors provided exact reformulations for single DRCCP under discrete support (i.e., $|\Xi|<\infty$) and approximations for single DRCCP under continuous support.}

\subsection{Contributions}
In this paper, we study approximations and exact reformulations of DRCCP under Wasserstein ambiguity set. In particular, our main contributions are summarized as below.
\begin{enumerate}
\item We derive a deterministic equivalent reformulation for set $Z$ and show that this reformulation admits a conditional value-at-risk ($\CVaR$) interpretation, { i.e.,
	\begin{equation*}
	\zsetd=\left\{\bm{x}\in \Re^n:\frac{\delta}{\epsilon}+\CVaR_{1-\epsilon}\left[-f(\bm x,\bzeta)\right]\leq 0\right\}, 
	\end{equation*}
	where $f(\cdot,\cdot)$ is defined in Theorem~\ref{thm_exact_norm}.}
\item {We show that set $Z$, once bounded, is mixed integer representable with big-M coefficients and $N$ additional binary variables.}

\item {We derive inner and outer approximations based upon $\CVaR$ interpretation. We develop compact formulations for these approximations and compare their strengths.}


\item When the decision variables are pure binary (i.e., $S\subseteq \{0,1\}^n$), we first show that the nonlinear constraints in the reformulation can be recast as submodular knapsack constraints. Then, by {exploiting} the polyhedral properties of submodular functions, we propose a new big-M free mixed integer linear reformulation\bl{, which }
can be effectively solved by {a} branch and cut algorithm.
\end{enumerate}


The remainder of the paper is organized as follows. Section \ref{sec_general} presents exact {reformulations} of DRCC set $Z$. {Section \ref{sec_Approximation} provides inner and outer approximations of set $Z$ and compares their strengths.} Section \ref{sec_DRMKP} studies binary DRCCP (i.e., $S\subseteq \{0,1\}^n$), develops a big-M free formulation. {Section~\ref{sec_sep_numerical}} numerically illustrates the proposed methods. Section~\ref{sec_conclusion} concludes the paper.\\

\noindent {\em Notation:} The following notation is used throughout the paper. We use bold-letters (e.g., $\bm{x},\bm{A}$) to denote vectors or matrices, and use corresponding non-bold letters to denote their components. We let $\e$ be the all-ones vector, and let $\e_i$ be the $i$th standard basis vector. 
Given an integer $n$, we let $[n]:=\{1,2,\ldots,n\}$, and use $\Re_+^n:=\{\bm{x}\in \Re^n:x_l\geq0, \forall l\in [n]\}$ and $\Re_-^n:=\{\bm{x}\in \Re^n:x_l\leq0, \forall l\in [n]\}$. Given a real number $t$, we let $(t)_+:=\max\{t,0\}$. Given a finite set $I$, we let $|I|$ denote its cardinality. We let $\tilde{\bm{\xi}}$ denote a random vector with support $\Xi$ and denote one of its realization by $\bm{\xi}$. Given a set $R$, the characteristic function $\chi_{R}(\bm{x})=0$ if $\bm{x}\in R$, and $\infty$, otherwise, while the indicator function $\I(\bm{x}\in R)$ =1 if $\bm{x}\in R$, and 0, otherwise. 
For a matrix $\bm{A}$, we let $\bm{A}_{i\bullet}$ denote $i$th row of $\bm{A}$ and $\bm{A}_{\bullet j}$ denote $j$th column of $\bm{A}$. 
Additional notation will be introduced as needed. Given a subset $T\subseteq [n]$, we define an $n$-dimensional binary vector $\e_T$ as $(\e_T)_{\tau}=\begin{cases}
1,& \text{if }\tau\in T\\
0,& \text{if }\tau\in [n]\setminus T
\end{cases}$.

{
\section{Exact Reformulations}\label{sec_general}

In this section, we will show that DRCC set $\zsetd$ admits a conditional-value-at-risk ($\CVaR$) interpretation and is mixed integer representable. 
This reformulation 
also allows us to derive tight inner and outer approximations {in next section}.

\subsection{$\CVaR$ Reformulation}
In this subsection, we will reformulate the set $\zsetd$ into its deterministic counterpart with respect to empirical distribution. The main idea of this reformulation is that we first use the strong duality result from \cite{blanchet2016quantifying,gao2016distributionally} to formulate the worst-case chance constraint into its dual form, and then break down the indicator function according to its definition.
\begin{theorem}\label{thm_exact_norm}
	Set $\zsetd$ is equivalent to 
	\begin{subequations}\label{zset_P_W_2}
		\begin{empheq}[left={\zsetd=\empheqlbrace{\bm{x}\in \Re^n:}},right=\empheqrbrace]{align}
		& \delta-\epsilon\gamma\leq \frac{1}{N}\sum_{j\in [N]} \min\left\{f(\bm{x},\bm{\zeta}^j)-\gamma,0\right\} ,\label{eq_gamma_delta}\\
		&\gamma\geq 0,
		\end{empheq}
	\end{subequations}
	where\begin{align}
	f(\bm{x},\bm{\zeta})=\min\left\{\min_{i\in [I]\setminus\Ie(\bm x)}\frac{\max\left\{b_i(\bm{x})-\bm{a}(\bm{x})^{\top}\bm{\zeta}_i,0\right\}}{\|\bm{a}(\bm{x})\|_*}, \min_{i\in \Ie(\bm x)}\chi_{\{\bm{x}:b_i(\bm{x})<0\}}(\bm x)\right\},\label{eq_def_f}
	\end{align}
and $\Ie(\bm x)=\emptyset$ if $\bm{a}(\bm{x})\neq0$ and $\Ie(\bm x)=[I]$, otherwise, and characteristic function $\chi_{\R}(\bm x)=\infty$ if $\bm x\notin \R$ and 0, otherwise. 
\end{theorem}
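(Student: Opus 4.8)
The plan is to rewrite the distributionally robust chance constraint as a worst-case expectation of a violation indicator, invoke the Wasserstein strong-duality identity, evaluate the resulting univariate inner maximization explicitly as a truncated point-to-halfspace distance, and finally reparametrize the dual multiplier to land on \eqref{zset_P_W_2}. Concretely, first set $g(\bm{x},\bm{\xi}):=\max_{i\in[I]}\bigl(\bm{a}(\bm{x})^{\top}\bm{\xi}_i-b_i(\bm{x})\bigr)$, so that the event in \eqref{sp-cc} is $\{g(\bm{x},\tilde{\bm{\xi}})\le 0\}$ and $\bm{x}\in\zsetd$ if and only if $\sup_{\Pr\in\P^{W}}\E_{\Pr}\bigl[\I(g(\bm{x},\tilde{\bm{\xi}})>0)\bigr]\le\epsilon$. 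I would then dispose of the degenerate case $\bm{a}(\bm{x})=\bm{0}$ directly: there $g(\bm{x},\cdot)\equiv-\min_{i\in[I]}b_i(\bm{x})$ is constant, the worst-case expectation equals $\I\bigl(\min_{i\in[I]}b_i(\bm{x})<0\bigr)$, and since $\Ie(\bm{x})=[I]$ the quantity $f(\bm{x},\bm{\zeta}^j)$ equals $0$ for all $j$ when some $b_i(\bm{x})<0$ and $+\infty$ for all $j$ otherwise, so a one-line check in each of these subcases (using $\epsilon\in(0,1)$ and $\delta>0$) matches membership in \eqref{zset_P_W_2}. The remaining work is for $\bm{a}(\bm{x})\neq\bm{0}$, where $\Ie(\bm{x})=\emptyset$.

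For $\bm{a}(\bm{x})\neq\bm{0}$ I would apply the strong-duality identity of \cite{blanchet2016quantifying,gao2016distributionally} to the closed set $\{g(\bm{x},\cdot)\ge 0\}$ (noting via a standard limiting argument that the open and closed violation sets yield the same worst-case value, since $\overline{\{g(\bm{x},\cdot)>0\}}=\{g(\bm{x},\cdot)\ge 0\}$ here, and that the bounded indicator cost makes the dual finite), obtaining
\begin{align*}
\sup_{\Pr\in\P^{W}}\E_{\Pr}\bigl[\I(g(\bm{x},\tilde{\bm{\xi}})>0)\bigr]=\inf_{\lambda\ge 0}\Bigl\{\lambda\delta+\tfrac1N\sum_{j\in[N]}\sup_{\bm{\xi}}\bigl[\I(g(\bm{x},\bm{\xi})\ge 0)-\lambda\|\bm{\xi}-\bm{\zeta}^j\|\bigr]\Bigr\}.
\end{align*}
For each $j$ and $\lambda\ge 0$, splitting the inner supremum over $\{g(\bm{x},\cdot)\ge 0\}$ and its complement and using $\I(\cdot)\in\{0,1\}$ collapses it to $\max\bigl\{0,\,1-\lambda\,\mathrm{dist}(\bm{\zeta}^j,\{g(\bm{x},\cdot)\ge0\})\bigr\}$; and since $\{g(\bm{x},\cdot)\ge0\}=\bigcup_{i\in[I]}\{\bm{\xi}:\bm{a}(\bm{x})^{\top}\bm{\xi}_i\ge b_i(\bm{x})\}$ while moving blocks other than $i$ only increases the distance, the distance to the $i$-th halfspace is the classical $\bigl(b_i(\bm{x})-\bm{a}(\bm{x})^{\top}\bm{\zeta}^j_i\bigr)_+/\|\bm{a}(\bm{x})\|_*$, whose minimum over $i$ is exactly $f(\bm{x},\bm{\zeta}^j)$. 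Thus the inner supremum equals $\bigl(1-\lambda f(\bm{x},\bm{\zeta}^j)\bigr)_+$.

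Substituting this back, $\bm{x}\in\zsetd$ iff $\inf_{\lambda\ge0}h(\lambda)\le\epsilon$, where $h(\lambda):=\lambda\delta+\tfrac1N\sum_{j\in[N]}\bigl(1-\lambda f(\bm{x},\bm{\zeta}^j)\bigr)_+$ is convex, piecewise linear, and coercive on $[0,\infty)$ since $\delta>0$; hence the infimum is attained, and because $h(0)=1>\epsilon$ every minimizer is strictly positive. The change of variables $\gamma=1/\lambda>0$, multiplication through by $\gamma$, and the identities $\gamma(1-f/\gamma)_+=(\gamma-f)_+$ and $-(\gamma-f)_+=\min\{f-\gamma,0\}$ then turn $h(\lambda)\le\epsilon$ into $\delta-\epsilon\gamma\le\tfrac1N\sum_{j\in[N]}\min\bigl\{f(\bm{x},\bm{\zeta}^j)-\gamma,0\bigr\}$ with $\gamma>0$; adjoining the never-binding bound $\gamma\ge0$ recovers \eqref{zset_P_W_2}.

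The step I expect to be the main obstacle is the explicit evaluation of the inner supremum as a truncated single-block point-to-halfspace distance: I must justify that the optimal relocation of mass away from $\bm{\zeta}^j$ perturbs only one coordinate block $\bm{\xi}_i$, handle the truncation at $0$ (the case where no beneficial perturbation exists) and the degenerate direction $\bm{a}(\bm{x})=\bm{0}$ (where $\|\bm{a}(\bm{x})\|_*=0$ forces the $\chi$-branch and the $+\infty$ values of $f$), and carry those $+\infty$ values consistently through the reparametrization. The measure-theoretic point that replacing the open violation set by its closure leaves the worst-case value unchanged is routine but also deserves a line.
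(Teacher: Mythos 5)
Your proposal is correct and follows essentially the same route as the paper's proof: invoke the Wasserstein strong-duality identity, evaluate the inner optimization for each sample as a truncated point-to-set distance $\bigl(1-\lambda f(\bm{x},\bm{\zeta}^j)\bigr)_+$ with $f$ as in \eqref{eq_def_f}, and substitute $\gamma=1/\lambda$, using $\epsilon\in(0,1)$ to exclude $\lambda=0$ and $\delta>0$ to exclude $\gamma=0$. The only differences are organizational (you isolate the $\bm{a}(\bm{x})=\bm{0}$ case up front and argue attainment of the dual infimum via coercivity rather than the paper's two-way inclusion), and your replacement of the open violation set by its closure is in fact a slightly more careful way to meet the upper-semicontinuity hypothesis of the duality theorem.
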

\begin{subequations}
\begin{proof} We separate the proof into three steps, where the first step is to apply strong duality result for distributionally robust optimization, the second step is to break down the indicator function, and the third step is to replace the dual variable with its reciprocal.
\begin{enumerate}[(i)]
\item	Note that
	\[\inf_{\Pr\in \P}\Pr\left\{\tilde{\rxi}:\bm{a}(\bm{x})^{\top}\tilde{\rxi}_i\leq b_i(\bm{x}), \forall i\in [I]\right\}\geq 1-\epsilon\]
	is equivalent to
	\[\sup_{\Pr\in \P}\Pr\left\{\tilde{\rxi}:\bm{a}(\bm{x})^{\top}\tilde{\rxi}_i> b_i(\bm{x}), \exists i\in [I]\right\}\leq \epsilon.\]
	
Since
$$\Pr\left\{\tilde{\rxi}:\bm{a}(\bm{x})^{\top}\tilde{\rxi}_i> b_i(\bm{x}), \exists i\in [I]\right\}=\E_{\Pr}\left[\I\left(\bm{a}(\bm{x})^{\top}\tilde{\rxi}_i> b_i(\bm{x}), \exists i\in [I]\right)\right]$$
and the indicator function is always bounded and upper semi-continuous, therefore, according to Theorem 1 in \cite{gao2016distributionally} or Theorem 1 in \cite{blanchet2016quantifying}, $\sup_{\Pr\in \P}\Pr\left\{\tilde{\rxi}:\bm{a}(\bm{x})^{\top}\tilde{\rxi}_i> b_i(\bm{x}), \exists i\in [I]\right\}$ is equivalent to
	\begin{align}
	& \min_{\lambda \geq 0} \left\{\lambda \delta-\frac{1}{N}\sum_{j\in [N]} \inf_{\rxi}\left[\lambda {\|\rxi-\bm{\zeta}^j\|}-\I\left(\bm{a}(\bm{x})^{\top}\rxi_i> b_i(\bm{x}), \exists i\in [I]\right)\right]\right\}.\label{sp_svm1}
	\end{align}
	
	Thus, set $\zsetd$ becomes
	\begin{equation}
	\zsetd:=\left\{\bm{x}\in \Re^n:\lambda \delta-\frac{1}{N}\sum_{j\in [N]} \inf_{\rxi}\left[\lambda {\|\rxi-\bm{\zeta}^j\|}-\I\left(\bm{a}(\bm{x})^{\top}\rxi_i> b_i(\bm{x}), \exists i\in [I]\right)\right]\leq \epsilon, \exists\lambda\geq 0\right\}. \label{zset_rhs}
	\end{equation}
	
\item Next, we break down the indicator function in the infimum of \eqref{zset_rhs} \bl{by discussing the conditions under which it is equal to zero or one }and reformulate it as below.\\
	\begin{claim1}\bl{For} given $\lambda \geq 0$ and $\bm{\zeta}\in \Z$, we have
		\begin{align}
		& \inf_{\rxi}\left[ \lambda {\|\rxi-\bm{\zeta}\|}-\I\left(\bm{a}(\bm{x})^{\top}\rxi_i> b_i(\bm{x}), \exists i\in [I]\right)\right]=\min\left\{\min_{i\in [I]}\inf_{ \bm{a}(\bm{x})^{\top}\rxi_i> b_i(\bm{x})}\left[ \lambda {\|\rxi-\bm{\zeta}\|}-1\right],0\right\}.\label{sp_svm_p2}
		\end{align}
	\end{claim1}
	\begin{proof}We first note that $\I\left(\bm{a}(\bm{x})^{\top}\rxi_i> b_i(\bm{x}), \exists i\in [I]\right)=\max_{i\in [I]}\I\left(\bm{a}(\bm{x})^{\top}\rxi_i> b_i(\bm{x})\right)$. Thus,
		\begin{align*}
		& \inf_{\rxi}\left[ \lambda {\|\rxi-\bm{\zeta}\|}-\I\left(\bm{a}(\bm{x})^{\top}\rxi_i> b_i(\bm{x}), \exists i\in [I]\right)\right]=\min_{i\in [I]}\inf_{\rxi}\left[ \lambda {\|\rxi-\bm{\zeta}\|}-\I\left(\bm{a}(\bm{x})^{\top}\rxi_i> b_i(\bm{x})\right)\right].
		\end{align*}
		
		Therefore, we only need to show that for any $i\in [I]$,
		\begin{align}
		& \inf_{\rxi}\left[ \lambda {\|\rxi-\bm{\zeta}\|}-\I\left(\bm{a}(\bm{x})^{\top}\rxi_i> b_i(\bm{x})\right)\right]=\min\left\{\inf_{\bm{a}(\bm{x})^{\top}\rxi_i> b_i(\bm{x})}\left[ \lambda {\|\rxi-\bm{\zeta}\|}-1\right],0\right\}.\label{sp_svm_p3}
		\end{align}	
		
		There are two cases:
		\begin{enumerate}[{Case} 1.]
			\item If $\bm{a}(\bm{x})^{\top}\bm{\zeta}_i> b_i(\bm{x})$, then in the left-hand side of \eqref{sp_svm_p3}, the infimum is equal to $-1$ by letting $\rxi:=\bm{\zeta}$, which equals the right-hand side since the infimum is also achieved by $\rxi:=\bm{\zeta}$.
			\item If $\bm{a}(\bm{x})^{\top}\bm{\zeta}_i\leq b_i(\bm{x})$, then for any $\rxi\in \Xi$, we either have $\bm{a}(\bm{x})^{\top}\rxi_i> b_i(\bm{x})$ or $\bm{a}(\bm{x})^{\top}\rxi_i\leq b_i(\bm{x})$. Hence, the left-hand side of \eqref{sp_svm_p3} is equivalent to
			\begin{align*}
			& \inf_{\rxi}\left[ \lambda {\|\rxi-\bm{\zeta}\|}-\I\left(\bm{a}(\bm{x})^{\top}\rxi_i> b_i(\bm{x})\right)\right]\\
=&\min\left\{\inf_{\bm{a}(\bm{x})^{\top}\rxi_i> b_i(\bm{x})}\left[ \lambda {\|\rxi-\bm{\zeta}\|}-1\right],\inf_{\bm{a}(\bm{x})^{\top}\rxi_i\leq b_i(\bm{x})}\left[\lambda {\|\rxi-\bm{\zeta}\|}\right]\right\}\\
			=&\min\left\{\inf_{\bm{a}(\bm{x})^{\top}\rxi_i> b_i(\bm{x})}\left[ \lambda {\|\rxi-\bm{\zeta}\|}-1\right],0\right\},
			\end{align*}
			where $\inf_{\bm{a}(\bm{x})^{\top}\rxi_i\leq b_i(\bm{x})}\left[ {\|\rxi-\bm{\zeta}\|}\right]=0$ by letting $\rxi:=\bm{\zeta}$.
		\end{enumerate}
		\qedA
	\end{proof}
	According to Claim 1 and the fact that
\[\inf_{ \bm{a}(\bm{x})^{\top}\rxi_i> b_i(\bm{x})}\|\rxi-\bm{\zeta}\|=\begin{cases}
\frac{\max\left\{b_i(\bm{x})-\bm{a}(\bm{x})^{\top}\bm{\zeta}_i,0\right\}}{\|\bm{a}(\bm{x})\|_*}&\text{ if }\bm{a}(\bm{x})\neq \bm{0}\\
\chi_{\{\bm{x}:b_i(\bm{x})<0\}}(\bm x)&\text{ otherwise}\\
\end{cases},\]
set $Z$ becomes
			\begin{equation}
	\zsetd=\left\{\bm{x}\in \Re^n:\lambda\delta-\epsilon\leq \frac{1}{N}\sum_{j\in [N]} \min\left\{\lambda f(\bm{x},\bm{\zeta}^j)-1,0\right\},\lambda\geq 0\right\} .\label{eq_gamma_delta3}
		\end{equation}

\item Finally, let $\zsetd'$ denote the set in the right-hand side of \eqref{zset_P_W_2} , we only need to show that 
$\zsetd=\zsetd'$.
\begin{enumerate}
\item[$(\zsetd\subseteq\zsetd')$] Given $\bm{x}\in \zsetd$, there exists $\lambda\geq 0$ such that $(\bm{x},\lambda)$ satisfies \eqref{zset_P_W_2}. If $\lambda> 0$, then let $\gamma=\frac{1}{\lambda}$. Then it is easy to see that $(\bm{x},\gamma)$ satisfies \eqref{zset_P_W_2} . Hence, $\bm{x}\in \zsetd'$.

Now suppose that $\lambda=0$, then in \eqref{zset_P_W_2}, we have
\[-\epsilon\leq -1\]
a contradiction that $\epsilon \in (0,1)$.

\item[$(\zsetd\supseteq\zsetd')$] Similarly, given $\bm{x}\in \zsetd'$, there exists $\gamma\geq 0$ such that $(\bm{x},\gamma)$ satisfies \eqref{zset_P_W_2} . If $\gamma> 0$, then let $\lambda=\frac{1}{\gamma}$. Then it is easy to see that $(\bm{x},\lambda)$ satisfies \eqref{zset_P_W_2}. Hence, $\bm{x}\in \zsetd$.

Now suppose that $\gamma=0$, then in \eqref{zset_P_W_2} , we have
\[ \min\left\{f(\bm{x},\bm{\zeta}^j)-\gamma,0\right\}:=0\]
for each $j\in [N]$. Thus, \eqref{zset_P_W_2}  reduces to $\delta\leq 0$
contradicting that $\delta>0$.\qed
\end{enumerate}
\end{enumerate}

\end{proof}

\end{subequations}
Please note that in the proof, we use the fact that $\delta>0$ from Assumption~\ref{assume_A1}, and the formulation \eqref{zset_P_W_2} does not hold if $\delta=0$.

An interesting corollary of Theorem~\ref{thm_exact_norm} is that set $Z$ can be reformulated as a conditional-value-at-risk ($\CVaR$) constrained set. Before showing this interpretation, let us first introduce the following two definitions. Given a random variable $\tilde{X}$, let $\Pr$ and $F_{\tilde{X}}(\cdot)$ be its probability distribution and cumulative distribution function, respectively.
Then $(1-\epsilon)$-value at risk (\VaR) of $\tilde{X}$ is
\[\VaR_{1-\epsilon}(\tilde{X}):=\min\left\{s: F_{\tilde{X}}(s)\geq 1-\epsilon\right\},\]
while its $(1-\epsilon)$-conditional value-at-risk (\CVaR) \cite{rockafellar2000optimization} is defined as 
\[\CVaR_{1-\epsilon}(\tilde{X}):=\min_{\beta}\left\{\beta+\frac{1}{\epsilon}\E_{\Pr}\left[\tilde{X}-\beta\right]_+\right\}.\]
With the definitions above, we observe that set $Z$ in \eqref{zset_P_W_2} has a $\CVaR$ interpretation. 
\begin{corollary}\label{ref_math_thm_cor}Set $\zsetd$ is equivalent to 
	\begin{equation}
	\zsetd=\left\{\bm{x}\in \Re^n:\frac{\delta}{\epsilon}+\CVaR_{1-\epsilon}\left[-f(\bm x,\bzeta)\right]\leq 0\right\}, \label{zset_rhs_new2}
	\end{equation}
	where $f(\cdot,\cdot)$ is defined in \eqref{eq_def_f},
	and $\CVaR_{1-\epsilon}\left[-f(\bm x,\bzeta)\right]=\min_{\gamma}\left\{\gamma+\frac{1}{\epsilon}\E_{\Pr_{\bzeta}}\left[-f(\bm x,\bzeta)-\gamma\right]_+\right\}$.
\end{corollary}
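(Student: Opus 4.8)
The plan is to show that Corollary~\ref{ref_math_thm_cor} is essentially a restatement of Theorem~\ref{thm_exact_norm} after recognizing the right-hand side of \eqref{zset_P_W_2} as the epigraph form of a $\CVaR$ constraint. Concretely, I would start from the characterization
\[
\zsetd=\left\{\bm x\in\Re^n:\ \exists\,\gamma\ge 0,\ \ \delta-\epsilon\gamma\le\frac{1}{N}\sum_{j\in[N]}\min\{f(\bm x,\bm\zeta^j)-\gamma,0\}\right\}
\]
given by Theorem~\ref{thm_exact_norm}, and manipulate the inner constraint into the standard Rockafellar--Uryasev form. The first step is the algebraic identity $\min\{t-\gamma,0\}=-\max\{\gamma-t,0\}=-(\gamma-t)_+$ applied with $t=f(\bm x,\bm\zeta^j)$, which turns the sum on the right into $-\frac{1}{N}\sum_{j\in[N]}\big(\gamma-f(\bm x,\bm\zeta^j)\big)_+ = -\E_{\Pr_{\bzeta}}\big[(-f(\bm x,\bzeta)-(-\gamma))_+\big]$, using that $\Pr_{\bzeta}$ puts mass $1/N$ on each $\bm\zeta^j$.

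Next I would rearrange $\delta-\epsilon\gamma\le -\E_{\Pr_{\bzeta}}[(-f(\bm x,\bzeta)+\gamma)_+]$ by dividing through by $\epsilon>0$ and moving terms, obtaining
\[
\frac{\delta}{\epsilon}\ \le\ \gamma-\frac{1}{\epsilon}\E_{\Pr_{\bzeta}}\big[(-f(\bm x,\bzeta)+\gamma)_+\big],
\]
equivalently $\frac{\delta}{\epsilon}+\Big(-\gamma+\frac{1}{\epsilon}\E_{\Pr_{\bzeta}}[(-f(\bm x,\bzeta)+\gamma)_+]\Big)\le 0$. Writing $\beta=-\gamma$, the parenthesized expression is exactly $\beta+\frac{1}{\epsilon}\E_{\Pr_{\bzeta}}[(-f(\bm x,\bzeta)-\beta)_+]$, so the existence of a feasible $\gamma\ge 0$ (equivalently $\beta\le 0$) satisfying the inequality says that the $\CVaR$ infimum, \emph{restricted to $\beta\le 0$}, is at most $-\delta/\epsilon$. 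The remaining point is to argue that the restriction $\beta\le 0$ can be dropped, i.e.\ that $\min_{\beta\le 0}\{\beta+\frac{1}{\epsilon}\E[(-f-\beta)_+]\}=\min_{\beta\in\Re}\{\beta+\frac{1}{\epsilon}\E[(-f-\beta)_+]\}=\CVaR_{1-\epsilon}[-f(\bm x,\bzeta)]$ whenever $\bm x$ is such that the constraint can hold at all. This follows because at an optimal $\beta^*$ of the unconstrained $\CVaR$ problem one has $\beta^*=\VaR_{1-\epsilon}[-f(\bm x,\bzeta)]$, and when $\frac{\delta}{\epsilon}+\CVaR_{1-\epsilon}[-f]\le 0$ with $\delta>0$ we get $\CVaR_{1-\epsilon}[-f]<0$, hence $\VaR_{1-\epsilon}[-f]\le\CVaR_{1-\epsilon}[-f]<0$, so an optimal $\beta^*\le 0$ exists and the constrained and unconstrained infima agree; conversely if the unconstrained $\CVaR$ constraint holds then the same $\beta^*\le 0$ is feasible for the constrained problem.

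The main obstacle, such as it is, is this last bookkeeping about the sign constraint $\gamma\ge 0$ versus the free minimization variable in the definition of $\CVaR$: one must be careful that dropping $\gamma\ge 0$ does not enlarge the feasible set. I expect to handle it exactly as above — using $\delta>0$ to force $\CVaR_{1-\epsilon}[-f]<0$ on any feasible $\bm x$, which pins the optimal $\beta$ to be nonpositive — rather than appealing to monotonicity of the objective in $\gamma$ (which fails globally). Everything else is the routine identity $\min\{a,0\}=-( -a)_+$ and relabeling $\beta=-\gamma$, together with the fact that $\Pr_{\bzeta}$ is the uniform empirical measure so that $\frac1N\sum_j(\cdot)=\E_{\Pr_{\bzeta}}[\cdot]$.
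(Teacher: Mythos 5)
Your proposal is correct and follows essentially the same route as the paper's proof: start from Theorem~\ref{thm_exact_norm}, use $\min\{f-\gamma,0\}=-(\gamma-f)_+$ together with the uniform weights of $\Pr_{\bzeta}$, substitute $\beta=-\gamma$, and recognize the Rockafellar--Uryasev form of $\CVaR$. The only (immaterial) difference is the bookkeeping on the sign of $\gamma$: the paper observes that the constraint itself forces $\gamma\ge\delta/\epsilon>0$, so the restriction $\gamma\ge 0$ is vacuous and can be dropped before minimizing, whereas you keep the restriction $\beta\le 0$ and show that on the $\CVaR$-feasible set the unconstrained minimizer $\beta^*=\VaR_{1-\epsilon}[-f(\bm x,\bzeta)]\le\CVaR_{1-\epsilon}[-f(\bm x,\bzeta)]<0$, so the restricted and unrestricted infima agree --- both arguments are valid.
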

\begin{proof}First, we observe that the constraint in \eqref{zset_P_W_2} directly implies $\gamma\geq 0$, thus the nonnegativity constraint of $\gamma$ can be dropped, i.e., equivalently, we have
	\begin{equation*}
	\zsetd=\left\{\bm{x}\in \Re^n:\frac{\delta}{\epsilon}-\gamma+\frac{1}{N\epsilon}\sum_{j\in [N]} \max\left\{-f(\bm{x},\bm{\zeta}^j)+\gamma,0\right\}\leq 0\right\}.
	\end{equation*}	
	Next, in the above formulation, letting \bl{$\gamma':=-\gamma$ and replacing the existence of $\gamma'$ by finding the best $\gamma'$ such that the constraint still holds, we arrive at
		\begin{equation*}
	\zsetd=\left\{\bm{x}\in \Re^n:\frac{\delta}{\epsilon}+\min_{\gamma'}\left\{\gamma'+\frac{1}{N\epsilon}\sum_{j\in [N]} \max\left\{-f(\bm{x},\bm{\zeta}^j)-\gamma',0\right\}\right\} \leq 0\right\},
	\end{equation*}	}
	which is equivalent to \eqref{zset_rhs_new2}.\qed
\end{proof}

In the following sections, we will derive the inner and outer approximations mainly based upon $\CVaR$ formulation in Corollary~\ref{ref_math_thm_cor}.

\subsection{Exact Mixed Integer Program Reformulation}
In this subsection, we show that set $\zsetd$ is mixed integer representable. To do so, we first observe that the reformulation of set $\zsetd$ in Theorem \ref{thm_exact_norm} can be further simplified as a disjunction of a nonconvex set and a convex set.
\begin{proposition}\label{thm_reform_joint_drccp} 
Set $\zsetd=\zsetd_{1}\cup\zsetd_{2}$, where
	\begin{subequations}\label{zset_P_W_I_1-M}
		\begin{empheq}[left={\zsetd_{1}=\empheqlbrace{\bm{x}\in \Re^n:}},right=\empheqrbrace]{align}
		& \delta \nu-\epsilon\gamma \leq \frac{1}{N}\sum_{j\in [N]}z_j,\label{z_v_I_1-M}\\
		&z_j+\gamma\leq \max\left\{b_i(\bm{x})-\bm{a}(\bm{x})^{\top}\bm{\zeta}_i^j,0\right\}, \forall i\in [I],j\in [N],\label{z_lambda_a_b_I_1-M}\\
		&z_j\leq 0, \forall j\in [N],\label{z_0_I_1-M}\\
		&\|\bm{a}(\bm{x})\|_* \leq \nu,\label{a_v_I_1-M}\\
		&\nu>0,\gamma\geq 0,\label{a_v_I_1-M_nu}
		\end{empheq}
	\end{subequations}
	and
	\begin{empheq}[left={\zsetd_{2}=\empheqlbrace{\bm{x}\in \Re^n:}},right=\empheqrbrace.]{align}
	\bm{a}(\bm{x})=\bm0,b_i(\bm{x})\geq 0,\forall i\in [I]\label{zset_P_W_I_2-M}
	\end{empheq}
\end{proposition}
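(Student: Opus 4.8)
The plan is to split the claim according to the two disjoint cases that already appear in the definition of $f(\bm x,\bm\zeta)$, namely whether $\bm a(\bm x)\neq \bm 0$ (so that $\Ie(\bm x)=\emptyset$) or $\bm a(\bm x)=\bm 0$ (so that $\Ie(\bm x)=[I]$). Starting from the reformulation of $\zsetd$ given in Theorem~\ref{thm_exact_norm}, equations \eqref{eq_gamma_delta}--\eqref{eq_def_f}, I would observe that for a fixed $\bm x$ exactly one of these two regimes applies, and handle them separately; the union $\zsetd_1\cup\zsetd_2$ is precisely the recombination of the two regimes.

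First, for the case $\bm a(\bm x)=\bm0$: then $f(\bm x,\bm\zeta^j)=\min_{i\in[I]}\chi_{\{\bm x:b_i(\bm x)<0\}}(\bm x)$, which is either $0$ (if all $b_i(\bm x)\geq 0$) or $+\infty$ (if some $b_i(\bm x)<0$). I would argue that if some $b_i(\bm x)<0$ then the reformulation in Theorem~\ref{thm_exact_norm} is infeasible: the right-hand side of \eqref{eq_gamma_delta} would be $\min\{+\infty-\gamma,0\}=0$ for all $j$, forcing $\delta-\epsilon\gamma\leq 0$, but this can in principle be satisfied by large $\gamma$ — so actually I need to re-examine: with $f\equiv+\infty$ the constraint reads $\delta-\epsilon\gamma\le 0$, which is feasible. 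Hmm, so the genuinely relevant point is that when $\bm a(\bm x)=\bm 0$ one also needs $b_i(\bm x)\ge 0$ for all $i$ to have $\bm x\in\zsetd$ in the original sense; I would trace back to the chance constraint itself — if $\bm a(\bm x)=\bm 0$ and some $b_i(\bm x)<0$, the event $\{\bm a(\bm x)^\top\tilde\rxi_i\le b_i(\bm x)\}=\{0\le b_i(\bm x)\}=\emptyset$, so the probability is $0<1-\epsilon$, hence $\bm x\notin\zsetd$. Conversely if $\bm a(\bm x)=\bm0$ and all $b_i(\bm x)\ge0$, the event holds surely and $\bm x\in\zsetd$. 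This gives exactly $\zsetd_2$ as in \eqref{zset_P_W_I_2-M}. (One should double-check consistency with the Theorem~\ref{thm_exact_norm} reformulation by noting $\chi$ takes value $\infty$ versus $0$ appropriately; I expect this bookkeeping to be the fiddliest part.)

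Second, for the case $\bm a(\bm x)\neq\bm0$: then $\Ie(\bm x)=\emptyset$ and $f(\bm x,\bm\zeta^j)=\min_{i\in[I]}\frac{\max\{b_i(\bm x)-\bm a(\bm x)^\top\bm\zeta_i^j,0\}}{\|\bm a(\bm x)\|_*}$. I would introduce $\nu:=\|\bm a(\bm x)\|_*>0$, and for each $j$ set $z_j:=\min\{f(\bm x,\bm\zeta^j)-\gamma,\,0\}$, so that $z_j\le 0$ (giving \eqref{z_0_I_1-M}) and $z_j\le f(\bm x,\bm\zeta^j)-\gamma\le \frac{\max\{b_i(\bm x)-\bm a(\bm x)^\top\bm\zeta_i^j,0\}}{\nu}-\gamma$ for every $i$. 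Multiplying the chain through by $\nu>0$ would be the natural move, but note constraint \eqref{z_lambda_a_b_I_1-M} is stated \emph{without} the $\nu$ in the denominator — so I should be careful: the intended substitution is presumably $z_j\leftarrow \nu z_j$ or the $f$ here is the \emph{scaled} quantity; I would reconcile this by rescaling $z_j$ and $\gamma$ by $\nu$, i.e. replace $\gamma$ by $\nu\gamma$ and $z_j$ by $\nu z_j$ in \eqref{eq_gamma_delta}, which turns \eqref{eq_gamma_delta} into \eqref{z_v_I_1-M} (after also scaling $\delta$ by $\nu$) and the per-$i$ bound into \eqref{z_lambda_a_b_I_1-M}. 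The epigraph direction is immediate; for the converse I would verify that given any feasible $(\bm x,\nu,\gamma,\bm z)$ for $\zsetd_1$ one can take $\nu=\|\bm a(\bm x)\|_*$ without loss (decreasing $\nu$ to equality only relaxes \eqref{a_v_I_1-M} and, since $\nu$ appears with positive sign multiplying $\delta$ on the smaller side of \eqref{z_v_I_1-M}, one must check the direction — actually $\delta\nu$ on the left-hand side grows with $\nu$, so we want $\nu$ as small as possible, i.e. $\nu=\|\bm a(\bm x)\|_*$, which is exactly the binding case), and then set $z_j$ to the min expression to recover \eqref{eq_gamma_delta}.

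The main obstacle I anticipate is the careful handling of the reciprocal/scaling between the $\lambda$-form, the $\gamma$-form, and the $\nu$-scaled form — making sure the direction of each inequality is preserved when multiplying by the positive quantities $\nu$ and $1/\nu$, and confirming that pushing $\nu$ down to $\|\bm a(\bm x)\|_*$ is optimal in \eqref{z_v_I_1-M} rather than harmful. The degenerate boundary $\bm a(\bm x)=\bm 0$ versus $\nu>0$ is cleanly separated by the strict inequality \eqref{a_v_I_1-M_nu}, so no $\bm x$ is double-counted across $\zsetd_1$ and $\zsetd_2$; the remaining work is just to confirm every $\bm x\in\zsetd$ lands in one of the two, and vice versa, which follows by combining the two case analyses above with Theorem~\ref{thm_exact_norm}.
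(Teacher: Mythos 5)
Your overall strategy---splitting on whether $\bm a(\bm x)=\bm 0$ and passing between the $\gamma$-form of Theorem~\ref{thm_exact_norm} and the $\nu$-scaled form by multiplying through by $\|\bm a(\bm x)\|_*>0$---is essentially the paper's, and your treatment of both directions in the regime $\bm a(\bm x)\neq\bm 0$ is sound. But two issues remain. First, a bookkeeping slip: with the paper's convention $\chi_{R}(\bm x)=0$ for $\bm x\in R$ and $\infty$ otherwise, the quantity $\min_{i}\chi_{\{\bm x:\,b_i(\bm x)<0\}}(\bm x)$ equals $0$ precisely when \emph{some} $b_i(\bm x)<0$ and $+\infty$ when \emph{all} $b_i(\bm x)\ge 0$ --- the opposite of what you wrote. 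Your detour through the original chance constraint rescues the $\bm a(\bm x)=\bm 0$ case (an empty event has probability $0<1-\epsilon$, a sure event has probability $1$), but the confusion you flagged stems from this swap; with the correct reading, the reformulation \eqref{eq_gamma_delta} itself gives the contradiction $0<\delta\le(\epsilon-1)\gamma$ when some $b_i(\bm x)<0$, which is how the paper argues.

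Second, and more substantively: your claim that the strict inequality $\nu>0$ ``cleanly separates'' $Z_1$ from the regime $\bm a(\bm x)=\bm 0$ is false. The constraint \eqref{a_v_I_1-M} is $\|\bm a(\bm x)\|_*\le\nu$, which is satisfied by $\bm a(\bm x)=\bm 0$ together with any $\nu>0$, so $Z_1$ may contain such points, and for them your argument for $Z_1\subseteq Z$ --- which rests on taking $\nu=\|\bm a(\bm x)\|_*$ and dividing by it --- breaks down. This subcase must be handled explicitly: combining \eqref{z_v_I_1-M} with \eqref{z_lambda_a_b_I_1-M} for a fixed $i$ gives $\max\{b_i(\bm x),0\}\ge\delta\nu+(1-\epsilon)\gamma>0$, hence $b_i(\bm x)>0$ for every $i$, so any such point lies in $Z_2\subseteq Z$. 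Without this step the inclusion $Z_1\subseteq Z$ is not fully established.
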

\begin{proof}
We need to show that $Z_1\cup Z_2\subseteq Z$ and $Z\subseteq \zsetd_1\cup\zsetd_2$.
\begin{enumerate}
	\item[$Z_1\cup Z_2\subseteq Z$.]

Given $\bm{x}\in Z_{2}$, we have $\Ie(\bm x)=[I]$, thus $f(\bm{x},\bm{\zeta})$ (defined in \eqref{eq_def_f}) is $\infty$. Thus, let $\gamma=\frac{\delta}{\epsilon}$. Clearly, $(\gamma,\bm x)$ satisfies all the constraints in \eqref{zset_P_W_2}, i.e., $\bm{x}\in Z$. Hence, $Z_{2}\subseteq Z$.
		
Given $\bm{x}\in Z_{1}$, there exists $(\gamma,\nu,\bm z,\bm{x})$ which satisfies constraints in \eqref{zset_P_W_I_1-M}. Suppose that $\Ie(\bm x)=[I]$, then we have $\bm{a}(\bm{x})=\bm0$. Hence, for each $i\in \Ie(\bm x)$, we have \eqref{z_v_I_1-M} and \eqref{z_lambda_a_b_I_1-M} imply that
		\[\delta \nu-\epsilon\gamma \leq \frac{1}{N}\sum_{j\in [N]}z_j \leq\frac{1}{N}\sum_{j\in [N]}(\max\left\{b_i(\bm{x}),0\right\}-\gamma),\]
		which is equivalent to 
		$$\max\left\{b_i(\bm{x}),0\right\} \geq \delta \nu+(1-\epsilon)\gamma>0.$$
		That is, $b_i(\bm{x})>0$.  Thus, $\bm{x}\in Z_2\subseteq Z$.

Now we suppose that $\Ie(\bm x)=\emptyset$. For each $i\in [I]$, \eqref{z_v_I_1-M} and \eqref{z_lambda_a_b_I_1-M} along with $\nu>0$ imply that
\begin{align*}
\frac{z_j}{\nu}&\leq \min\left\{\frac{1}{\nu}\min_{i\in [I]}\max\left\{b_i(\bm{x})-\bm{a}(\bm{x})^{\top}\bm{\zeta}_i^j,0\right\}-\frac{\gamma}{\nu},0\right\}\\
&\leq \min\left\{\min_{i\in [I]}\frac{\max\left\{b_i(\bm{x})-\bm{a}(\bm{x})^{\top}\bm{\zeta}_i^j,0\right\}}{\|\bm{a}(\bm{x})\|_*}-\frac{\gamma}{\nu},0\right\}\\
&=\min\left\{f(\bm{x},\bm{\zeta}^j)-\frac{\gamma}{\nu},0\right\}
\end{align*}
where the second inequality \bl{is} due to \eqref{a_v_I_1-M}. Then according to \eqref{z_v_I_1-M}, we have
\begin{align*}
\delta -\epsilon\frac{\gamma}{\nu} \leq \frac{1}{N}\sum_{j\in [N]}\frac{z_j}{\nu}\leq \frac{1}{N}\sum_{j\in [N]}\min\left\{f(\bm{x},\bm{\zeta}^j)-\frac{\gamma}{\nu},0\right\}
\end{align*}
i.e., $(\gamma/\nu,\bm{x})$ satisfies the constraints in \eqref{zset_P_W_2}, i.e., $\bm{x}\in Z$. 
Thus, $ Z_{1}\subseteq Z$. 	

\item[$Z\subseteq \zsetd_1\cup\zsetd_2$.] Similarly, given $\bm{x}\in Z$, there exists $(\gamma,\bm{x})$ which satisfies constraints in \eqref{zset_P_W_2}. Suppose that $\bm{a}(\bm{x})=\bm0$, then we must have $b_i(\bm{x})\geq 0$ for all $i\in [I]$, otherwise, we have $f(\bm{x},\bm{\zeta}^j)=0$ for all $j\in [I]$. Then \eqref{eq_gamma_delta} is equivalent to
\[0<\delta\leq (\epsilon-1)\gamma\]
a contradiction that $\gamma\geq 0, \epsilon \in (0,1)$. Hence, we must $\bm{x}\in Z_2$.

From now on, we assume that $\bm{a}(\bm{x})\neq 0$. Let us define $\hat{\gamma}=\gamma \|\bm{a}(\bm{x})\|_*, \nu=\|\bm{a}(\bm{x})\|_*$, and $z_j=\min_{i\in [I]}(\max\{b_i(\bm{x})-\bm{a}(\bm{x})^{\top}\bm{\zeta}_i^j,0\}-\hat{\gamma},0)$ for each $j\in [N]$. Clearly, $(\hat{\gamma},\nu,\bm z,\bm{x})$ satisfies constraints in \eqref{zset_P_W_I_1-M}, i.e., $\bm{x}\in Z_1$. \qed
\end{enumerate}	
\end{proof}

We make the following remarks about the disjunctive formulation of set $Z$.
\begin{remark}
\begin{enumerate}[(i)]
\item Set $Z_2$ is trivial:
\begin{itemize}
\item For DRCCP with left-hand uncertainty (i.e., $\eta_1=1,\eta_2=0$), we have
\begin{align*}
Z_2=\left\{\bm{x}\in \Re^n: \bm{x}=0,b_i\geq 0,\forall i\in [I]\right\};
\end{align*} 
\item For DRCCP with right-hand uncertainty or two-side uncertainty (i.e., $\eta_1\in \{0,1\},\eta_2=1$), we have $Z_2=\emptyset$.
\end{itemize}
\item According to Lemma 2 \cite{Xie2016drccp}, the feasible region induced by a chance constraint is closed, so is set $Z$. However, set $Z_1$ might not be closed due to $\nu>0$ in \eqref{a_v_I_1-M_nu}. In practice, one can find a lower bound $0<\underline{\nu}$ such that 
$$\underline{\nu} \leq \inf_{\bm{x}\in Z_1}\left\{\|\bm{a}(\bm{x})\|_*: \|\bm{a}(\bm{x})\|_*\neq 0\right\};$$
or let $\underline{\nu}$ be a sufficiently small number.
Then replace the constraint $\nu>0$ in \eqref{a_v_I_1-M_nu} by $\nu\geq \underline{\nu}$.
\end{enumerate}
\end{remark}

We observe that set $Z_1$ can be formulated as a mixed integer set when it is bounded, i.e., we can use binary variables to represent the nonlinear constraints \eqref{z_lambda_a_b_I_1-M} as mixed integer linear ones. \bl{This result has been observed independently by \cite{chen2018data} (see their Proposition 1) for single DRCCP.}
\begin{theorem}\label{cor_reform_multi_drccp}
	Suppose there exists an $\bm M\in \Re_+^{N}$ such that 
	$$\max_{i\in [I]}\max_{\bm{x} \in Z_1}\left\{\big|b_i(\bm{x})-\bm{a}(\bm{x})^{\top}\bm{\zeta}_i^j\big|\right\}\leq M_{j}$$
	for all $j\in [N]$. Then $ \zsetd_1$ is mixed integer representable, i.e.,
	\begin{subequations}\label{zset_P_W_I_1+_bigM}
		\begin{empheq}[left={\zsetd_1=\empheqlbrace{\bm{x} \in \Re^n:}},right=\empheqrbrace]{align}
		& \delta \nu-\epsilon\gamma \leq \frac{1}{N}\sum_{j\in [N]}z_j,\\
		& z_j+\gamma\leq s_{j}, \forall j\in [N],\\
		&s_{j}\leq b_i(\bm{x})-\bm{a}(\bm{x})^{\top}\bm{\zeta}_i^j +M_{j}(1-y_{j}),\forall i\in [I], j\in [N],\\
		&s_{j}\leq M_{j} y_{j}, \forall j\in [N],\\
		&\|\bm{a}(\bm{x})\|_* \leq \nu,\\
		&\nu>0,\gamma\geq 0,s_{j}\geq 0,z_j\leq 0, y_{j}\in \{0,1\} ,\forall j\in [N].
		\end{empheq}
	\end{subequations}
\end{theorem}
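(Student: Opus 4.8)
The plan is to show that formulation \eqref{zset_P_W_I_1+_bigM} is exactly the set $Z_1$ from Proposition~\ref{thm_reform_joint_drccp} with the nonlinear constraint \eqref{z_lambda_a_b_I_1-M} linearized via the auxiliary variables $s_j$ and binaries $y_j$. The key observation driving everything is that for each $j\in[N]$, the quantity
\[
t_j(\bm x) := \min_{i\in[I]}\max\left\{b_i(\bm{x})-\bm{a}(\bm{x})^{\top}\bm{\zeta}_i^j,\,0\right\}
\]
is the right-hand side of \eqref{z_lambda_a_b_I_1-M}, and since $z_j\le 0$ the binding information is $z_j+\gamma\le t_j(\bm x)$. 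So I want to argue that, in the presence of the big-M bound, the constraints
\[
s_j\le b_i(\bm x)-\bm a(\bm x)^\top\bm\zeta_i^j+M_j(1-y_j)\ \forall i,\qquad s_j\le M_j y_j,\qquad s_j\ge 0,\ y_j\in\{0,1\}
\]
admit a feasible completion $(s_j,y_j)$ with $s_j\ge \tau$ for a given $\tau\ge 0$ if and only if $\tau\le t_j(\bm x)$ (and $\tau \le M_j$, which is implied by the big-M bound). Granting that, substituting $\tau = z_j+\gamma$ and noting $z_j+\gamma\ge 0$ when the remaining constraints hold, the two formulations have the same projection onto $\bm x$.

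The core lemma-style step I would carry out explicitly is the two-direction argument for each fixed $j$. For the ``if'' direction: given $\bm x$ with $t_j(\bm x)\ge\tau\ge 0$, pick $y_j=1$ if $t_j(\bm x)\ge 0$ — which is always true since the max is with $0$ — so actually take $y_j=1$ and $s_j=\min\{\tau,\,M_j,\,\min_i(b_i(\bm x)-\bm a(\bm x)^\top\bm\zeta_i^j + M_j\cdot 0)\}$; but more carefully, with $y_j=1$ the constraints read $s_j\le b_i(\bm x)-\bm a(\bm x)^\top\bm\zeta_i^j$ for all $i$ and $s_j\le M_j$, whose feasible range for $s_j\ge 0$ is $[0,\min\{M_j,\min_i(b_i-\bm a^\top\bm\zeta_i^j)\}]$; this is nonempty and contains $\tau$ precisely when $\min_i(b_i-\bm a^\top\bm\zeta_i^j)\ge 0$ — i.e. when $t_j(\bm x)=\min_i(b_i-\bm a^\top\bm\zeta_i^j)$. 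The complementary case is when some constraint $i$ has $b_i(\bm x)-\bm a(\bm x)^\top\bm\zeta_i^j<0$, forcing $t_j(\bm x)=0$, hence $\tau=0$; then take $y_j=0$, giving $s_j\le M_j y_j=0$ so $s_j=0=\tau$, and the constraints $s_j\le b_i-\bm a^\top\bm\zeta_i^j+M_j$ hold because $b_i-\bm a^\top\bm\zeta_i^j\ge -M_j$ by the definition of $M_j$ (this is the one place the big-M bound is actually used). For the ``only if'' direction: if $(s_j,y_j)$ is feasible with $s_j\ge\tau$, then either $y_j=1$, in which case $s_j\le b_i-\bm a^\top\bm\zeta_i^j$ for all $i$ and $s_j\ge 0$, so $t_j(\bm x)\ge s_j\ge\tau$; or $y_j=0$, in which case $s_j\le M_j\cdot 0=0$ together with $s_j\ge\tau\ge 0$ forces $\tau=0\le t_j(\bm x)$.

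With this equivalence in hand, the proof of the theorem is an assembly argument essentially identical in structure to the $\subseteq/\supseteq$ bookkeeping in the proof of Proposition~\ref{thm_reform_joint_drccp}. For ``$Z_1\supseteq$ (RHS of \eqref{zset_P_W_I_1+_bigM})'': from a feasible $(\gamma,\nu,\bm z,\bm s,\bm y,\bm x)$, the constraints $z_j+\gamma\le s_j$ and the ``only if'' direction applied with $\tau=z_j+\gamma$ (noting $z_j+\gamma\le s_j$ and $s_j\ge 0$ so the $\tau<0$ case is vacuous, and if $z_j+\gamma<0$ we may simply raise $z_j$) give $z_j+\gamma\le t_j(\bm x)$, i.e. \eqref{z_lambda_a_b_I_1-M} holds, and the other constraints transfer verbatim. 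For the reverse inclusion: given $(\gamma,\nu,\bm z,\bm x)$ feasible for \eqref{zset_P_W_I_1-M}, for each $j$ set $\tau_j=(z_j+\gamma)_+\le t_j(\bm x)$ and invoke the ``if'' direction to obtain $(s_j,y_j)$; keeping the same $\gamma,\nu,\bm z,\bm x$ yields feasibility of \eqref{zset_P_W_I_1+_bigM}. I expect the main obstacle to be purely a matter of care rather than depth: handling the sign of $z_j+\gamma$ and the degenerate cases cleanly (when $t_j(\bm x)=0$, when $\bm a(\bm x)=\bm 0$ — though note $Z_1$ itself still permits $\bm a(\bm x)=\bm0$ via \eqref{z_v_I_1-M}, so the linearization must not secretly exclude it), and making sure the big-M bound $M_j$ is quantified over $Z_1$ in a way that is consistent with the set being defined — the cleanest route is to observe the bound need only hold for the $\bm x$ under consideration, exactly as stated in the hypothesis.
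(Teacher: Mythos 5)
Your proposal is correct and follows essentially the same route as the paper: the paper's proof simply notes that constraints \eqref{z_lambda_a_b_I_1-M} are equivalent to $z_j+\gamma\leq \max\{\min_{i\in [I]}(b_i(\bm{x})-\bm{a}(\bm{x})^{\top}\bm{\zeta}_i^j),0\}$ and that the outer maximum can be linearized with one binary $y_j$, one continuous $s_j$, and the big-M coefficient $M_j$. You have merely spelled out the two-direction verification of that linearization (including the sign cases for $z_j+\gamma$ and where the bound $|b_i(\bm{x})-\bm{a}(\bm{x})^{\top}\bm{\zeta}_i^j|\leq M_j$ is actually used), which the paper leaves implicit.
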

\begin{proof}
We first observe that the constraints \eqref{z_lambda_a_b_I_1-M} are equivalent to
	\[z_j+\gamma\leq \max\left\{\min_{i\in [I]}b_i(\bm{x})-\bm{a}(\bm{x})^{\top}\bm{\zeta}_i^j,0\right\}, \forall j\in [N].\]
Above, the outer maximum in the right-hand side can be linearized by using a binary variable $y_j$, a continuous variable $s_j$, and big-M coefficient $M_j$ for each $j\in [N]$. By doing so, we arrive at \eqref{zset_P_W_I_1+_bigM}.
\qed
\end{proof}
\bl{Usually, we can derive the big-M coefficients by inspection; for example, suppose that $\bm{x}\in [\bm{L},\bm{U}]$, then for each $j\in [N]$, we can find $M_j$ in the following way: (i) rewrite $b_i(\bm{x})-\bm{a}(\bm{x})^{\top}\bm{\zeta}_i^j=\sum_{\tau\in [n]}(B_{i\tau}-\eta_1 \zeta_{i\tau}^j)x_\tau+b^i-\eta_2\zeta_{i(n+1)}^j$ for each $i\in [I]$, (ii) define sets $\hat{S}_+=\{\tau\in [n]: B_{i\tau}-\eta_1 \zeta_{i\tau}^j>0\}$ and $\hat{S}_-=[n]\setminus\hat{S}_+$, and (iii) let $M_j$ be
\begin{align*}
M_j:=\max_{i\in [I]}\max&\left\{\sum_{\tau\in \hat{S}_+}(B_{i\tau}-\eta_1 \zeta_{i\tau}^j)U_\tau+\sum_{\tau\in \hat{S}_-}(B_{i\tau}-\eta_1 \zeta_{i\tau}^j)L_\tau+b^i-\eta_2\zeta_{i(n+1)}^j,\right.\\
&\left.-\sum_{\tau\in \hat{S}_+}(B_{i\tau}-\eta_1 \zeta_{i\tau}^j)L_\tau-\sum_{\tau\in \hat{S}_-}(B_{i\tau}-\eta_1 \zeta_{i\tau}^j)U_\tau-b^i+\eta_2\zeta_{i(n+1)}^j\right\}.
\end{align*}
There are various methods introduced in literature \cite{qiu2014covering,song14chance} to further tighten big-M coefficients.}

Formulation \eqref{zset_P_W_I_1+_bigM} involves $N$ binary variables and big-M coefficients. In Section \ref{sec_DRMKP}, we will show that for binary DRCCP, set $Z_1$ can be reformulated as a big-M free formulation without introducing additional binary variables.

\subsection{A Special Case: DRCCP with Right-hand Uncertainty}

In this subsection, we consider DRCCP with right-hand uncertainty, i.e., $\eta_1=0,\eta_2=1,\bm{a}(\bm{x})=\e_{n+1}$. We first observe that when $\bm{a}(\bm{x})=\e_{n+1}\neq \bm{0}$, in Theorem \ref{thm_exact_norm}, set $\zsetd$ of DRCCP with right-hand uncertainty has a more compact representation.
\begin{corollary} For DRCCP with \bl{right-hand uncertainty} (i.e., $\eta_1=0,\eta_2=1,\bm{a}(\bm{x})=\e_{n+1}$), set $\zsetd$ is equivalent to the following mathematical program:
	\begin{subequations}\label{zset_P_W_rhs}
		\begin{empheq}[left={\zsetd=\empheqlbrace{\bm{x}\in \Re^n:}},right=\empheqrbrace]{align}
		& \delta\|\e_{n+1}\|_*-\epsilon\gamma \leq \frac{1}{N}\sum_{j\in [N]}z_j,\label{eq_rhs_delta_eps_max}\\
		& z_j+\gamma\leq \max\left\{ b_i(\bm{x})-\e_{n+1}^{\top}\bm{\zeta}_i^j,0\right\}, \forall j\in [N], i\in [I],\label{eq_rhs_z-x_gamma_max}\\
		&z_j\leq 0, \forall j\in [N],\gamma\geq 0.\label{eq_rhs_bnd_gamma_max}
		\end{empheq}
	\end{subequations}
\end{corollary}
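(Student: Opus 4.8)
The plan is to specialize Theorem~\ref{thm_exact_norm} to the case $\bm{a}(\bm{x})=\e_{n+1}$ and then clear the (constant) denominator $\|\e_{n+1}\|_*$ from the resulting description of $\zsetd$. First I would observe that since $\bm{a}(\bm{x})=\e_{n+1}\neq\bm{0}$ for every $\bm{x}$, the index set $\Ie(\bm{x})$ defined in Theorem~\ref{thm_exact_norm} is empty, so the characteristic-function branch in \eqref{eq_def_f} never contributes and $f$ collapses to $f(\bm{x},\bm{\zeta})=\min_{i\in[I]}\frac{\max\{b_i(\bm{x})-\e_{n+1}^{\top}\bm{\zeta}_i,0\}}{\|\e_{n+1}\|_*}$. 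Plugging this into \eqref{eq_gamma_delta} yields $\zsetd=\{\bm{x}\in\Re^n:\ \delta-\epsilon\gamma\le\frac{1}{N}\sum_{j\in[N]}\min\{\min_{i\in[I]}\frac{\max\{b_i(\bm{x})-\e_{n+1}^{\top}\bm{\zeta}_i^j,0\}}{\|\e_{n+1}\|_*}-\gamma,\,0\},\ \gamma\ge0\}$.

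The second step is to eliminate the denominator. Since $\|\e_{n+1}\|_*$ is a fixed strictly positive scalar, multiplying the displayed inequality through by $\|\e_{n+1}\|_*$ is equivalence-preserving, and the factor passes through the inner $\min\{\cdot,0\}$ and the average over $j$. Performing the change of variable $\gamma':=\|\e_{n+1}\|_*\,\gamma$, which is a bijection of the nonnegative reals onto itself, and using the identity $\|\e_{n+1}\|_*\,f(\bm{x},\bm{\zeta}^j)=\min_{i\in[I]}\max\{b_i(\bm{x})-\e_{n+1}^{\top}\bm{\zeta}_i^j,0\}$, the inequality becomes $\delta\|\e_{n+1}\|_*-\epsilon\gamma'\le\frac{1}{N}\sum_{j\in[N]}\min\{\min_{i\in[I]}\max\{b_i(\bm{x})-\e_{n+1}^{\top}\bm{\zeta}_i^j,0\}-\gamma',\,0\}$ with $\gamma'\ge0$.

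The final step is to introduce epigraph variables $z_j$. For fixed $\bm{x}$ and $\gamma'$, imposing $z_j\le\min_{i\in[I]}\max\{b_i(\bm{x})-\e_{n+1}^{\top}\bm{\zeta}_i^j,0\}-\gamma'$ is exactly the family of inequalities \eqref{eq_rhs_z-x_gamma_max} ranging over $i\in[I]$, and together with the bound $z_j\le0$ from \eqref{eq_rhs_bnd_gamma_max} the largest admissible value of $z_j$ equals $\min\{\min_{i\in[I]}\max\{b_i(\bm{x})-\e_{n+1}^{\top}\bm{\zeta}_i^j,0\}-\gamma',0\}$; since the right-hand side of \eqref{eq_rhs_delta_eps_max} is nondecreasing in each $z_j$, projecting out $\bm{z}$ is exact and recovers the inequality from the previous step, so relabeling $\gamma'$ back as $\gamma$ produces the system \eqref{zset_P_W_rhs}. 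I do not anticipate any real obstacle: the only points needing care are verifying $\Ie(\bm{x})=\emptyset$ throughout, checking that $\|\e_{n+1}\|_*>0$ so the scaling and the substitution $\gamma'=\|\e_{n+1}\|_*\gamma$ are legitimate, and the routine monotonicity argument that eliminating $\bm{z}$ loses nothing.
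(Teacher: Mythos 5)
Your proposal is correct and follows exactly the route the paper intends: the paper's own proof is the one-liner ``the result directly follows from Theorem~\ref{thm_exact_norm},'' and your argument simply fills in the details of that specialization (noting $\Ie(\bm x)=\emptyset$ since $\bm a(\bm x)=\e_{n+1}\neq\bm 0$, rescaling $\gamma$ by the positive constant $\|\e_{n+1}\|_*$, and exactly projecting out the epigraph variables $z_j$ by monotonicity). All three steps are sound, so nothing further is needed.
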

\begin{proof}
The result directly follows from Theorem \ref{thm_exact_norm}.\qed
\end{proof}
The differences between this result and the one in Proposition~\ref{thm_reform_joint_drccp} are: (i) for DRCCP with right-hand uncertainty, we do not need to reformulate set $Z$ as a disjunction of two sets, and (ii) compared to set $Z_1$ in \eqref{zset_P_W_I_1-M}, there is no need to introduce additional positive variable $\nu$ in the formulation \eqref{zset_P_W_rhs}.

Following the similar derivation in Theorem~\ref{cor_reform_multi_drccp}, we can also reformulate the set $Z$ in \eqref{zset_P_W_rhs} as a mixed integer program as below. \bl{This result has been observed independently by  \cite{chen2018data} (see their Proposition 2).}
\begin{corollary}
	For DRCCP with \bl{right-hand uncertainty} (i.e., $\eta_1=0,\eta_2=1,\bm{a}(\bm{x})=\e_{n+1}$), suppose that there exists an $\bm{M}\in \Re_+^{N}$ such that 
	$$\max_{i\in [I]}\max_{\bm{x} \in Z}\left\{|b_i(\bm{x})-\e_{n+1}^{\top}\bm{\zeta}_i^j|\right\}\leq M_{j}$$
	for all $j\in [N],i\in [I]$. Then set $ \zsetd$ is mixed integer representable, i.e.,
	\begin{subequations}\label{zset_P_W__RHS_bigM}
		\begin{empheq}[left={\zsetd=\empheqlbrace{\bm{x} \in \Re^n:}},right=\empheqrbrace]{align}
		& \delta\|\e_{n+1}\|_*-\epsilon\gamma \leq \frac{1}{N}\sum_{j\in [N]}z_j,\label{eq_rhs_delta_eps}\\
		&z_j+\gamma\leq  s_{j} , \forall j\in [N], \label{eq_rhs_z-s_gamma}\\
		&s_{j}\leq b_i(\bm{x})-\e_{n+1}^{\top}\bm{\zeta}_i^j+M_j(1-y_{j}), \bl{\forall i\in [I],}\forall j\in [N],\label{eq_rhs_s-y_gamma}\\
		&s_{j}\leq M_{j}y_{j}, \forall j\in [N], \label{eq_rhs_big_M}\\
		&\gamma\geq 0,z_j\leq 0, s_j\geq 0,y_{j}\in \{0,1\} ,\forall j\in [N].\label{eq_rhs_bnd_gamma}
		\end{empheq}
	\end{subequations}
\end{corollary}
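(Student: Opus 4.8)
The plan is to reuse the mixed-integer linearization already carried out in Theorem~\ref{cor_reform_multi_drccp} and specialize it, rather than redo the duality argument. Starting from the compact representation of $\zsetd$ in \eqref{zset_P_W_rhs}, the only constraint that is not already linear (after introducing the epigraph variable $z_j$) is \eqref{eq_rhs_z-x_gamma_max}, namely $z_j+\gamma \le \max\{b_i(\bm{x})-\e_{n+1}^{\top}\bm{\zeta}_i^j, 0\}$ for each $i\in[I]$ and $j\in[N]$. Because $z_j \le 0$ is imposed anyway, the family of constraints over $i\in[I]$ can first be collapsed to $z_j+\gamma \le \max\{\min_{i\in[I]}(b_i(\bm{x})-\e_{n+1}^{\top}\bm{\zeta}_i^j),\,0\}$, exactly as in the first line of the proof of Theorem~\ref{cor_reform_multi_drccp}.

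Next I would linearize the outer maximum on the right-hand side. Introduce, for each $j\in[N]$, a continuous variable $s_j \ge 0$ intended to equal that outer maximum, a binary variable $y_j$ selecting which of the two arguments is active, and the big-M constant $M_j$ from the hypothesis, which bounds $|b_i(\bm{x})-\e_{n+1}^{\top}\bm{\zeta}_i^j|$ over all $i\in[I]$ and all feasible $\bm{x}$. Then $s_j = \max\{\min_i(b_i(\bm{x})-\e_{n+1}^{\top}\bm{\zeta}_i^j),0\}$ is enforced in the usual way: $s_j \le b_i(\bm{x})-\e_{n+1}^{\top}\bm{\zeta}_i^j + M_j(1-y_j)$ for all $i$ (so when $y_j=1$, $s_j$ does not exceed $\min_i(b_i(\bm{x})-\e_{n+1}^{\top}\bm{\zeta}_i^j)$) and $s_j \le M_j y_j$ (so when $y_j=0$, $s_j$ is forced to $0$). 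One checks both directions: if $\min_i(b_i(\bm{x})-\e_{n+1}^{\top}\bm{\zeta}_i^j)\ge 0$, take $y_j=1$ and $s_j=\min_i(b_i(\bm{x})-\e_{n+1}^{\top}\bm{\zeta}_i^j)$, which is $\le M_j$ by the big-M bound; if $\min_i(b_i(\bm{x})-\e_{n+1}^{\top}\bm{\zeta}_i^j)< 0$, take $y_j=0$ and $s_j=0$. Conversely, for any feasible $(s_j,y_j)$ the constraints force $s_j \le \max\{\min_i(b_i(\bm{x})-\e_{n+1}^{\top}\bm{\zeta}_i^j),0\}$, and since $z_j+\gamma\le s_j$, constraint \eqref{eq_rhs_z-x_gamma_max} holds; projecting out $s_j,y_j$ recovers \eqref{zset_P_W_rhs}. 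This yields exactly the system \eqref{zset_P_W__RHS_bigM}, so the result follows along the lines of Theorem~\ref{cor_reform_multi_drccp}.

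There is essentially no hard step here: the corollary is a direct specialization of Theorem~\ref{cor_reform_multi_drccp} to the case $\bm{a}(\bm{x})=\e_{n+1}$, where the norm constraint $\|\bm{a}(\bm{x})\|_*\le\nu$ degenerates (one may simply fix $\nu=\|\e_{n+1}\|_*>0$, so the auxiliary variable $\nu$ and the strict inequality $\nu>0$ disappear and $\delta\nu$ becomes the constant $\delta\|\e_{n+1}\|_*$), and the disjunction with $Z_2$ is unnecessary since $Z_2=\emptyset$ for right-hand uncertainty as noted in the remark following Proposition~\ref{thm_reform_joint_drccp}. The only point requiring a word of care is that the big-M hypothesis here quantifies over $\bm{x}\in Z$ (not $Z_1$), which is consistent because for right-hand uncertainty $Z=Z_1$. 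Accordingly, the proof can simply read: the claim follows from the same argument as in the proof of Theorem~\ref{cor_reform_multi_drccp}, after collapsing the constraints over $i\in[I]$ into $z_j+\gamma\le\max\{\min_{i\in[I]}(b_i(\bm{x})-\e_{n+1}^{\top}\bm{\zeta}_i^j),0\}$ and linearizing the outer maximum with $y_j\in\{0,1\}$, $s_j\ge 0$, and the big-M coefficient $M_j$.
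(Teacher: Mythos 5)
Your proposal is correct and matches the paper's intent exactly: the paper omits this proof, stating only that it is ``similar to that of Theorem~\ref{cor_reform_multi_drccp},'' and your argument is precisely that specialization --- collapse the constraints over $i\in[I]$ into a single max-of-min inequality and linearize the outer maximum with $y_j$, $s_j$, and the big-M coefficient $M_j$. Your additional checks (both directions of the linearization, the degeneracy of $\nu$, and the consistency of quantifying over $Z$ rather than $Z_1$) are all sound.
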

\begin{proof}The proof is similar as that of Theorem~\ref{cor_reform_multi_drccp}, thus is omitted.
\qed
\end{proof}
\bl{Similar to Theorem~\ref{cor_reform_multi_drccp}, suppose that $\bm{x}\in [\bm{L},\bm{U}]$, then for each $j\in [N]$, one possible $M_j$ can be derived as below:
\begin{align*}
M_j:=\max_{i\in [I]}\max&\left\{\sum_{\tau\in \hat{S}_+}B_{i\tau}U_\tau+\sum_{\tau\in \hat{S}_-}B_{i\tau}L_\tau+b^i-\zeta_{i(n+1)}^j,-\sum_{\tau\in \hat{S}_+}B_{i\tau}L_\tau-\sum_{\tau\in \hat{S}_-}B_{i\tau}U_\tau-b^i+\zeta_{i(n+1)}^j\right\},
\end{align*}
where $\hat{S}_+=\{\tau\in [n]: B_{i\tau}>0\}$ and $\hat{S}_-=[n]\setminus\hat{S}_+$.}


\section{Outer and Inner Approximations}\label{sec_Approximation}

In this section, we will introduce one outer approximation and three different inner approximations by exploiting the exact reformulations in the previous section. \bl{The outer approximation can provide a lower bound for DRCCP, while inner approximations can provide good-quality feasible solutions. Our numerical study in Section \ref{sec_sep_numerical} will demonstrate that together these approximations, we can obtain better solutions than those from the exact mixed integer programming model in the previous section, in particular, for large-sized instances.}

\subsection{$\VaR$ Outer Approximation} 

Note from \cite{rockafellar2000optimization} that for any random variable $\tilde{X}$, we have
$$\CVaR_{1-\epsilon}\left(\tilde{X}\right)=\VaR_{1-\epsilon}\left(\tilde{X}\right)+\frac{1}{\epsilon}\E\left[\tilde{X}-\VaR_{1-\epsilon}\left(\tilde{X}\right)\right]_+ \geq \VaR_{1-\epsilon}\left(\tilde{X}\right).$$ Therefore, in Corollary~\ref{ref_math_thm_cor}, if we replace $\CVaR_{1-\epsilon}\left(\cdot\right)$ by $\VaR_{1-\epsilon}\left(\cdot\right)$, then we have the following outer approximation of set $\zsetd$. 
\begin{theorem}\label{cor_var}Set $\zsetd$ can be outer approximated by
	\begin{equation}
	\zsetd_{\VaR}=\left\{\bm{x}\in \Re^n:\Pr_{\bzeta}\left\{\frac{\delta}{\epsilon}\|\bm{a}(\bm{x})\|_*+\bm{a}(\bm{x})^{\top}\bzeta_i\leq b_i(\bm{x}), i\in [I]\right\}\geq 1-\epsilon\right\}. \label{zset_rhs_new2_var2}
	\end{equation}
\end{theorem}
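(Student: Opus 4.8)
The plan is to derive $\zsetd_{\VaR}$ directly from Corollary~\ref{ref_math_thm_cor} by substituting $\VaR_{1-\epsilon}$ for $\CVaR_{1-\epsilon}$ and then unwinding the definition of $\VaR$. Since $\CVaR_{1-\epsilon}(\tilde X) \ge \VaR_{1-\epsilon}(\tilde X)$ for every random variable $\tilde X$, the constraint $\frac{\delta}{\epsilon} + \VaR_{1-\epsilon}[-f(\bm x,\bzeta)] \le 0$ is implied by the constraint $\frac{\delta}{\epsilon} + \CVaR_{1-\epsilon}[-f(\bm x,\bzeta)] \le 0$, so the set it defines is an outer approximation of $\zsetd$. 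It therefore suffices to show that
\[
\left\{\bm x\in\Re^n: \frac{\delta}{\epsilon}+\VaR_{1-\epsilon}\left[-f(\bm x,\bzeta)\right]\le 0\right\} = \zsetd_{\VaR}.
\]

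First I would rewrite the $\VaR$ constraint using the definition $\VaR_{1-\epsilon}(\tilde X)=\min\{s: F_{\tilde X}(s)\ge 1-\epsilon\}$: the inequality $\frac{\delta}{\epsilon}+\VaR_{1-\epsilon}[-f(\bm x,\bzeta)]\le 0$ holds if and only if $\VaR_{1-\epsilon}[-f(\bm x,\bzeta)]\le -\frac{\delta}{\epsilon}$, which (because $\tilde X = -f(\bm x,\bzeta)$ has a discrete distribution under $\Pr_{\bzeta}$ and the cdf is right-continuous nondecreasing) is equivalent to $\Pr_{\bzeta}\{-f(\bm x,\bzeta)\le -\frac{\delta}{\epsilon}\}\ge 1-\epsilon$, i.e. $\Pr_{\bzeta}\{f(\bm x,\bzeta)\ge \frac{\delta}{\epsilon}\}\ge 1-\epsilon$. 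The second step is to translate the event $\{f(\bm x,\bzeta)\ge \frac{\delta}{\epsilon}\}$ into the linear constraints appearing in \eqref{zset_rhs_new2_var2}. Recall from \eqref{eq_def_f} that when $\bm a(\bm x)\ne\bm 0$ we have $f(\bm x,\bm\zeta)=\min_{i\in[I]}\frac{\max\{b_i(\bm x)-\bm a(\bm x)^\top\bm\zeta_i,0\}}{\|\bm a(\bm x)\|_*}$, so $f(\bm x,\bm\zeta)\ge\frac{\delta}{\epsilon}$ is equivalent to $\max\{b_i(\bm x)-\bm a(\bm x)^\top\bm\zeta_i,0\}\ge\frac{\delta}{\epsilon}\|\bm a(\bm x)\|_*$ for all $i\in[I]$; since $\delta>0$ the right-hand side is strictly positive, so the $\max$ with $0$ is inactive and this is just $\frac{\delta}{\epsilon}\|\bm a(\bm x)\|_* + \bm a(\bm x)^\top\bm\zeta_i \le b_i(\bm x)$ for all $i\in[I]$, which is precisely the event inside $\zsetd_{\VaR}$.

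It remains to handle the degenerate case $\bm a(\bm x)=\bm 0$, i.e. $\Ie(\bm x)=[I]$, where $f(\bm x,\bm\zeta)=\chi_{\{\bm x: b_i(\bm x)<0\}}(\bm x)$ equals $0$ or $+\infty$ depending on whether some $b_i(\bm x)<0$. If all $b_i(\bm x)\ge 0$, then $f\equiv+\infty\ge\frac{\delta}{\epsilon}$ almost surely and also $\frac{\delta}{\epsilon}\|\bm a(\bm x)\|_*+\bm a(\bm x)^\top\bm\zeta_i = 0 \le b_i(\bm x)$, so $\bm x$ belongs to both sets; if some $b_i(\bm x)<0$, then $f\equiv 0<\frac{\delta}{\epsilon}$ a.s. and the linear constraint $0\le b_i(\bm x)$ fails, so $\bm x$ is excluded from both. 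Hence the two sets agree on this case as well, completing the proof.

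I do not expect a serious obstacle here; the only mildly delicate point is the equivalence between $\VaR_{1-\epsilon}(\tilde X)\le t$ and $\Pr\{\tilde X\le t\}\ge 1-\epsilon$, which must be argued carefully using the right-continuity of the cdf (it is genuinely an equivalence for the discrete empirical distribution), and keeping the strictness $\delta>0$ in mind so that the $\max\{\cdot,0\}$ terms drop out cleanly. Everything else is a direct substitution and bookkeeping on the definition of $f$.
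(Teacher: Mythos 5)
Your proposal is correct and follows essentially the same route as the paper: replace $\CVaR_{1-\epsilon}$ by $\VaR_{1-\epsilon}$ via Corollary~\ref{ref_math_thm_cor}, convert the $\VaR$ inequality into the probabilistic form $\Pr_{\bzeta}\{f(\bm x,\bzeta)\geq \delta/\epsilon\}\geq 1-\epsilon$, and use $\delta/\epsilon>0$ together with the definition of $f$ to obtain the linear event in \eqref{zset_rhs_new2_var2}. Your treatment is in fact slightly more careful than the paper's, spelling out the cdf right-continuity argument for the $\VaR$ equivalence and the degenerate case $\bm a(\bm x)=\bm 0$ explicitly.
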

\begin{proof}
	According to Corollary~\ref{ref_math_thm_cor} and the well-known result in \cite{rockafellar2000optimization} that $$\CVaR_{1-\epsilon}\left[-f(\bm x,\bzeta)\right] \geq \VaR_{1-\epsilon}\left[-f(\bm x,\bzeta)\right],$$ set $\zsetd$ can be outer approximated by
	\begin{equation*}
	\zsetd_{\VaR}=\left\{\bm{x}\in \Re^n:\frac{\delta}{\epsilon}+\VaR_{1-\epsilon}\left[-f(\bm x,\bzeta)\right]\leq 0\right\}.
	\end{equation*}

Note that \[f(\bm{x},\bm{\zeta})=\min\left\{\min_{i\in [I]\setminus\Ie(\bm x)}\frac{\max\left\{b_i(\bm{x})-\bm{a}(\bm{x})^{\top}\bm{\zeta},0\right\}}{\|\bm{a}(\bm{x})\|_*}, \min_{i\in \Ie(\bm x)}\chi_{b_i(\bm{x})<0}(x)\right\},\]
and $\Ie(\bm x)=\emptyset$ if $\bm{a}(\bm{x})\neq0$, otherwise, $\Ie(\bm x)=[I]$. Thus we further have
\begin{align*}
		\zsetd_{\VaR}=\left\{\bm{x}\in \Re^n:\Pr_{\bzeta}\left\{\begin{array}{c}
		\frac{\max\left\{	b_i(\bm{x})-\bm{a}(\bm{x})^{\top}\bm{\zeta},0\right\}}{\|\bm{a}(\bm{x})\|_*}\geq \frac{\delta}{\epsilon}, \forall i\in [I]\setminus \Ie(\bm x),\\
		 \chi_{b_i(\bm{x})<0}(x)\geq \frac{\delta}{\epsilon},\forall i\in \Ie(\bm x)
		 \end{array}\right\}\geq 1-\epsilon\right\}.
	\end{align*}

Using the fact that $\frac{\delta}{\epsilon}>0$, we arrive at \eqref{zset_rhs_new2_var2}.\qed
\end{proof}

We make the following remarks about outer approximation $Z_{\VaR}$.
\begin{enumerate}[(i)]
\item In \eqref{zset_rhs_new2_var2}, we arrive at a regular chance constrained program with discrete random vector $\bzeta$, which can be reformulated as mixed integer program with big-M coefficients (cf., \cite{ahmed2014nonanticipative,luedtke2008sample});
\item A particular interpretation of formulation \eqref{zset_rhs_new2_var2} is that in order to enforce the robustness, we further penalize the left-hand side of uncertain constraints by the dual norm $\|\bm{a}(\bm{x})\|_*$; and
\item Suppose that the empirical distribution will converge to the true distribution $\Pr^{\infty}$ (cf.,  Lemma 3.7 \cite{esfahani2015data}), i.e., $\delta\rightarrow 0$ as $N\rightarrow \infty$. Then $Z_{\VaR}\rightarrow Z$ as $N\rightarrow \infty$. 
\end{enumerate}
This final remark is summarized below.
\begin{proposition}Suppose that the empirical distribution $\Pr_{\bzeta}$ will converge to the true distribution $\Pr^{\infty}$. Then with probability one, we have
$Z_{\VaR}\rightarrow Z$ as $N\rightarrow\infty$.
\end{proposition}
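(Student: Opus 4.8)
\emph{Plan.} The idea is to squeeze $\zsetd_{\VaR}$ between $\zsetd$ and a third set that is independent of the Wasserstein radius, and then argue that all three converge, with probability one, to a common limit: the nominal chance-constrained set with respect to the true distribution,
\[
\zsetd^{\infty}:=\left\{\bm{x}\in\Re^n:\Pr^{\infty}\left\{\tilde{\rxi}:\bm{a}(\bm{x})^{\top}\tilde{\rxi}_i\le b_i(\bm{x}),\ \forall i\in[I]\right\}\ge 1-\epsilon\right\}.
\]
Here I read the hypothesis ``$\Pr_{\bzeta}\to\Pr^{\infty}$'', as in Lemma~3.7 of \cite{esfahani2015data}, as $W(\Pr_{\bzeta},\Pr^{\infty})\to 0$ almost surely together with $\delta\to 0$ as $N\to\infty$, and ``with probability one'' in the statement refers to this event.

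\emph{Step 1 (sandwich).} For every fixed $N$ I would establish
\[
\zsetd\ \subseteq\ \zsetd_{\VaR}\ \subseteq\ \widetilde{\zsetd}:=\left\{\bm{x}\in\Re^n:\Pr_{\bzeta}\left\{\tilde{\rxi}:\bm{a}(\bm{x})^{\top}\tilde{\rxi}_i\le b_i(\bm{x}),\ \forall i\in[I]\right\}\ge 1-\epsilon\right\}.
\]
The first inclusion is exactly the outer-approximation property of Theorem~\ref{cor_var}. The second follows from representation \eqref{zset_rhs_new2_var2}: since $\tfrac{\delta}{\epsilon}\|\bm{a}(\bm{x})\|_*\ge 0$, the event inside $\zsetd_{\VaR}$ is a subset of the event inside $\widetilde{\zsetd}$, hence has no larger $\Pr_{\bzeta}$-probability, so $\bm x\in\zsetd_{\VaR}$ forces $\bm x\in\widetilde{\zsetd}$. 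The crucial feature is that $\widetilde{\zsetd}$ is just the sample-average chance-constrained set and carries no dependence on $\delta$.

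\emph{Step 2 (limits and squeeze).} Letting $N\to\infty$: $W(\Pr_{\bzeta},\Pr^{\infty})\to 0$ a.s.\ yields $\Pr_{\bzeta}\Rightarrow\Pr^{\infty}$ weakly a.s., so at every $\bm x$ for which $\Pr^{\infty}$ assigns zero mass to the boundary $\{\tilde\rxi:\bm{a}(\bm{x})^{\top}\tilde{\rxi}_i=b_i(\bm{x})\ \text{for some }i\}$, the empirical satisfaction probability converges to the $\Pr^{\infty}$-satisfaction probability; hence $\widetilde{\zsetd}\to\zsetd^{\infty}$ a.s. For $\zsetd$ I would use its definition \eqref{zset}: any $\Pr\in\P^{W}$ satisfies $W(\Pr,\Pr^{\infty})\le\delta+W(\Pr_{\bzeta},\Pr^{\infty})\to 0$, so $\P^{W}$ shrinks to $\{\Pr^{\infty}\}$ and, by the same continuity-on-continuity-sets argument, $\inf_{\Pr\in\P^{W}}\Pr\{\bm{a}(\bm{x})^{\top}\tilde{\rxi}_i\le b_i(\bm{x}),\forall i\}\to\Pr^{\infty}\{\bm{a}(\bm{x})^{\top}\tilde{\rxi}_i\le b_i(\bm{x}),\forall i\}$; therefore $\zsetd\to\zsetd^{\infty}$ a.s.\ as well (this is the standard reduction of the Wasserstein distributionally robust model to its sample-average and then population version as $\delta\to 0$, cf.\ \cite{esfahani2015data}). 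Sandwiched between two sequences both converging to $\zsetd^{\infty}$, the sequence $\zsetd_{\VaR}$ converges to $\zsetd^{\infty}$; since $\zsetd$ has the same limit, this is precisely $\zsetd_{\VaR}\to\zsetd$ (equivalently, the Hausdorff distance between them on any bounded region tends to $0$) with probability one.

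\emph{Main obstacle.} The only genuinely delicate point is the convergence of the level sets of the indicator-based chance constraints: weak convergence of measures transfers only to probabilities of $\Pr^{\infty}$-continuity sets, and converting ``probability $\ge 1-\epsilon$'' into membership in the limiting set at boundary points of $\zsetd^{\infty}$ requires $\Pr^{\infty}$ to put no mass on the constraint surface, together with a mild non-degeneracy (Slater-type) condition; for a fully rigorous argument I would state this regularity explicitly, whereas the heuristic reading of the remark leaves it implicit. A minor preliminary is to fix the precise notion of set convergence used (e.g.\ Hausdorff distance on a compact superset of the region of interest, or Painlev\'e--Kuratowski convergence), after which the squeeze in Step~2 delivers the claim.
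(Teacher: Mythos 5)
The paper does not actually supply a proof of this proposition: it is stated as a one-line summary of the preceding informal remark, whose entire content is that $\delta\rightarrow 0$ as $N\rightarrow\infty$. Your proposal therefore goes beyond the paper rather than diverging from it, and the route you choose is sound. The sandwich $\zsetd\subseteq \zsetd_{\VaR}\subseteq\widetilde{\zsetd}$ is correct: the first inclusion is Theorem~\ref{cor_var}, and the second follows because the penalty $\frac{\delta}{\epsilon}\|\bm{a}(\bm{x})\|_*$ is nonnegative, so the penalized event is contained in the unpenalized one and can only have smaller $\Pr_{\bzeta}$-probability. Reducing the claim to the convergence of the two outer sets to the nominal set $\zsetd^{\infty}$ is the right way to make the paper's assertion precise, and it buys something the paper's remark does not: an explicit identification of where the analytic work lies (Portmanteau-type convergence of probabilities, which holds only on $\Pr^{\infty}$-continuity sets, plus a non-degeneracy condition ruling out points $\bm{x}$ whose satisfaction probability under $\Pr^{\infty}$ equals $1-\epsilon$ exactly). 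You are right that these regularity hypotheses cannot be dispensed with --- without them neither $\widetilde{\zsetd}\rightarrow\zsetd^{\infty}$ nor $\zsetd\rightarrow\zsetd^{\infty}$ holds at boundary points, and a precise notion of set convergence must be fixed for the squeeze to be meaningful --- so your proof, like the paper's claim, is complete only up to those implicit assumptions; since you flag them explicitly rather than leaving them silent, your write-up is the more careful of the two.
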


{Recently, there are several works \cite{bertsimas2018data,bertsimas2019twostage,gao2017distributionally,xie2019tractable} on distributionally robust optimization with $\infty-$Wasserstein ambiguity set, and set $Z_{\VaR}$ is in fact equal to the feasible region induced by DRCC with $\infty-$Wasserstein ambiguity set.
\begin{proposition}Consider $\infty-$Wasserstein ambiguity set $\P^{W}$ defined as
	\begin{align}\label{eq_general_das1}
\P_\infty^{W}=\left\{\Pr:\Pr\left\{\tilde\rxi\in \Xi\right\}=1,W_{
\infty}\left(\Pr,\Pr_{\bzeta}\right)\leq \frac{\delta}{\epsilon}\right\},
\end{align}
where $\infty-$Wasserstein distance is defined as
\[W_\infty\left(\Pr_1,\Pr_{2}\right)=\inf_{\Qe}\left\{\text{ess.sup}\|{\bm{\xi}}_1-{\bm{\xi}}_2\|:\begin{array}{l}\text{$\Qe$ is a joint distribution of $\hat{\bm{\xi}}_1$ and $\hat{\bm{\xi}}_2$}\\
\text{with marginals $\Pr_1$ and $\Pr_2$, respectively}\end{array}\right\}.\]
Then set $Z_{\VaR}$ is equivalent to
\[Z_{\VaR}:=\left\{\bm{x}:\inf_{\Pr\in \P^W_\infty}\Pr\left\{\tilde{\rxi}:\bm{a}(\bm{x})^{\top}\tilde{\rxi}_i\leq b_i(\bm{x}), \forall i\in [I]\right\} \ge 1-\epsilon\right\}.\]
\end{proposition}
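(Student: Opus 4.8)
The plan is to make the $\infty$-Wasserstein ambiguity set $\P^W_\infty$ concrete and then read off the constraint atom by atom. First I would negate the chance constraint: a point $\bm{x}$ belongs to $\{\bm{x}:\inf_{\Pr\in\P^W_\infty}\Pr\{\bm{a}(\bm{x})^\top\tilde{\rxi}_i\le b_i(\bm{x}),\forall i\in[I]\}\ge 1-\epsilon\}$ if and only if $\sup_{\Pr\in\P^W_\infty}\Pr\{\tilde{\rxi}\in V(\bm{x})\}\le\epsilon$, where $V(\bm{x}):=\bigcup_{i\in[I]}\{\rxi:\bm{a}(\bm{x})^\top\rxi_i>b_i(\bm{x})\}$ is the violation region (the same set that sits inside the indicator function in the proof of Theorem~\ref{thm_exact_norm}). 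Next, because $\Pr_{\bzeta}$ is supported on the finite set $\{\bm{\zeta}^j\}_{j\in[N]}$, a routine tightness/weak-convergence argument shows the infimum defining $W_\infty$ is attained; consequently $\Pr\in\P^W_\infty$ if and only if there is a coupling $\Qe$ of $\Pr_{\bzeta}$ and $\Pr$ with $\|{\bm{\xi}}_1-{\bm{\xi}}_2\|\le\delta/\epsilon$ $\Qe$-almost surely. Equivalently, $\P^W_\infty$ consists exactly of the distributions obtained by redistributing the mass $1/N$ at each $\bm{\zeta}^j$ over the closed ball $\bar B(\bm{\zeta}^j,\delta/\epsilon)$.

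Then I would determine, sample by sample, when an atom can be transported into $V(\bm{x})$. Fix $\bm{x}$ with $\bm{a}(\bm{x})\ne\bm0$. By the distance identity already established in the proof of Theorem~\ref{thm_exact_norm}, $\mathrm{dist}\big(\bm{\zeta}^j,\{\rxi:\bm{a}(\bm{x})^\top\rxi_i>b_i(\bm{x})\}\big)=\big(b_i(\bm{x})-\bm{a}(\bm{x})^\top\bm{\zeta}_i^j\big)_+/\|\bm{a}(\bm{x})\|_*$, and since each of these half-spaces is open, $\bar B(\bm{\zeta}^j,\delta/\epsilon)\cap V(\bm{x})\ne\emptyset$ if and only if $\min_{i\in[I]}\big(b_i(\bm{x})-\bm{a}(\bm{x})^\top\bm{\zeta}_i^j\big)_+/\|\bm{a}(\bm{x})\|_*<\delta/\epsilon$. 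Since $\frac{\delta}{\epsilon}\|\bm{a}(\bm{x})\|_*>0$, this rearranges to ``$\exists i\in[I]:\frac{\delta}{\epsilon}\|\bm{a}(\bm{x})\|_*+\bm{a}(\bm{x})^\top\bm{\zeta}_i^j>b_i(\bm{x})$'', i.e.\ to the negation of the event ``$\frac{\delta}{\epsilon}\|\bm{a}(\bm{x})\|_*+\bm{a}(\bm{x})^\top\bm{\zeta}_i^j\le b_i(\bm{x})$ for all $i\in[I]$'' that appears in \eqref{zset_rhs_new2_var2}.

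Finally I would combine these two facts to evaluate the supremum. For any $\Pr\in\P^W_\infty$ the mass that $\Pr$ places in $V(\bm{x})$ can only originate from atoms whose ball meets $V(\bm{x})$, so $\Pr\{\tilde{\rxi}\in V(\bm{x})\}\le\Pr_{\bzeta}\{\bar B(\bzeta,\delta/\epsilon)\cap V(\bm{x})\ne\emptyset\}$; conversely this bound is attained by the distribution that moves each such atom onto a violating point inside its ball and keeps every other atom in place. Hence $\sup_{\Pr\in\P^W_\infty}\Pr\{\tilde{\rxi}\in V(\bm{x})\}=1-\Pr_{\bzeta}\{\frac{\delta}{\epsilon}\|\bm{a}(\bm{x})\|_*+\bm{a}(\bm{x})^\top\bzeta_i\le b_i(\bm{x}),\ \forall i\in[I]\}$, which is $\le\epsilon$ exactly when the probability on the right is $\ge 1-\epsilon$; the degenerate case $\bm{a}(\bm{x})=\bm0$ collapses both descriptions to the deterministic requirement $b_i(\bm{x})\ge0$ for all $i\in[I]$. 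This is precisely the definition \eqref{zset_rhs_new2_var2} of $Z_{\VaR}$, so the two sets coincide.

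The main obstacle is the first step: rigorously identifying $\P^W_\infty$ with the set of ``closed‑ball transports'' of $\Pr_{\bzeta}$ (attainment of the infimum in $W_\infty$ when the first marginal is finitely supported), and keeping straight that the violation half-spaces are open while the transport balls are closed — this open/closed bookkeeping is what decides that a sample lying at distance exactly $\delta/\epsilon$ from $V(\bm{x})$ does \emph{not} contribute to the worst-case violation probability. Once that is pinned down, the remaining steps are the same algebra that underlies Theorem~\ref{thm_exact_norm}.
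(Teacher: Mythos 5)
Your proof is correct and arrives at the statement by essentially the same route as the paper: both reduce the worst-case probability over $\P^W_\infty$ to a per-sample worst case over the closed balls $\bar B(\bm{\zeta}^j,\delta/\epsilon)$ and then observe that the ball meets the (open) violation region if and only if the constraint in \eqref{zset_rhs_new2_var2} fails. The only real difference is where the key decomposition comes from: the paper simply invokes Theorem 5 of \cite{bertsimas2018data} to get $\sup_{\Pr\in\P^W_\infty}\E_{\Pr}[g]=\frac1N\sum_j\sup_{\|\bm{\xi}-\bzeta^j\|\le\delta/\epsilon}g(\bm{\xi})$, whereas you derive it from scratch by characterizing $\P^W_\infty$ as the closed-ball transports of $\Pr_{\bzeta}$. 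That buys self-containedness at the cost of the attainment issue you flag, which does go through: the set of couplings with fixed marginals is weakly compact, and $\{\Qe:\Qe(\|\bm{\xi}_1-\bm{\xi}_2\|\le r+\rho)=1\}$ is weakly closed by the Portmanteau theorem, so the nested intersection over $\rho>0$ is nonempty and any coupling in it has essential supremum at most $r$. (Alternatively, one can avoid attainment entirely by bounding $\Pr(V(\bm{x}))\le\frac1N|\{j:d(\bm{\zeta}^j,V(\bm{x}))<r+\rho\}|$ for each $\rho$, letting $\rho\downarrow 0$, and noting that the mass attributable to an atom with $d(\bm{\zeta}^j,V(\bm{x}))\ge r$ is at most $\Pr\bigl(V(\bm{x})\cap\bar B(\bm{\zeta}^j,r+\rho)\bigr)\to\Pr\bigl(V(\bm{x})\cap\bar B(\bm{\zeta}^j,r)\bigr)=0$.) Your open-ball/closed-ball bookkeeping and the degenerate case $\bm{a}(\bm{x})=\bm{0}$ are handled correctly, and the final identification with \eqref{zset_rhs_new2_var2} matches the paper's conclusion.
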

\begin{proof}
Let us denote set $\hat{Z}:=\left\{\bm{x}:\inf_{\Pr\in \P^W_\infty}\Pr\left\{\tilde{\rxi}:\bm{a}(\bm{x})^{\top}\tilde{\rxi}_i\leq b_i(\bm{x}), \forall i\in [I]\right\} \ge 1-\epsilon\right\}$. Let $1$ minus both sides of the inequalities, and we have
Note that
\begin{align*}
\hat{Z}
=&\left\{\bm{x}:\sup_{\Pr\in \P^W_\infty}\E_{\Pr}\left[\I\left\{\bm{a}(\bm{x})^{\top}\tilde{\rxi}_i>b_i(\bm{x}), \textrm{ for some }i\in [I]\right\} \right]\leq \epsilon \right\}.
\end{align*}
According to Theorem 5 in \cite{bertsimas2018data}, $\sup_{\Pr\in \P^W_\infty}\E_{\Pr}\left[\I\left\{\tilde{\rxi}:\bm{a}(\bm{x})^{\top}\tilde{\rxi}_i>b_i(\bm{x}), \textrm{ for some }i\in [I]\right\} \right]$ is equivalent to
	\begin{align*}
	&\frac{1}{N}\sum_{j\in [N]}\sup_{\bm{\xi}:\|\bm{\xi}-\bzeta^j\|\leq \frac{\delta}{\epsilon}}\I\left\{\bm{a}(\bm{x})^{\top}{\rxi}_i>b_i(\bm{x}), \textrm{ for some }i\in [I]\right\}.
\end{align*}
Using the fact that $\I\left\{{\rxi}:\bm{a}(\bm{x})^{\top}{\rxi}_i\leq b_i(\bm{x}), \forall \in [I]\right\}+\I\left\{{\rxi}:\bm{a}(\bm{x})^{\top}{\rxi}_i>b_i(\bm{x}), \textrm{ for some }i\in [I]\right\}=1$, set $\hat{Z}$ is equivalent to
\begin{align*}
\hat{Z}
=&\left\{\bm{x}:\frac{1}{N}\sum_{j\in [N]}\inf_{\bm{\xi}:\|\bm{\xi}-\bzeta^j\|\leq \delta}\I\left\{\bm{a}(\bm{x})^{\top}{\rxi}_i\leq b_i(\bm{x}), \forall \in [I]\right\}\geq 1-\epsilon \right\}.
\end{align*}
Clearly,
\begin{align*}
\inf_{\bm{\xi}:\|\bm{\xi}-\bzeta^j\|\leq \frac{\delta}{\epsilon}}\I\left\{{\rxi}:\bm{a}(\bm{x})^{\top}{\rxi}_i\leq b_i(\bm{x}), \forall \in [I]\right\}
=\begin{cases}
1,&\textrm{ if } \frac{\delta}{\epsilon}\|\bm{a}(\bm{x})\|_*+\bm{a}(\bm{x})^{\top}{\bzeta}_i^j\leq b_i(\bm{x}),\forall \in [I]\\
0,&\textrm{ otherwise }.
\end{cases}, \forall j\in [N].
\end{align*}
Thus, 
\begin{align*}
\hat{Z}
=&\left\{\bm{x}:\frac{1}{N}\sum_{j\in [N]}\I\left\{\frac{\delta}{\epsilon}\|\bm{a}(\bm{x})\|_*+\bm{a}(\bm{x})^{\top}{\bzeta}_i^j\leq b_i(\bm{x}), \forall \in [I]\right\}\geq 1-\epsilon \right\}.\qed
\end{align*}

\end{proof}
This result demonstrates that set $Z_{\VaR}$ indeed can be viewed as a deterministic counterpart of DRCCP with $\infty-$Wasserstein ambiguity set. Thus, in practice, it can serve as an alternative for the set $Z$.
}

For the completeness of this paper, we present the mixed integer program formulation of outer approximation set $Z_{\VaR}$. The proof is omitted as it directly follows the proof of Theorem~\ref{cor_reform_multi_drccp}.
\begin{corollary}\label{cor_reform_multi_drccp_var}
	Suppose that there exists an $\bm M\in \Re^{N}$ such that 
	$$\max_{i\in [I]}\max_{\bm{x} \in Z_{\VaR}}\left\{\bm{a}(\bm{x})^{\top}\bm{\zeta}_i^j+\frac{\delta}{\epsilon}\|\bm{a}(\bm{x})\|_*-b_i(\bm{x})\right\}\leq M_{j}$$
	for all $j\in [N]$. Then $ \zsetd_{\VaR}$ is mixed integer representable, i.e.,
	\begin{subequations}\label{zset_P_W_I_1+_bigM_var}
		\begin{empheq}[left={\zsetd_{\VaR}=\empheqlbrace{\bm{x} \in \Re^n:}},right=\empheqrbrace]{align}
		& \frac{1}{N}\sum_{j\in [N]}y_j\geq 1-\epsilon,\\
		&\frac{\delta}{\epsilon}\|\bm{a}(\bm{x})\|_*\leq b_i(\bm{x})-\bm{a}(\bm{x})^{\top}\bm{\zeta}_i^j +M_{j}(1-y_{j}),\forall i\in [I], j\in [N],\\
		& y_{j}\in \{0,1\} ,\forall j\in [N].
		\end{empheq}
	\end{subequations}
\end{corollary}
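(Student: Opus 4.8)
The plan is to reduce Corollary~\ref{cor_reform_multi_drccp_var} to the linearization argument already used in the proof of Theorem~\ref{cor_reform_multi_drccp}, starting from the explicit description of $Z_{\VaR}$ supplied by Theorem~\ref{cor_var}. First I would observe that, because $\Pr_{\bzeta}$ assigns mass $1/N$ to each of the $N$ empirical points $\bm\zeta^j$, the chance constraint $\Pr_{\bzeta}\{\frac{\delta}{\epsilon}\|\bm a(\bm x)\|_*+\bm a(\bm x)^\top\bzeta_i\le b_i(\bm x),\ i\in[I]\}\ge 1-\epsilon$ defining $Z_{\VaR}$ is the same as $\frac1N\sum_{j\in[N]}\I\{\frac{\delta}{\epsilon}\|\bm a(\bm x)\|_*+\bm a(\bm x)^\top\bm\zeta_i^j\le b_i(\bm x),\ \forall i\in[I]\}\ge 1-\epsilon$. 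Then I introduce, for each $j\in[N]$, a binary variable $y_j$ intended to indicate that all $I$ inequalities of scenario $j$ are satisfied, replace the indicator in the counting inequality by $y_j$, and enforce the one-sided implication ``$y_j=1\Rightarrow \frac{\delta}{\epsilon}\|\bm a(\bm x)\|_*\le b_i(\bm x)-\bm a(\bm x)^\top\bm\zeta_i^j$ for all $i$'' through the big-$M$ inequalities $\frac{\delta}{\epsilon}\|\bm a(\bm x)\|_*\le b_i(\bm x)-\bm a(\bm x)^\top\bm\zeta_i^j+M_j(1-y_j)$, which is exactly formulation \eqref{zset_P_W_I_1+_bigM_var}.

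The equivalence of the two descriptions would then be proved by the usual pair of inclusions. For $Z_{\VaR}\subseteq$ the set in \eqref{zset_P_W_I_1+_bigM_var}: given $\bm x\in Z_{\VaR}$, set $y_j$ equal to the value of the scenario-$j$ indicator; by Theorem~\ref{cor_var} these $y_j$ satisfy $\frac1N\sum_j y_j\ge 1-\epsilon$; when $y_j=1$ the big-$M$ inequalities hold because the slack term vanishes and the scenario inequalities hold by construction; when $y_j=0$ they reduce to $M_j\ge \bm a(\bm x)^\top\bm\zeta_i^j+\frac{\delta}{\epsilon}\|\bm a(\bm x)\|_*-b_i(\bm x)$, which is precisely the hypothesis on $\bm M$ evaluated at $\bm x\in Z_{\VaR}$. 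For the reverse inclusion: given $(\bm x,\bm y)$ feasible to \eqref{zset_P_W_I_1+_bigM_var}, every index $j$ with $y_j=1$ forces $\frac{\delta}{\epsilon}\|\bm a(\bm x)\|_*+\bm a(\bm x)^\top\bm\zeta_i^j\le b_i(\bm x)$ for all $i\in[I]$ (again the $M_j(1-y_j)$ term is zero), hence the empirical probability of the event is at least $\frac1N\sum_j y_j\ge 1-\epsilon$ and $\bm x\in Z_{\VaR}$ by Theorem~\ref{cor_var}.

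I expect the only genuinely delicate point to be the circularity in the hypothesis: the coefficients $M_j$ are required to dominate $\bm a(\bm x)^\top\bm\zeta_i^j+\frac{\delta}{\epsilon}\|\bm a(\bm x)\|_*-b_i(\bm x)$ over $\bm x\in Z_{\VaR}$, the very set we are trying to represent. As the argument above shows this is harmless, because the bound is invoked only for points $\bm x$ already known to lie in $Z_{\VaR}$, so both inclusions still go through; in practice one simply replaces $Z_{\VaR}$ by any bounded superset (e.g.\ a box $\bm x\in[\bm L,\bm U]$) and maximizes the affine-plus-dual-norm expression coordinatewise to obtain valid $M_j$, exactly as in the displays following Theorem~\ref{cor_reform_multi_drccp}. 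Everything else is routine: $\|\bm a(\bm x)\|_*$ is kept intact as a norm of an affine function of $\bm x$, so all added constraints are convex in $\bm x$ and linear in the binary variables, and \eqref{zset_P_W_I_1+_bigM_var} is therefore a bona fide mixed-integer (conic-representable) description of $Z_{\VaR}$.
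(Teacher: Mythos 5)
Your proof is correct and follows exactly the route the paper intends: the paper omits the proof of this corollary, noting only that it ``directly follows the proof of Theorem~\ref{cor_reform_multi_drccp}'', and your argument is precisely that standard big-$M$ linearization of the empirical chance constraint from Theorem~\ref{cor_var}, with both inclusions verified and the (harmless) circularity in the definition of $M_j$ correctly dispatched. Nothing further is needed.
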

\bl{Similar to Theorem~\ref{cor_reform_multi_drccp}, suppose that $\bm{x}\in [\bm{L},\bm{U}]$, then for each $j\in [N]$, one possible $M_j$ can be derived as below:
\begin{align*}
M_j:=\max_{i\in [I]}&\left\{\sum_{\tau\in \hat{S}_+}(B_{i\tau}-\eta_1 \zeta_{i\tau}^j)U_\tau+\sum_{\tau\in \hat{S}_-}(B_{i\tau}-\eta_1 \zeta_{i\tau}^j)L_\tau+b^i-\eta_2\zeta_{i(n+1)}^j\right.\\
&\left.+\frac{\delta}{\epsilon}\max\left\{\left\|\begin{pmatrix}
\eta_1 \bm{L}\\
\eta_2
\end{pmatrix}\right\|_*,\left\|\begin{pmatrix}
\eta_1 \bm{U}\\
\eta_2
\end{pmatrix}\right\|_*\right\}\right\}.
\end{align*}
where $\hat{S}_+=\{\tau\in [n]: B_{i\tau}-\eta_1 \zeta_{i\tau}^j>0\}$ and $\hat{S}_-=[n]\setminus\hat{S}_+$.}

\subsection{Inner Approximation I- Robust Scenario Approximation} 

We also observe that 
for any random variable $\tilde{X}$, we have
$$\CVaR_{1-\epsilon}\left(\tilde{X}\right)\leq \CVaR_{1}\left(\tilde{X}\right):=\mathrm{ess.}\sup(\tilde{X}).$$
Thus, in Corollary~\ref{ref_math_thm_cor}, if we replace $\CVaR_{1-\epsilon}\left(\cdot\right)$ by $\mathrm{ess.}\sup(\cdot)$, then we have the following inner approximation of set $\zsetd$. 
\begin{theorem}\label{cor_var_SA}Set $\zsetd$ can be inner approximated by
	\begin{empheq}[left={Z_R=\empheqlbrace{\bm{x}\in \Re^n:}},right=\empheqrbrace.]{align}
	& \frac{\delta}{\epsilon}\|\bm{a}(\bm{x})\|_*+\bm{a}(\bm{x})^{\top}\bm{\zeta}^j\leq b_i(\bm{x}), \forall j\in [N], i\in [I]\label{zset_P_W_inner}
	\end{empheq}
\end{theorem}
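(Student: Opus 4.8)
The plan is to read $Z_R$ off directly from the $\CVaR$ reformulation of Corollary~\ref{ref_math_thm_cor} by replacing the $(1-\epsilon)$-$\CVaR$ with the essential supremum (the $100\%$-$\CVaR$), and then to verify two things: that this substitution produces a valid \emph{inner} approximation, and that the resulting set is exactly the one described in \eqref{zset_P_W_inner}.

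First I would invoke Corollary~\ref{ref_math_thm_cor}, which gives $\zsetd=\{\bm x\in\Re^n:\tfrac{\delta}{\epsilon}+\CVaR_{1-\epsilon}[-f(\bm x,\bzeta)]\le 0\}$, and then use the elementary bound $\CVaR_{1-\epsilon}(\tilde X)\le \CVaR_{1}(\tilde X)=\mathrm{ess.}\sup(\tilde X)$ from \cite{rockafellar2000optimization}. This immediately yields $\widehat Z\subseteq \zsetd$, where $\widehat Z:=\{\bm x\in\Re^n:\tfrac{\delta}{\epsilon}+\mathrm{ess.}\sup[-f(\bm x,\bzeta)]\le 0\}$, so $\widehat Z$ is an inner approximation of $\zsetd$; it then only remains to identify $\widehat Z$ with $Z_R$.

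Next I would evaluate the essential supremum under the empirical distribution $\Pr_{\bzeta}$. Since $\Pr_{\bzeta}$ places mass $1/N$ on each of the atoms $\bm\zeta^1,\dots,\bm\zeta^N$, we have $\mathrm{ess.}\sup[-f(\bm x,\bzeta)]=\max_{j\in[N]}\bigl(-f(\bm x,\bm\zeta^j)\bigr)=-\min_{j\in[N]}f(\bm x,\bm\zeta^j)$, so $\widehat Z=\{\bm x\in\Re^n: f(\bm x,\bm\zeta^j)\ge \delta/\epsilon,\ \forall j\in[N]\}$. Finally I would unfold the definition \eqref{eq_def_f} of $f$ through a two-case split. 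If $\bm a(\bm x)\ne\bm 0$, then $\Ie(\bm x)=\emptyset$ and the requirement $f(\bm x,\bm\zeta^j)\ge\delta/\epsilon$ reads $\max\{b_i(\bm x)-\bm a(\bm x)^\top\bm\zeta_i^j,0\}\ge \tfrac{\delta}{\epsilon}\|\bm a(\bm x)\|_*$ for every $i\in[I]$; because $\tfrac{\delta}{\epsilon}\|\bm a(\bm x)\|_*>0$, the outer maximum can be dropped, giving precisely the constraint in \eqref{zset_P_W_inner}. If $\bm a(\bm x)=\bm 0$, then $\Ie(\bm x)=[I]$, $f(\bm x,\bm\zeta^j)$ equals $+\infty$ when $b_i(\bm x)\ge0$ for all $i$ and $0$ otherwise, so $f(\bm x,\bm\zeta^j)\ge\delta/\epsilon>0$ collapses to $b_i(\bm x)\ge0$ for all $i\in[I]$, which is exactly what \eqref{zset_P_W_inner} becomes when $\bm a(\bm x)=\bm 0$. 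Hence $\widehat Z=Z_R\subseteq\zsetd$.

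I expect almost everything here to be a routine unrolling of definitions; the one place that needs a little care is the degenerate branch $\bm a(\bm x)=\bm 0$, where $f$ is $\{0,+\infty\}$-valued through the characteristic function and one must confirm that the constraint \eqref{zset_P_W_inner} specializes consistently (so that the convex piece $\bm a(\bm x)=\bm 0$, $b_i(\bm x)\ge0$ is recovered), together with bookkeeping on the per-sample index so the $i$-dependence in $f$ lines up with \eqref{zset_P_W_inner}.
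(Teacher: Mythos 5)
Your proposal is correct and follows essentially the same route as the paper: both start from the $\CVaR$ reformulation of Corollary~\ref{ref_math_thm_cor}, bound $\CVaR_{1-\epsilon}$ by the essential supremum, use discreteness of $\Pr_{\bzeta}$ to reduce to $f(\bm x,\bm\zeta^j)\ge\delta/\epsilon$ for all $j$, and then unfold the definition of $f$ using $\delta/\epsilon>0$. Your explicit two-case treatment of $\bm a(\bm x)=\bm 0$ versus $\bm a(\bm x)\ne\bm 0$ merely spells out the step the paper leaves as ``using the definition of $f$,'' and it is carried out correctly.
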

\begin{proof}
	Since $\CVaR_{1-\epsilon}\left[-f(\bm x,\bzeta)\right] \leq \mathrm{ess.}\sup\left[-f(\bm x,\bzeta)\right]$, and $\bzeta$ is a discrete random vector, therefore, set $\zsetd$ can be inner approximated by
	\begin{equation*}
	Z_R=\left\{\bm{x}\in \Re^n:\Pr_{\tilde{\bm{\zeta}}}\left\{f(\bm x,\bzeta)\geq \frac{\delta}{\epsilon}\right\}=1\right\}.
	\end{equation*}
Using the definition of $f(\bm x,\bm\zeta)$ and the fact that $\frac{\delta}{\epsilon}>0$, we arrive at \eqref{zset_P_W_inner}. \qed
\end{proof}
We remark that set $Z_R$ in \eqref{zset_P_W_inner} is very similar to scenario approach to regular chance constrained program \cite{calafiore2006scenario,campi2009scenario,nemirovski2006scenario}. That is, we generate $N$ i.i.d. samples $\{\bm{\zeta}^j\}_{j\in [N]}$ and enforce all the sampled constraints to hold. It has been shown in \cite{calafiore2006scenario,campi2009scenario,nemirovski2006scenario} that if $N$ is larger than a threshold, it guarantees with high probability that the solution of scenario approach is feasible to the regular chance constrained program.
Different from scenario approach, in formulation \eqref{zset_P_W_inner}, we add a penalty $\frac{\delta}{\epsilon}\|\bm{a}(\bm{x})\|_*$ to the sampled constraints, which can be viewed as a ``robust" scenario approach to the regular chance constrained problem. That is, if the sample size $N$ is not sufficiently large (i.e., $N$ is smaller than the threshold given by \cite{calafiore2006scenario,campi2009scenario,nemirovski2006scenario}), one might want to add a penalty $\frac{\delta}{\epsilon}\|\bm{a}(\bm{x})\|_*$ to enforce that set $Z_R$ is indeed a subset of the feasible region induced by a regular chance constraint.
\\

\subsection{Inner Approximation II- An Inner Chance Constrained  Programming Approximation} 
Next we propose an inner chance constrained programming approximation of set $Z$ by constructing a feasible $\gamma$ in \eqref{zset_P_W_2}.
\begin{theorem}\label{ref_math_thm_inner_2} Set $\zsetd$ is inner approximated by
	\begin{equation}
	\zsetd_{I}=\left\{\bm{x}\in \Re^n:\Pr_{\bzeta}\left\{\frac{\delta}{\epsilon-\alpha}\|\bm{a}(\bm{x})\|_*+\bm{a}(\bm{x})^{\top}\bzeta_i\leq b_i(\bm{x}), i\in [I]\right\}\geq 1-\alpha, 0\leq \alpha<\epsilon\right\}.\label{zset_inner_2_nc}
	\end{equation}
\end{theorem}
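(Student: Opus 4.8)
The plan is to show $Z_I \subseteq Z$ by exhibiting, for each $\bm{x} \in Z_I$, a feasible choice of $\gamma \geq 0$ satisfying the defining inequality \eqref{eq_gamma_delta} of $Z$ in Theorem~\ref{thm_exact_norm}. First I would dispose of the degenerate case $\bm{a}(\bm{x}) = \bm 0$: here the constraint inside $Z_I$ forces $0 \leq b_i(\bm{x})$ for all $i$ with probability at least $1-\alpha > 0$, hence (the event being deterministic) $b_i(\bm{x}) \geq 0$ for all $i \in [I]$, so $\bm{x} \in Z_2 \subseteq Z$ by Proposition~\ref{thm_reform_joint_drccp}. So assume $\bm{a}(\bm{x}) \neq \bm 0$, whence $\Ie(\bm x) = \emptyset$ and $f(\bm{x},\bm{\zeta}^j) = \min_{i\in[I]} \max\{b_i(\bm{x}) - \bm{a}(\bm{x})^\top\bm{\zeta}_i^j, 0\}/\|\bm{a}(\bm{x})\|_*$.

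The key idea is the following dichotomy on the $N$ scenarios. Call index $j$ \emph{good} if $\tfrac{\delta}{\epsilon-\alpha}\|\bm{a}(\bm{x})\|_* + \bm{a}(\bm{x})^\top\bm{\zeta}_i^j \leq b_i(\bm{x})$ for all $i \in [I]$; the membership $\bm{x}\in Z_I$ says at least $\lceil(1-\alpha)N\rceil$ scenarios are good. For a good $j$ we have $b_i(\bm{x}) - \bm{a}(\bm{x})^\top\bm{\zeta}_i^j \geq \tfrac{\delta}{\epsilon-\alpha}\|\bm{a}(\bm{x})\|_* > 0$ for every $i$, so $f(\bm{x},\bm{\zeta}^j) \geq \tfrac{\delta}{\epsilon-\alpha}$. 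I would then set $\gamma := \tfrac{\delta}{\epsilon-\alpha}$, which is $\geq 0$, and verify \eqref{eq_gamma_delta}: for good $j$, $\min\{f(\bm{x},\bm{\zeta}^j)-\gamma,0\} = 0$, and for the at most $\alpha N$ bad $j$ we use the trivial bound $\min\{f(\bm{x},\bm{\zeta}^j)-\gamma,0\} \geq -\gamma$. Hence the right-hand side of \eqref{eq_gamma_delta} is at least $\tfrac{1}{N}(\text{(number of bad }j) \cdot (-\gamma)) \geq -\alpha\gamma = -\tfrac{\alpha\delta}{\epsilon-\alpha}$, while the left-hand side is $\delta - \epsilon\gamma = \delta - \tfrac{\epsilon\delta}{\epsilon-\alpha} = -\tfrac{\alpha\delta}{\epsilon-\alpha}$. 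So $\delta - \epsilon\gamma \leq \tfrac1N\sum_j \min\{f(\bm{x},\bm{\zeta}^j)-\gamma,0\}$, giving $\bm{x} \in Z$.

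The main subtlety — and the place to be careful — is the counting step: $\Pr_{\bzeta}$ puts mass $1/N$ on each sample, so ``probability $\geq 1-\alpha$'' means the number of good scenarios is $\geq (1-\alpha)N$, hence the number of bad scenarios is $\leq \alpha N$; one should make sure the inequality $\tfrac1N \sum_{\text{bad }j}(-\gamma) \geq -\alpha\gamma$ is used in the correct direction (it is, since there are at most $\alpha N$ bad terms each equal to $-\gamma < 0$, unless $\gamma=0$, i.e. $\delta=0$, which is excluded). A secondary point worth noting in the write-up is that $\alpha$ ranges over $[0,\epsilon)$ so the denominator $\epsilon - \alpha$ is strictly positive and $\gamma$ is well-defined; taking $\alpha = 0$ recovers the robust-scenario approximation $Z_R$ of Theorem~\ref{cor_var_SA}, so $Z_I$ is a (weakly) tighter inner approximation. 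No converse inclusion is claimed, so the proof ends once $Z_I \subseteq Z$ is established.
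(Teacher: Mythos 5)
Your proof is correct and follows essentially the same route as the paper's: both set $\gamma=\delta/(\epsilon-\alpha)$, count the scenarios with $f(\bm{x},\bm{\zeta}^j)<\gamma$ (at most $\alpha N$ of them), and use $f\geq 0$ to bound each such term below by $-\gamma$, yielding exactly $\delta-\epsilon\gamma=-\alpha\gamma$ on the left-hand side of \eqref{eq_gamma_delta}. The only cosmetic difference is that you treat the case $\bm{a}(\bm{x})=\bm 0$ separately via $Z_2$, whereas the paper absorbs it into the uniform restatement $\Pr_{\bzeta}\{f(\bm x,\bzeta)\geq \delta/(\epsilon-\alpha)\}\geq 1-\alpha$ using the characteristic-function branch of $f$.
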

\begin{proof}
According to the definition of $f(\bm{x},\bm{\zeta})$ in \eqref{eq_def_f} and the fact $\frac{\delta}{\epsilon}>0$, set $\zsetd_{I}$ is equivalent to
\begin{equation}
	\zsetd_{I}=\left\{\bm{x}\in \Re^n:\Pr_{\bzeta}\left\{f(\bm x,\bzeta)\geq \frac{\delta}{\epsilon-\alpha}\right\}\geq 1-\alpha, 0\leq \alpha<\epsilon\right\}. \label{zset_inner_2}
	\end{equation}
For any $\bm{x}\in Z_I$, we need to show that $\bm{x}\in Z$. Since $\bm{x}\in Z_I$, there exists an $\alpha$ such that $(\bm{x},\alpha)$ satisfies constraints in \eqref{zset_inner_2}. Now let us define $\gamma=\frac{\delta}{\epsilon-\alpha}$. It remains to show that
$(\gamma,\bm{x})$ satisfies the constraints \eqref{zset_P_W_2}.

Let us define a set
\[\C=\left\{j\in [N]:f(\bm x,\bm\zeta^j)< \gamma\right\}.\]
Since $\gamma=\frac{\delta}{\epsilon-\alpha}$ and $\Pr_{\bzeta}\left\{\bzeta=\bm\zeta^j\right\}=\frac{1}{N}$ for all $j\in [N]$, thus according to \eqref{zset_inner_2}, we have 
$$|\C| \leq \sum_{j\in [N]}\I\left(f(\bm x,\bm\zeta^j)< \frac{\delta}{\epsilon-\alpha}\right)\leq N\alpha,$$
where the first inequality is due to $\gamma= \frac{\delta}{\epsilon-\alpha}$ and the second inequality is due to \eqref{zset_inner_2}. 
Hence,
\begin{align*}
 \frac{1}{N}\sum_{j\in [N]} \min\left\{f(\bm{x},\bm{\zeta}^j)-\gamma,0\right\} = \frac{1}{N}\sum_{j\in \C} \left(f(\bm{x},\bm{\zeta}^j)-\gamma\right)
 \geq  -\frac{|\C|}{N}\gamma\geq -\alpha\gamma =\delta-\epsilon\gamma,
\end{align*}
where the first inequality is due to $f(\bm{x},\bm{\zeta}^j)\geq 0$ according to its definition in \eqref{eq_def_f} and the second inequality is due to $|\C| \leq N\alpha$.\qed
\end{proof}

We remark that this result together with set $Z_{\VaR}$ shows that the DRCC set $Z$ can be inner and outer approximated by sets induced by regular chance constraints with empirical distribution $\Pr_{\bzeta}$.

We also observe that (i) set $Z_R$ is a special case of set $\zsetd_{I}$ by letting $\alpha=0$, thus, we must have $Z_R\subseteq \zsetd_{I}$; (ii) there are $\lceil N\epsilon\rceil$ disjoint intervals that $\alpha$ belong to, that is,
$$\alpha\in \cup_{i\in [\lceil N\epsilon\rceil]}\left[\frac{i-1}{N},\frac{i}{N}\right).$$ Suppose that $\alpha\in (\frac{i-1}{N}, \frac{i}{N})$ for some $i \in [\lceil N\epsilon\rceil]$. Since $\Pr_{\bzeta}\left\{\bzeta=\bm\zeta^j\right\}=\frac{1}{N}$ for all $j\in [N]$, thus the chance constraint in \eqref{zset_inner_2} is equivalent to
\[\Pr_{\bzeta}\left\{f(\bm x,\bzeta)\geq \frac{\delta}{\epsilon-\alpha}\right\}\geq 1-\frac{i-1}{N}.\]
The feasible region induced by the above chance constraint increases if we decrease the value of $\alpha$ to $\frac{i-1}{N}$. Therefore, to optimize over set $S\cap Z_{I}$, we only need to enumerate these $\lceil N\epsilon\rceil$ different values of $\alpha$ and choose the one which yields the smallest objective value; (iii) for each given $\alpha$, the chance constraint in \eqref{zset_inner_2_nc} is mixed integer program representable. These three results are summarized below.
\begin{corollary}\label{ref_math_thm_inner_2_cor1}Let set $\zsetd_{I}$ be defined in \eqref{zset_inner_2}, then
\begin{enumerate}[(i)]
\item $Z_R\subseteq Z_I\subseteq Z$;  
\item set $\zsetd_{I}=\cup_{\alpha\in \left\{0,\frac{1}{N},\ldots, \frac{\lceil N\epsilon\rceil-1}{N}\right\}}\zsetd_{I}^{\alpha}$, where set $\zsetd_{I}^{\alpha}$ is defined as
\begin{equation}
	\zsetd_{I}^\alpha=\left\{\bm{x}\in \Re^n:\Pr_{\bzeta}\left\{\frac{\delta}{\epsilon-\alpha}\|\bm{a}(\bm{x})\|_*+\bm{a}(\bm{x})^{\top}\bzeta_i\leq b_i(\bm{x}), i\in [I]\right\}\geq 1-\alpha\right\};\label{zset_inner_3}
	\end{equation}
for each $\alpha\in \left\{0,\frac{1}{N},\ldots, \frac{\lceil N\epsilon\rceil-1}{N}\right\}$; and
\item suppose that there exists an $\bm{M}^\alpha\in \Re^{N}$ such that 
	$$\max_{i\in [I]}\max_{\bm{x} \in Z_{I}^\alpha}\left\{\bm{a}(\bm{x})^{\top}\bm{\zeta}_i^j+\frac{\delta}{\epsilon-\alpha}\|\bm{a}(\bm{x})\|_*-b_i(\bm{x})\right\}\leq M_{j}^\alpha$$
	for all $j\in [N]$, then set $\zsetd_{I}^{\alpha}$ is mixed integer representable, i.e.,
	\begin{subequations}\label{zset_inner_alpha}
		\begin{empheq}[left={\zsetd_{I}^\alpha=\empheqlbrace{\bm{x} \in \Re^n:}},right=\empheqrbrace]{align}
		& \frac{1}{N}\sum_{j\in [N]}y_j\geq 1-\alpha,\\
		&\frac{\delta}{\epsilon-\alpha}\|\bm{a}(\bm{x})\|_*\leq b_i(\bm{x})-\bm{a}(\bm{x})^{\top}\bm{\zeta}_i^j +M_{j}^\alpha(1-y_{j}),\forall i\in [I], j\in [N],\\
		& y_{j}\in \{0,1\} ,\forall j\in [N].
		\end{empheq}
	\end{subequations}
\end{enumerate}
\end{corollary}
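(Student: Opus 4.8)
The plan is to prove the three parts of Corollary~\ref{ref_math_thm_inner_2_cor1} in order, each time exploiting that the empirical distribution $\Pr_{\bzeta}$ has exactly $N$ atoms, each of mass $1/N$.

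For part (i), the inclusion $Z_I\subseteq Z$ is precisely Theorem~\ref{ref_math_thm_inner_2}, so nothing new is needed there. For $Z_R\subseteq Z_I$ I would set $\alpha=0$ in the description \eqref{zset_inner_2_nc} of $Z_I$: the resulting requirement $\Pr_{\bzeta}\{\frac{\delta}{\epsilon}\|\bm a(\bm x)\|_*+\bm a(\bm x)^{\top}\bzeta_i\leq b_i(\bm x),\,i\in[I]\}\geq 1$ is, because every atom has positive probability, equivalent to demanding that all $N$ sampled constraints $\frac{\delta}{\epsilon}\|\bm a(\bm x)\|_*+\bm a(\bm x)^{\top}\bm\zeta^j_i\leq b_i(\bm x)$ hold for $j\in[N],\,i\in[I]$, i.e., to $\bm x\in Z_R$. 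Hence $Z_R=Z_I^{0}\subseteq Z_I\subseteq Z$.

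For part (ii), the inclusion $\bigcup_{\alpha}Z_I^{\alpha}\subseteq Z_I$ is immediate since each $\alpha\in\{0,\tfrac1N,\dots,\tfrac{\lceil N\epsilon\rceil-1}{N}\}$ lies in $[0,\epsilon)$ (one should check $\tfrac{\lceil N\epsilon\rceil-1}{N}<\epsilon$, which holds because $\lceil N\epsilon\rceil-1<N\epsilon$). The reverse inclusion is the substantive step. Given $\bm x\in Z_I$ with a witnessing $\alpha^*\in[0,\epsilon)$, I would locate the unique $i\in[\lceil N\epsilon\rceil]$ with $\alpha^*\in[\tfrac{i-1}{N},\tfrac{i}{N})$ and argue $\bm x\in Z_I^{(i-1)/N}$ using two monotonicity facts. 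First, since $\Pr_{\bzeta}$ places mass $1/N$ on each sample, $\Pr_{\bzeta}\{\cdots\}\geq 1-\alpha^*$ forces at least $\lceil N(1-\alpha^*)\rceil$ samples to satisfy the penalized constraint, and from $\alpha^*<\tfrac{i}{N}$ we get $N(1-\alpha^*)>N-i$, hence at least $N-i+1$ samples are satisfied. Second, the coefficient $\delta/(\epsilon-\alpha)$ is nondecreasing in $\alpha$ on $[0,\epsilon)$, so replacing $\alpha^*$ by $\tfrac{i-1}{N}\leq\alpha^*$ only weakens every per-scenario constraint; therefore the same $\geq N-i+1$ samples remain satisfied at $\alpha=\tfrac{i-1}{N}$, giving $\Pr_{\bzeta}\{\cdots\}\geq\tfrac{N-i+1}{N}=1-\tfrac{i-1}{N}$, i.e., $\bm x\in Z_I^{(i-1)/N}$.

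Part (iii) is the routine big-$M$ linearization of a chance constraint over the empirical distribution, carried out exactly as in the proof of Theorem~\ref{cor_reform_multi_drccp} (and Corollary~\ref{cor_reform_multi_drccp_var}): for each $j\in[N]$ introduce a binary variable $y_j$ flagging whether sample $j$ is designated to meet all $I$ penalized constraints, replace $\Pr_{\bzeta}\{\cdots\}\geq 1-\alpha$ by the cardinality constraint $\tfrac1N\sum_{j\in[N]}y_j\geq 1-\alpha$, and enforce $\tfrac{\delta}{\epsilon-\alpha}\|\bm a(\bm x)\|_*\leq b_i(\bm x)-\bm a(\bm x)^{\top}\bm\zeta^j_i+M_j^{\alpha}(1-y_j)$ for $i\in[I],\,j\in[N]$; when $y_j=1$ this recovers the original constraint and when $y_j=0$ the hypothesized bound on $M_j^{\alpha}$ renders it redundant, yielding \eqref{zset_inner_alpha}. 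The only delicate point in the whole argument is the rounding step in part (ii): one must keep the ceiling function and the strict-versus-nonstrict inequalities aligned so that ``at least $\lceil N(1-\alpha^*)\rceil$ samples'' collapses cleanly to ``at least $N-i+1$ samples'' uniformly over the half-open interval $[\tfrac{i-1}{N},\tfrac{i}{N})$.
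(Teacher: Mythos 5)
Your proposal is correct and follows essentially the same route as the paper, which establishes the corollary through the remarks immediately preceding it: part (i) via $Z_R=Z_I^{0}$ and Theorem~\ref{ref_math_thm_inner_2}, part (ii) via the observation that the empirical probability is a multiple of $1/N$ so the threshold $1-\alpha$ rounds up to $1-\tfrac{i-1}{N}$ while decreasing $\alpha$ only relaxes the penalty $\delta/(\epsilon-\alpha)$, and part (iii) via the standard big-$M$ linearization of Theorem~\ref{cor_reform_multi_drccp}. Your treatment of the rounding step in part (ii) is, if anything, slightly more explicit than the paper's.
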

\bl{Similar to Theorem~\ref{cor_reform_multi_drccp}, suppose that $\bm{x}\in [\bm{L},\bm{U}]$, then for each $j\in [N]$ and $\alpha \in \left\{0,\frac{1}{N},\ldots, \frac{\lceil N\epsilon\rceil-1}{N}\right\}$, one possible $M_j^\alpha$ in Corollary~\ref{ref_math_thm_inner_2_cor1} can be derived as below:
\begin{align*}
M_j^\alpha:=\max_{i\in [I]}&\left\{\sum_{\tau\in \hat{S}_+}(B_{i\tau}-\eta_1 \zeta_{i\tau}^j)U_\tau+\sum_{\tau\in \hat{S}_-}(B_{i\tau}-\eta_1 \zeta_{i\tau}^j)L_\tau+b^i-\eta_2\zeta_{i(n+1)}^j\right.\\
&\left.+\frac{\delta}{\epsilon-\alpha}\max\left\{\left\|\begin{pmatrix}
\eta_1 \bm{L}\\
\eta_2
\end{pmatrix}\right\|_*,\left\|\begin{pmatrix}
\eta_1 \bm{U}\\
\eta_2
\end{pmatrix}\right\|_*\right\}\right\}.
\end{align*}
where $\hat{S}_+=\{\tau\in [n]: B_{i\tau}-\eta_1 \zeta_{i\tau}^j>0\}$ and $\hat{S}_-=[n]\setminus\hat{S}_+$.}

According to Corollary~\ref{ref_math_thm_inner_2_cor1}, to solve the inner approximation of DRCCP (i.e., $\min_{\bm{x}\in S\cap Z_I}$), we can solve  $\min_{\bm{x}\in S\cap Z_I^\alpha}$ for each $\alpha\in \left\{0,\frac{1}{N},\ldots, \frac{\lceil N\epsilon\rceil-1}{N}\right\}$ and choose the smallest value.

Finally, suppose that the empirical distribution $\Pr_{\bzeta}$ will converge to the true distribution $\Pr^{\infty}$ with an exponential rate (cf.,  Theorem 3.4 \cite{esfahani2015data}), i.e., if $N\rightarrow \infty$, then $\delta\rightarrow 0$ with rate $\delta=\frac{c_1}{N^{c_2}}$, where $c_1>0,c_2>0$ are positive constant. Then with probability one, $Z_{I}\rightarrow Z$ as $N\rightarrow\infty$. Indeed, suppose that $N$ is sufficiently large such that $\frac{c_1}{N^{\frac{c_2}{2}}}<1$. In \eqref{zset_inner_3}, let $\alpha=\frac{\lceil N\epsilon\rceil-\lceil c_1N^{1-\frac{c_2}{2}}\rceil-1}{N}$. Clearly, as $N\rightarrow \infty$, we have $\alpha\rightarrow \epsilon$  and
\[\frac{\delta}{\epsilon -\alpha}=\frac{c_1N^{1-c_2}}{N\epsilon +1-\lceil N\epsilon\rceil+\lceil c_1N^{1-\frac{c_2}{2}}\rceil} \leq N^{-\frac{c_2}{2}} \rightarrow 0\]
where the inequality is due to $N\epsilon +1\geq \lceil N\epsilon\rceil$ and $\lceil c_1N^{1-\frac{c_2}{2}} \rceil\geq c_1N^{1-\frac{c_2}{2}}$. This observation is summarized below.
\begin{proposition}Suppose that the empirical distribution $\Pr_{\bzeta}$ will converge to the true distribution $\Pr^{\infty}$ with an exponential rate. Then with probability one, we have
$Z_{I}\rightarrow Z$ as $N\rightarrow\infty$.
\end{proposition}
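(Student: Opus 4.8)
The plan is to exploit the inclusion chain $Z_R\subseteq Z_I\subseteq Z$ from Corollary~\ref{ref_math_thm_inner_2_cor1}(i): since $Z_I$ is always contained in $Z$, it suffices to show that, with probability one, $Z_I$ fills up $Z$ as $N\to\infty$. Using the union representation $Z_I=\cup_{\alpha}Z_I^{\alpha}$ of Corollary~\ref{ref_math_thm_inner_2_cor1}(ii), this reduces to exhibiting, for each $N$, one admissible level $\alpha_N\in\{0,1/N,\dots,(\lceil N\epsilon\rceil-1)/N\}$ for which the empirical chance-constrained set $Z_I^{\alpha_N}$ in \eqref{zset_inner_3} converges to $Z$. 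Since $Z_I^{\alpha}$ is exactly the empirical chance constraint at confidence $1-\alpha$ with an added penalty $\tfrac{\delta}{\epsilon-\alpha}\|\bm a(\bm x)\|_*$ on each left-hand side, the goal is to choose $\alpha_N$ so that simultaneously (a) $1-\alpha_N\to 1-\epsilon$ and (b) the penalty coefficient $\tfrac{\delta}{\epsilon-\alpha_N}\to 0$.

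The construction is the one indicated just before the proposition. Writing $\delta=c_1/N^{c_2}$ with $c_1,c_2>0$ and taking $N$ large enough that $c_1/N^{c_2/2}<1$, set
\[
\alpha_N=\frac{\lceil N\epsilon\rceil-\lceil c_1N^{1-c_2/2}\rceil-1}{N}.
\]
First I would check that $\alpha_N$ is admissible: its numerator is an integer, it is nonnegative for all large $N$ because $c_1N^{1-c_2/2}=o(N)$ (using $c_2>0$), and it is at most $\lceil N\epsilon\rceil-2\le\lceil N\epsilon\rceil-1$, so $\alpha_N$ lies in the range of Corollary~\ref{ref_math_thm_inner_2_cor1}(ii). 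Next, $\epsilon-\alpha_N=\big(N\epsilon-\lceil N\epsilon\rceil+\lceil c_1N^{1-c_2/2}\rceil+1\big)/N$, whose numerator is $O(N^{1-c_2/2})$ and is bounded below by $\lceil c_1N^{1-c_2/2}\rceil\ge 1$; hence $0<\epsilon-\alpha_N\to 0$, i.e.\ $\alpha_N\to\epsilon$. Finally, using $N\epsilon+1\ge\lceil N\epsilon\rceil$ and $\lceil c_1N^{1-c_2/2}\rceil\ge c_1N^{1-c_2/2}$,
\[
\frac{\delta}{\epsilon-\alpha_N}=\frac{c_1N^{1-c_2}}{N\epsilon+1-\lceil N\epsilon\rceil+\lceil c_1N^{1-c_2/2}\rceil}\le\frac{c_1N^{1-c_2}}{c_1N^{1-c_2/2}}=N^{-c_2/2}\longrightarrow 0.
\]

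It then remains to pass to the limit in \eqref{zset_inner_3} with $\alpha=\alpha_N$. For $\bm x$ with $\bm a(\bm x)\ne\bm 0$, the penalized left-hand side $\tfrac{\delta}{\epsilon-\alpha_N}\|\bm a(\bm x)\|_*+\bm a(\bm x)^\top\bzeta_i$ converges to $\bm a(\bm x)^\top\bzeta_i$ (uniformly over $\bm x$ in bounded sets, by the bound $N^{-c_2/2}$), the required confidence $1-\alpha_N\to 1-\epsilon$, and by the assumed exponential convergence $\Pr_{\bzeta}\to\Pr^{\infty}$ with probability one (Theorem 3.4 of \cite{esfahani2015data}); at the same time $\delta\to 0$ makes $Z$ itself converge to the chance-constrained region under $\Pr^{\infty}$. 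For $\bm x$ with $\bm a(\bm x)=\bm 0$ the penalty vanishes identically and $Z_I^{\alpha_N}\cap\{\bm a(\bm x)=\bm 0\}$ equals $Z_2$ for every $\alpha_N\in[0,1)$, which coincides with $Z\cap\{\bm a(\bm x)=\bm 0\}$ by Proposition~\ref{thm_reform_joint_drccp}, so nothing is lost there. Combining $Z_I^{\alpha_N}\subseteq Z_I\subseteq Z$ with these limits yields $Z_I\to Z$ with probability one. The main obstacle I anticipate is not the estimates above but making the mode of set convergence precise — in particular handling boundary points of $Z$ at which the chance constraint is exactly tight, where one needs a strict-feasibility/continuity argument for the chance function under $\Pr^{\infty}$, or an interpretation of the convergence at the level of optimal values, exactly as in the analogous statement for $Z_{\VaR}$.
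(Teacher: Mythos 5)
Your proposal is correct and follows essentially the same route as the paper: the paper's argument (given in the paragraph immediately preceding the proposition) uses exactly the same choice $\alpha_N=\bigl(\lceil N\epsilon\rceil-\lceil c_1N^{1-c_2/2}\rceil-1\bigr)/N$ and the same bound $\delta/(\epsilon-\alpha_N)\leq N^{-c_2/2}\rightarrow 0$ together with $\alpha_N\rightarrow\epsilon$. Your added checks on admissibility of $\alpha_N$ and your remarks on the mode of set convergence go slightly beyond what the paper records, but they do not change the argument.
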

\bl{ We make the following two remarks:
\begin{itemize}
\item According to \cite{esfahani2015data}, any light-tail distribution (e.g., Gaussian distribution) satisfies the assumption in above proposition; and 
\item Sets $Z_{\VaR}$ and $Z_{I}$ together build up a hierarchy of regular chance constrained programs, which converges to DRCC set $Z$ as $N\rightarrow\infty$ and preserves the outer and inner approximations, i.e., $Z_{I}\subseteq Z\subseteq Z_{\VaR}$ for all $N$ and $Z_{I}\rightarrow Z$ and $Z_{\VaR}\rightarrow Z$ as $N\rightarrow\infty$.
\end{itemize}}

\subsection{Inner Approximation III- $\CVaR$ Approximation} 

In this subsection, we will study a well-known convex approximation of a chance constraint, which is to replace the nonconvex chance constraint by a convex constraint defined by $\CVaR$ (cf., \cite{nemirovski2006convex}). For DRCC set $Z$, the resulting approximation is
\begin{empheq}[left={\zsetd_{\CVaR}=\empheqlbrace{\bm{x}\in \Re^n:}},right=\empheqrbrace.]{align}
& \sup_{\Pr\in \P}\inf_{\beta}\left[-\epsilon \beta+\E_{\Pr}\left[\left(\max_{i\in [I]}\left(\bm{a}(\bm{x})^{\top}{\rxi_i}-b_i(\bm{x})\right)+\beta\right)_{+}\right]\right] \leq 0\label{sp_CVaR}
\end{empheq}
Set $\zsetd_{\CVaR}$ (\ref{sp_CVaR}) is convex and is an inner approximation of set $Z$. The following results show a reformulation of set $\zsetd_{\CVaR}$. We would like to acknowledge that this result has been independently observed by
a recent work in \cite{hota2018data}. \bl{Thus, the proof is omitted.} 
\begin{theorem}\label{thm_cvar} Set $\zsetd_{\CVaR}\subseteq Z$ is equivalent to
	\begin{subequations}\label{zset_P_W_inner_c}
		\begin{empheq}[left={\zsetd_{\CVaR}=\empheqlbrace{\bm{x}\in \Re^n:}},right=\empheqrbrace]{align}
		& \delta \nu-\epsilon\gamma \leq \frac{1}{N}\sum_{j\in [N]}z_j,\label{eq_gamma_nu_cvar}\\
		&z_j+\gamma\leq b_i(\bm{x})-\bm{a}(\bm{x})^{\top}\bm{\zeta}_i^j, \forall j\in [N],i\in [I],\label{eq_z_gamma_x_cvar}\\
		&z_j\leq 0, \forall j\in [N],\label{eq_z_0_cvar}\\
		&\|\bm{a}(\bm{x})\|_* \leq \nu, \label{eq_a_nu_cvar}\\
		&\nu\geq 0,\gamma\geq 0.
		\end{empheq}
	\end{subequations}
\end{theorem}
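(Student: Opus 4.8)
The plan is to read the left-hand side of \eqref{sp_CVaR} as a positive multiple of a worst-case conditional value-at-risk and to evaluate it in closed form by combining the Rockafellar--Uryasev representation with the same Wasserstein strong-duality result already used in the proof of Theorem~\ref{thm_exact_norm}. Throughout, write $L(\bm{x},\bm{\xi}):=\max_{i\in[I]}\big(\bm{a}(\bm{x})^{\top}\bm{\xi}_i-b_i(\bm{x})\big)$, which is convex in $\bm{\xi}$ and Lipschitz continuous with modulus $\|\bm{a}(\bm{x})\|_*$ in the metric of $\Xi$; note that $\bm{x}\in\zsetd_{\CVaR}$ holds exactly when $\sup_{\Pr\in\P}\CVaR_{1-\epsilon}^{\Pr}\!\big[L(\bm{x},\tilde{\bm{\xi}})\big]\le 0$, and that the inclusion $\zsetd_{\CVaR}\subseteq\zsetd$ is immediate from $\CVaR_{1-\epsilon}(\cdot)\ge\VaR_{1-\epsilon}(\cdot)$ applied under each $\Pr\in\P$.

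\emph{Step 1: interchange $\sup_{\Pr}$ and $\inf_{\beta}$.} The map $(\beta,\Pr)\mapsto -\epsilon\beta+\E_{\Pr}\big[(L(\bm{x},\tilde{\bm{\xi}})+\beta)_+\big]$ is convex in $\beta$, affine (hence concave and upper semicontinuous) in $\Pr$, and $\P$ is convex. Moreover, since every $\Pr\in\P$ has first moments controlled by $\delta<\infty$ relative to $\Pr_{\bzeta}$, the quantity $\E_{\Pr}[L(\bm{x},\tilde{\bm{\xi}})]$ is bounded below uniformly over $\P$, so the infimum over $\beta$ is coercive and attained in a compact interval independent of $\Pr$. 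Restricting $\beta$ to that interval, a minimax argument (e.g., Sion's theorem) gives
\[
\sup_{\Pr\in\P}\ \inf_{\beta}\Big[-\epsilon\beta+\E_{\Pr}\big[(L(\bm{x},\tilde{\bm{\xi}})+\beta)_+\big]\Big]
=\inf_{\beta}\ \sup_{\Pr\in\P}\Big[-\epsilon\beta+\E_{\Pr}\big[(L(\bm{x},\tilde{\bm{\xi}})+\beta)_+\big]\Big].
\]

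\emph{Step 2: evaluate the inner worst-case expectation and reformulate.} For fixed $\beta$ the integrand $(L(\bm{x},\cdot)+\beta)_+$ is upper semicontinuous with at most linear growth, so Theorem~1 of \cite{gao2016distributionally} (equivalently \cite{blanchet2016quantifying}) yields
\[
\sup_{\Pr\in\P}\E_{\Pr}\big[(L(\bm{x},\tilde{\bm{\xi}})+\beta)_+\big]
=\inf_{\lambda\ge 0}\Big\{\lambda\delta+\tfrac1N\!\sum_{j\in[N]}\sup_{\bm{\xi}}\big[(L(\bm{x},\bm{\xi})+\beta)_+-\lambda\|\bm{\xi}-\bm{\zeta}^j\|\big]\Big\}.
\]
Because $(L(\bm{x},\cdot)+\beta)_+$ has Lipschitz modulus $\|\bm{a}(\bm{x})\|_*$, the inner supremum equals $(L(\bm{x},\bm{\zeta}^j)+\beta)_+$ for $\lambda\ge\|\bm{a}(\bm{x})\|_*$ and is $+\infty$ for $\lambda<\|\bm{a}(\bm{x})\|_*$ when $\bm{a}(\bm{x})\ne\bm{0}$; since $\delta>0$ the optimal multiplier is $\lambda=\|\bm{a}(\bm{x})\|_*$, giving $\|\bm{a}(\bm{x})\|_*\delta+\tfrac1N\sum_{j\in[N]}(L(\bm{x},\bm{\zeta}^j)+\beta)_+$ (this mirrors Claim~1, with the indicator replaced by a Lipschitz function). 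Plugging back into Step~1, $\bm{x}\in\zsetd_{\CVaR}$ iff there is $\beta$ with $-\epsilon\beta+\|\bm{a}(\bm{x})\|_*\delta+\tfrac1N\sum_{j\in[N]}\big(\max_{i\in[I]}(\bm{a}(\bm{x})^{\top}\bm{\zeta}_i^j-b_i(\bm{x}))+\beta\big)_+\le 0$. Introducing $\nu\ge\|\bm{a}(\bm{x})\|_*$, setting $\gamma:=\beta$, and modeling each positive part by $-z_j$ with $z_j\le 0$ and $z_j+\gamma\le b_i(\bm{x})-\bm{a}(\bm{x})^{\top}\bm{\zeta}_i^j$ for all $i\in[I]$ (so that at optimality $z_j=\min\{\min_{i}(b_i(\bm{x})-\bm{a}(\bm{x})^{\top}\bm{\zeta}_i^j)-\gamma,0\}$) reproduces exactly \eqref{zset_P_W_inner_c}; as in the final step of the proof of Theorem~\ref{thm_exact_norm}, both inclusions use only that enlarging $\nu$ and increasing $z_j$ preserves feasibility. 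Finally $\gamma\ge 0$ is not an added restriction: if $\gamma<0$ then the left-hand side of \eqref{eq_gamma_nu_cvar} is at least $-\epsilon\gamma>0$ while its right-hand side is nonpositive, a contradiction, exactly as in the proof of Corollary~\ref{ref_math_thm_cor}.

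The only genuinely delicate point is the minimax interchange in Step~1: once $\sup_{\Pr}$ and $\inf_{\beta}$ are swapped, everything else is a routine repetition of the machinery already developed for Theorem~\ref{thm_exact_norm}. The care needed there is to make the compactness estimate for the optimal $\beta$ uniform over the whole ambiguity set $\P$, which is precisely where finiteness of $\delta$ and the Wasserstein control of first moments enter; alternatively, this step can be quoted from known closed-form expressions for worst-case $\CVaR$ over Wasserstein balls.
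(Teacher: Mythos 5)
Your proof is correct and follows essentially the same route as the paper's intended derivation (the paper omits the formal argument, deferring to \cite{hota2018data}, but the approach it has in mind is exactly this: interchange $\sup_{\Pr}$ and $\inf_{\beta}$ after a uniform compactness estimate on the optimal $\beta$, apply the Gao--Kleywegt strong duality to the Lipschitz integrand $(L+\beta)_+$ so that the optimal multiplier is $\lambda=\|\bm{a}(\bm{x})\|_*$, and then linearize). The only nit is the phrase ``enlarging $\nu$ and increasing $z_j$ preserves feasibility'': what the equivalence with \eqref{zset_P_W_inner_c} actually uses is the reverse monotonicity, namely that replacing $\nu$ by the \emph{smaller} value $\|\bm{a}(\bm{x})\|_*$ and each $z_j$ by the \emph{larger} value $\min\{\min_{i}(b_i(\bm{x})-\bm{a}(\bm{x})^{\top}\bm{\zeta}_i^j)-\gamma,0\}$ preserves feasibility of \eqref{eq_gamma_nu_cvar}, but this is cosmetic and does not affect the argument.
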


\exclude{\begin{proof}
Note that Wasserstein ambiguity set $\P$ is weakly compact \cite{boissard2011simple}, thus according to Theorem 2.1 in \cite{shapiro2002minimax}, set $Z_{\CVaR}$ is equivalent to
\begin{empheq}[left={\zsetd_{\CVaR}=\empheqlbrace{\bm x\in \Re^n:}},right=\empheqrbrace]{align}
& \inf_{\beta}\left[-\epsilon \beta+\sup_{\Pr\in \P}\E_{\Pr}\left(\max_{i\in [I]}\left(\bm{a}(\bm{x})^{\top}{\rxi_i}-b_i(\bm{x})\right)+\beta\right)_{+}\right] \leq 0.\label{sp_CVaR2}
\end{empheq}
Note that in the \eqref{sp_CVaR2}, the infimum must be achieved. Indeed, we first note that for any $\beta<0$, the inequality in \eqref{sp_CVaR2} will not be satisfied. Thus, we must have $\beta\geq 0$. On the other hand, we note that
\begin{align*}
&-\epsilon \beta+\sup_{\Pr\in \P}\E_{\Pr}\left[\left(\max_{i\in [I]}\left(\bm{a}(\bm{x})^{\top}{\rxi_i}-b_i(\bm{x})\right)+\beta\right)_{+}\right]\geq -\epsilon \beta+\E_{\Pr_{\bzeta}}\left[\left(\max_{i\in [I]}\left(\bm{a}(\bm{x})^{\top}\bzeta_i-b_i(\bm{x})\right)+\beta\right)_{+}\right] 
\end{align*}
where the inequality is due to $\Pr_{\bzeta}\in \P$. 
The right-hand side of the above inequality will be strictly greater than $0$ for any $\beta> \frac{1}{1-\epsilon}\max\left(\max_{i\in [I],j\in [N]}\left(b_i(\bm{x})-\bm{a}(\bm{x})^{\top}\bm{\zeta}_i^j\right),0\right)$. Thus, the best $\beta$ in \eqref{sp_CVaR2} is bounded, i.e., $\zsetd_{\CVaR}$ is equivalent to
\begin{empheq}[left={\zsetd_{\CVaR}=\empheqlbrace{\bm x \in \Re^n:}},right=\empheqrbrace]{align}
& -\epsilon \beta+\sup_{\Pr\in \P}\E_{\Pr}\left[\left(\max_{i\in [I]}\left(\bm{a}(\bm{x})^{\top}\rxi_i-b_i(\bm{x})\right)+\beta\right)_{+} \right]\leq 0,\beta\geq 0.\label{sp_CVaR3}
\end{empheq}

Note that function $f(\bm\xi)=\left(\max_{i\in [I]}\left(\bm{a}(\bm{x})^{\top}\bm\xi_i-b_i(\bm{x})\right)+\beta\right)_{+}$ is Lipschitz continuous with Lipschitz coefficient $\|\bm a(\bm {x})\|_*$. Thus, the growth function defined in (10) in \cite{gao2016distributionally} is upper bounded by
\[\frac{f(\bm\xi)-f(\hat{\bm\xi})}{\|\bm\xi-\hat{\bm\xi}\|}\leq \|\bm{a}(\bm{x})\|_*\]
which is finite for any $\hat{\bm{\xi}}$ and $\bm{x}$. Hence, according to the strong duality result (i.e., Theorem 1 in \cite{gao2016distributionally}), the above formulation is further equivalent to
\begin{empheq}[left={\zsetd_{\CVaR}=\empheqlbrace{\bm x\in \Re^n:}},right=\empheqrbrace]{align*}
& \min_{\lambda\geq 0}\left[-\epsilon \beta+\lambda \delta-\frac{1}{N}\sum_{j\in [N]} \inf_{\rxi}\left[\lambda {\|\rxi-\bm{\zeta}^j\|}-\left(\max_{i\in [I]}\left(\bm{a}(\bm{x})^{\top}\bxi_i-b_i(\bm{x})\right)+\beta\right)_{+}\right]\right] \leq 0,\\
&\beta\geq0.
\end{empheq}
Above, in the first constraint, replacing the outer minimum by the existence operator and moving the minus sign into the inner maximization operator yields
\begin{empheq}[left={\zsetd_{\CVaR}=\empheqlbrace{\bm x\in \Re^n:}},right=\empheqrbrace]{align*}
& -\epsilon \beta+\lambda \delta-\frac{1}{N}\sum_{j\in [N]} \inf_{\rxi}\min\left[\min_{i\in [I]}\left(\lambda {\|\rxi-\bm{\zeta}^j\|}-\left(\bm{a}(\bm{x})^{\top}\bxi_i-b_i(\bm{x})\right)-\beta\right),0\right]\leq 0,\\
&\lambda,\beta\geq0.
\end{empheq}

Swap the two minimization operators with infimum one and we have
\begin{empheq}[left={\zsetd_{\CVaR}=\empheqlbrace{\bm x\in \Re^n:}},right=\empheqrbrace]{align*}
& -\epsilon \beta+\lambda \delta-\frac{1}{N}\sum_{j\in [N]} \min\left[\min_{i\in [I]}\left[\inf_{\rxi}\left(\lambda {\|\rxi-\bm{\zeta}^j\|}-\bm{a}(\bm{x})^{\top}\bxi_i\right)+b_i(\bm{x})-\beta\right],0\right]\leq 0,\\
&\lambda,\beta\geq0.
\end{empheq}
Note that for each $i\in [I],j\in [N]$, the inner infimum is equivalent to
\[\inf_{\rxi}\left(\lambda {\|\rxi-\bm{\zeta}^j\|}-\bm{a}(\bm{x})^{\top}\bxi_i\right)=\begin{cases}
-\bm{a}(\bm{x})^{\top}\bm{\zeta}_i^j, &\text{ if }\|\bm{a}(\bm{x})\|_*\leq \lambda,\\
+\infty,\text{ otherwise.}
\end{cases}\]
Thus, we finally arrive at
	\begin{empheq}[left={\zsetd_{\CVaR}=\empheqlbrace{\bm x\in \Re^n:}},right=\empheqrbrace]{align*}
& -\epsilon \beta+\lambda \delta-\frac{1}{N}\sum_{j\in [N]} \min\left[\min_{i\in [I]}\left(b_i(\bm{x})-\bm{a}(\bm{x})^{\top}\bm{\zeta}_i^j\right)-\beta,0\right]\leq 0 ,\\
&\|\bm{a}(\bm{x})\|_*\leq \lambda, \forall i\in [I],\\
&\lambda,\beta \geq 0.
\end{empheq}
In the above formulation, let $\nu=\lambda,\gamma =\beta$ and also let $z_j= \min\left[\min_{i\in [I]}\left(b_i(\bm{x})-\bm{a}(\bm{x})^{\top}\bm{\zeta}_i^j\right)-\beta,0\right]$ and linearize it for each $j\in [N]$. Thus, we arrive at \eqref{zset_P_W_inner_c}.
\qed

\end{proof}}
We remark that one can directly derive the equivalent form \eqref{zset_P_W_inner_c} of set $Z_{\CVaR}$ based upon formulation \eqref{zset_P_W_2}.
\begin{remark}
Since $\max\{b_i(\bm{x})-\bm{a}(\bm{x})^{\top}\bm{\zeta}_i,0\}\geq b_i(\bm{x})-\bm{a}(\bm{x})^{\top}\bm{\zeta}_i$, by replacing $\max\{b_i(\bm{x})-\bm{a}(\bm{x})^{\top}\bm{\zeta}_i,0\}$ with $b_i(\bm{x})-\bm{a}(\bm{x})^{\top}\bm{\zeta}_i$, then function $f(\bm{x},\bm{\zeta})$ is lower bounded by \[f(\bm{x},\bm{\zeta})\geq \underline{f}(\bm{x},\bm{\zeta})=\min\left\{\min_{i\in [I]\setminus\Ie(\bm x)}\frac{b_i(\bm{x})-\bm{a}(\bm{x})^{\top}\bm{\zeta}_i}{\|\bm{a}(\bm{x})\|_*},\min_{i\in \Ie(\bm x)}\chi_{b_i(\bm{x})<0}(x)\right\}.\]
Thus, set $Z$ can be inner approximated by the following set	
\begin{empheq}[left={\empheqlbrace{\bm{x}\in \Re^n:}},right=\empheqrbrace]{align*}
& \delta-\epsilon\gamma\leq \frac{1}{N}\sum_{j\in [N]} \min\left\{\underline{f}(\bm{x},\bm{\zeta}^j)-\gamma,0\right\} ,\\
&\gamma\geq 0.
\end{empheq}
By introducing additional variables $\bm{z}$ to linearize the nonlinear function $\min\left\{\underline{f}(\bm{x},\bm{\zeta})-\gamma,0\right\}$, we arrive at
\begin{empheq}[left={\empheqlbrace{\bm{x}\in \Re^n:}},right=\empheqrbrace]{align*}
& \delta-\epsilon\gamma\leq \frac{1}{N}\sum_{j\in [N]} z_j ,\\
		&z_j+\gamma\leq \min_{i\in [I]\setminus\Ie(\bm x)}\frac{b_i(\bm{x})-\bm{a}(\bm{x})^{\top}\bm{\zeta}_i}{\|\bm{a}(\bm{x})\|_*}, \forall j\in [I],\\
		&z_j+\gamma\leq \min_{i\in \Ie(\bm x)}\chi_{b_i(\bm{x})<0}(x), \forall j\in [I],\\
&\gamma\geq 0,z_j\leq 0, \forall j\in [n].
\end{empheq}
This set can be proven to be exactly equal to set $Z_{\CVaR}$ by discussing whether $\|\bm{a}(\bm{x})\|_*>0$ or not: 
\begin{enumerate}[(i)]
\item if $\|\bm{a}(\bm{x})\|_*>0$, then replace $\delta, \{z_j\}_{j\in [N]}$ and $\gamma$ by $\delta\|\bm{a}(\bm{x})\|_*, \{z_j\|\bm{a}(\bm{x})\|_*\}_{j\in [N]}$ and $\gamma\|\bm{a}(\bm{x})\|_*$;
\item if $\|\bm{a}(\bm{x})\|_*=0$, since $\delta\leq \epsilon\gamma+\frac{1}{N}\sum_{j\in [N]} z_j$ and $\delta>0,\epsilon>0$, according to the pigeonhole principle, we must have $z_{j_0}+\gamma>0$ for some $j_0\in [N]$, which implies that $b_i(\bm{x})\geq 0$ for all $i\in [I]$.
\end{enumerate}
\end{remark}

This observation inspires us that $Z_{\CVaR}=Z$ if $N\epsilon \leq 1$. In fact, if $N\epsilon \leq 1$, then we must have $f(\bm{x},\bm{\zeta}^j)= \underline{f}(\bm{x},\bm{\zeta}^j)$ for all $j\in [N]$, which implies that $Z_{\CVaR}=Z$.
\begin{proposition}\label{thm_cvar_normed2}Suppose that $\epsilon \in (0,1/N]$, then $\zsetd=\zsetd_{\CVaR}$.
\end{proposition}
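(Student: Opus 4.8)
The plan is to obtain $\zsetd=\zsetd_{\CVaR}$ from the two inclusions separately, with essentially all the work going into the harder direction. The inclusion $\zsetd_{\CVaR}\subseteq\zsetd$ is already in hand: it is part of the statement of Theorem~\ref{thm_cvar}, and it also follows from the remark preceding this proposition, which records that $f(\bm x,\bm\zeta)\ge\underline f(\bm x,\bm\zeta)$ (here $\underline f$ is obtained from \eqref{eq_def_f} by deleting the inner $\max\{\,\cdot\,,0\}$) and that $\zsetd_{\CVaR}$ equals the set of $\bm x\in\Re^n$ for which some $\gamma\ge 0$ satisfies $\delta-\epsilon\gamma\le\frac1N\sum_{j\in[N]}\min\{\underline f(\bm x,\bm\zeta^j)-\gamma,0\}$. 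So the whole argument reduces to proving $\zsetd\subseteq\zsetd_{\CVaR}$ when $\epsilon\in(0,1/N]$, and since $\underline f\le f$ termwise, it suffices to establish the pointwise identity $f(\bm x,\bm\zeta^j)=\underline f(\bm x,\bm\zeta^j)$ for every $\bm x\in\zsetd$ and every $j\in[N]$: then any $\gamma\ge 0$ certifying $\bm x\in\zsetd$ through \eqref{zset_P_W_2} certifies $\bm x\in\zsetd_{\CVaR}$ as well.

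Next I would fix $\bm x\in\zsetd$ with witness $\gamma\ge 0$ and first note $\gamma\ge\delta/\epsilon>0$: because $f(\bm x,\cdot)\ge 0$ by \eqref{eq_def_f}, every summand $\min\{f(\bm x,\bm\zeta^j)-\gamma,0\}$ in \eqref{eq_gamma_delta} is $\le 0$, which forces $\delta-\epsilon\gamma\le 0$. If $\bm a(\bm x)=\bm 0$ the identity is immediate, since then $\Ie(\bm x)=[I]$ and both $f(\bm x,\cdot)$ and $\underline f(\bm x,\cdot)$ collapse to $\min_{i\in[I]}\chi_{\{b_i(\bm x)<0\}}(\bm x)$. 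So assume $\bm a(\bm x)\ne\bm 0$, hence $\Ie(\bm x)=\emptyset$; writing $c^j_i=(b_i(\bm x)-\bm a(\bm x)^\top\bm\zeta^j_i)/\|\bm a(\bm x)\|_*$, a short case check comparing $\min_i\max\{c^j_i,0\}$ with $\min_i c^j_i$ shows that $f(\bm x,\bm\zeta^j)$ and $\underline f(\bm x,\bm\zeta^j)$ agree whenever $\min_i c^j_i\ge 0$, and that otherwise $f(\bm x,\bm\zeta^j)=0>\underline f(\bm x,\bm\zeta^j)$.

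Finally I would argue by contradiction. If $f(\bm x,\bm\zeta^{j_0})\ne\underline f(\bm x,\bm\zeta^{j_0})$ for some $j_0$, then by the previous step $f(\bm x,\bm\zeta^{j_0})=0$, so, using $\gamma>0$, the $j_0$-th summand in \eqref{eq_gamma_delta} equals $-\gamma$ while all remaining summands are $\le 0$; hence $\delta-\epsilon\gamma\le-\gamma/N$, i.e.\ $\delta\le(\epsilon-\tfrac1N)\gamma\le 0$ because $\epsilon\le 1/N$ and $\gamma>0$, contradicting $\delta>0$. This gives $f(\bm x,\bm\zeta^j)=\underline f(\bm x,\bm\zeta^j)$ for all $j\in[N]$, hence $\zsetd\subseteq\zsetd_{\CVaR}$ and the claimed equality. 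I expect the only slightly delicate part to be the bookkeeping in the second and third paragraphs — pinning down that $f$ and $\underline f$ can disagree only when $f=0$ and $\underline f<0$, and cleanly disposing of the degenerate direction $\bm a(\bm x)=\bm 0$; once that is in place, the hypothesis $\epsilon\le 1/N$ enters only through the one-line pigeonhole estimate that produces the contradiction.
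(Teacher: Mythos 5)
Your proof is correct and follows essentially the same route as the paper: the key step in both is the pigeonhole use of $N\epsilon\le 1$ to show that the inner $\max\{\cdot,0\}$ can never be active at zero for a feasible point (the paper phrases this as $z_{j}+\gamma>0$ in the lifted formulations \eqref{zset_P_W_I_1-M} and \eqref{zset_P_W_inner_c}, after splitting off the trivial case $Z_2$, whereas you phrase it as $f(\bm x,\bm\zeta^j)=\underline f(\bm x,\bm\zeta^j)$ working directly with \eqref{zset_P_W_2} and the remark's $\underline f$-characterization of $Z_{\CVaR}$). Indeed, the paper's own prose immediately preceding the proposition sketches exactly your argument.
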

\begin{proof}
We note that $\zsetd_{\CVaR}\subseteq Z=Z_1\cup Z_2$, where $Z_1$ and $Z_2$ are defined in \eqref{zset_P_W_I_1-M} and \eqref{zset_P_W_I_2-M}, respectively. We note that set $Z_2\subseteq Z_{\CVaR}$. Indeed, suppose that $\bm{x}\in Z_2$, i.e., $\bm{a}(\bm{x})=\bm0,b_i(\bm{x})\geq 0$ for each $i\in [I]$, then let $\nu=0,\gamma=0$ and $z_j=0$ for each $j\in [N]$. Clearly, $(\nu,\gamma,\bm{z},\bm{x})$ satisfies the constraints in \eqref{zset_P_W_inner_c}. Hence, $\bm{x}\in Z_{\CVaR}$.

Thus, it is sufficient to show that $Z_1\subseteq \zsetd_{\CVaR}$. Indeed,  given $\bm{x}\in Z_1$, there exists $(\nu,\gamma,\bm{z})$ such that $(\nu,\gamma,\bm{z},\bm{x})$ satisfies the constraints in \eqref{zset_P_W_I_1-M}. We only need to show that $z_j+\gamma >0$ for each $j\in [N]$. Suppose that there exists a $j_0\in [N]$ such that $z_{j_0}+\gamma\leq 0$. Then according to \eqref{z_v_I_1-M}, we have
\[ \delta\nu\leq \frac{1}{N}\sum_{j\in [N]\setminus\{j_0\}}z_j +\frac{1}{N}(N\epsilon\gamma +z_{j_0}) \leq 0\]
where the second inequality is due to $\epsilon N\leq 1$ and $z_{j_0}+\gamma\leq 0$, a contradiction that $\delta>0$. Therefore, in \eqref{z_lambda_a_b_I_1-M}, we must have $$\max\left\{b_i(\bm{x})-\bm{a}(\bm{x})^{\top}\bm{\zeta}_i^j,0\right\}=b_i(\bm{x})-\bm{a}(\bm{x})^{\top}\bm{\zeta}_i^j$$ 
for each $i\in [I], j\in [N]$. Hence, $(\nu,\gamma,\bm{z},\bm{x})$ satisfies the constraints in \eqref{zset_P_W_inner_c}, i.e., $\bm{x}\in Z_{\CVaR}$.
\qed
\end{proof}
The result in Proposition \ref{thm_cvar_normed2} shows that if the risk parameter $\epsilon$ is small enough (i.e., less than or equal to $\frac{1}{N}$), then set $Z$ is convex and is equivalent to its $\CVaR$ approximation.

\subsection{Formulation Comparisons} 

First, we would like to compare sets $Z_R,Z_{\CVaR}$. Indeed, we can show that 
$Z_R\subseteq Z_{\CVaR}$, i.e., set $Z_R$ is at least as conservative as $\CVaR$ approximation $Z_{\CVaR}$.
\begin{proposition} Let $ Z_R,Z_{\CVaR}$ be defined in \eqref{zset_P_W_inner}, \eqref{zset_P_W_inner_c} , respectively. Then $Z_R\subseteq  Z_{\CVaR}.$
\end{proposition}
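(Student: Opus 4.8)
The plan is to prove the inclusion directly: take an arbitrary $\bm{x}\in Z_R$ and exhibit auxiliary variables $(\nu,\gamma,\bm z)$ that certify $\bm{x}\in Z_{\CVaR}$ via the description \eqref{zset_P_W_inner_c}. Since $Z_R$ (as defined in \eqref{zset_P_W_inner}) forces every individual constraint residual $b_i(\bm{x})-\bm{a}(\bm{x})^{\top}\bm{\zeta}_i^j$ to exceed the single common threshold $\frac{\delta}{\epsilon}\|\bm{a}(\bm{x})\|_*$, the natural guess is to let the aggregation variables $z_j$ all be zero and absorb the threshold into $\gamma$.

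Concretely, first I would set $\nu:=\|\bm{a}(\bm{x})\|_*$ and $z_j:=0$ for every $j\in[N]$; this instantly satisfies \eqref{eq_z_0_cvar} and \eqref{eq_a_nu_cvar}, and $\nu\geq 0$. Next, using $\delta,\epsilon>0$ I would set $\gamma:=\frac{\delta}{\epsilon}\|\bm{a}(\bm{x})\|_*\geq 0$. With these choices $\delta\nu-\epsilon\gamma=\delta\|\bm{a}(\bm{x})\|_*-\epsilon\cdot\frac{\delta}{\epsilon}\|\bm{a}(\bm{x})\|_*=0=\frac{1}{N}\sum_{j\in[N]}z_j$, so \eqref{eq_gamma_nu_cvar} holds (with equality). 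It then only remains to verify \eqref{eq_z_gamma_x_cvar}, which for this choice reads $z_j+\gamma=\frac{\delta}{\epsilon}\|\bm{a}(\bm{x})\|_*\leq b_i(\bm{x})-\bm{a}(\bm{x})^{\top}\bm{\zeta}_i^j$ for all $i\in[I],j\in[N]$ --- but this is exactly the defining inequality of $Z_R$ in \eqref{zset_P_W_inner}. Hence $(\nu,\gamma,\bm z,\bm{x})$ is feasible to \eqref{zset_P_W_inner_c}, i.e. $\bm{x}\in Z_{\CVaR}$, which proves $Z_R\subseteq Z_{\CVaR}$.

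I do not anticipate any real obstacle: the argument is a one-line feasibility construction, and it goes through uniformly regardless of whether $\|\bm{a}(\bm{x})\|_*$ is positive or zero (when $\bm{a}(\bm{x})=\bm 0$ the $Z_R$-inequality degenerates to $b_i(\bm{x})\geq 0$, and the same substitution still works). The only point worth stating explicitly is why one is allowed to take $z_j\equiv 0$, namely that membership in $Z_R$ is an ``every scenario, every constraint'' requirement that is strictly stronger than what the $\CVaR$-style averaging in \eqref{eq_gamma_nu_cvar}--\eqref{eq_z_gamma_x_cvar} demands.
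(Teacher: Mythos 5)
Your construction $(\nu,\gamma,\bm z)=(\|\bm{a}(\bm{x})\|_*,\tfrac{\delta}{\epsilon}\|\bm{a}(\bm{x})\|_*,\bm 0)$ is exactly the certificate used in the paper's proof, and your verification of each constraint of \eqref{zset_P_W_inner_c} is correct. The argument is the same as the paper's, just spelled out in more detail.
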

\begin{proof}
Given $\bm{x}\in Z_R$, we only need to show that $\bm{x}\in Z_{\CVaR}$. Indeed, let us consider $\nu=\|\bm{a}(\bm{x})\|_*$, $\gamma=\frac{\delta}{\epsilon}\|\bm{a}(\bm{x})\|_*,z_j=0$ for all $j\in [N]$, then we see that $(\nu,\gamma,\bm z,\bm x)$ satisfies the constraints in \eqref{zset_P_W_inner_c}, i.e., $\bm{x}\in Z_{\CVaR}$.\qed

\end{proof}

The following example illustrates sets $Z, Z_{\VaR}, Z_{\CVaR}, Z_R, Z_I$ and their inclusive relationships. 
\begin{example}\label{example1}Suppose $N=3,n=2,I=2, \delta=1/6,\epsilon=2/3 $ and $\bm{\zeta}_1^1=(0,0,\sqrt{2})^{\top}, \bm{\zeta}_2^1=(0,0,3\sqrt{2})^{\top},\bm{\zeta}_1^2=(0,0,3\sqrt{2})^{\top}, \bm{\zeta}_2^2=(0,0,\sqrt{2})^{\top},\bm{\zeta}_1^3=(0,0,3\sqrt{2})^{\top}, \bm{\zeta}_2^3=(0,0,2\sqrt{2})^{\top},\bm{a}(\bm{x})=\e_3=\begin{pmatrix}
0,0,1
\end{pmatrix}^\top, b_1(x)=x_1,b_2(x)=x_2$. Then, \eqref{zset} becomes:
\begin{equation}
\zsetd:=\left\{(x_1,x_2):\inf_{\Pr\in \P}\Pr\left\{(\tilde{\rxi}_1,\tilde{\rxi}_2):\tilde{\xi}_{13} \leq x_1, \tilde{\xi}_{23} \leq x_2\right\} \ge \frac{1}{3}\right\}. \label{zset_exp}
\end{equation}

By straightforward calculation, we have
	\begin{align*}
	Z&=\left\{(x_1,x_2):2+\frac{\sqrt{2}}{2}\leq x_1, 3+\frac{\sqrt{2}}{2}\leq x_2\right\}\cup\left\{(x_1,x_2):3+\frac{\sqrt{2}}{2}\leq x_1, 2+\frac{\sqrt{2}}{2}\leq x_2\right\}\\
&\cup \left\{(x_1,x_2):3\leq x_1, 3\leq x_2,6+\frac{\sqrt{2}}{2}\leq x_1+ x_2\right\}\\
Z_{\VaR}&=\left\{(x_1,x_2):2+\frac{\sqrt{2}}{4}\leq x_1, 3+\frac{\sqrt{2}}{4}\leq x_2\right\}\cup\left\{(x_1,x_2):3+\frac{\sqrt{2}}{4}\leq x_1, 2+\frac{\sqrt{2}}{4}\leq x_2\right\}\\
Z_{\CVaR}&=\left\{(x_1,x_2):3\leq x_1, 3\leq x_2,6+\frac{\sqrt{2}}{2}\leq x_1+ x_2\right\}\\
Z_R&=\left\{(x_1,x_2):3+\frac{\sqrt{2}}{4}\leq x_1, 3+\frac{\sqrt{2}}{4}\leq x_2\right\}\\
Z_{I}&=\left\{(x_1,x_2):2+\frac{\sqrt{2}}{2}\leq x_1, 3+\frac{\sqrt{2}}{2}\leq x_2\right\}\cup\left\{(x_1,x_2):3+\frac{\sqrt{2}}{2}\leq x_1, 2+\frac{\sqrt{2}}{2}\leq x_2\right\}\\
&\cup\left\{(x_1,x_2):3+\frac{\sqrt{2}}{4}\leq x_1, 3+\frac{\sqrt{2}}{4}\leq x_2\right\}.
	\end{align*}
	Clearly, we have $Z_R\subsetneq \left\{\begin{subarray}{c}
		Z_{\CVaR}\\
		\rotatebox[origin=c]{-90}{$\not\subseteq$}\\
			Z_{I}
		\end{subarray}\right\}\subsetneq Z\subsetneq Z_{\VaR} $ (see Figure~\ref{fig_exp2} for an illustration).
\end{example}

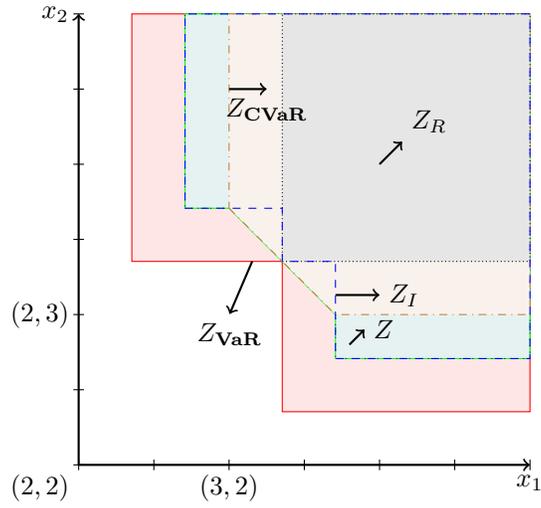
\begin{figure}[htbp]
\centering
	\begin{tikzpicture}[scale=2]
\draw[thick,->] (2,2) -- (5,2) node [below] {$x_1$};
		\draw[thick,->] (2,2) -- (2,5) node [left] {$x_2$};
		
		\foreach \x in {2,2.5,3,...,5} {%
			\draw ($(\x,2) + (0,-\TickSize)$) -- ($(\x,2) + (0,\TickSize)$);
		}
		
		\foreach \y in {2,2.5,3,...,5} {%
			\draw ($(2,\y) + (-\TickSize,0)$) -- ($(2,\y) + (\TickSize,0)$);
		}
		
		\node[below left] at (2,2) {$(2,2)$};
		\node[below] at (3,2) {$(3,2)$};	
		\node[left] at (2,3) {$(2,3)$};

		\draw[fill=red!10, draw=red] ({2+ sqrt(2)/4},{3+ sqrt(2)/4}) -- ({3+ sqrt(2)/4},{3+ sqrt(2)/4})--({3+ sqrt(2)/4},{2+ sqrt(2)/4})
 -- ({5},{2+ sqrt(2)/4})--(5,5)--({2+ sqrt(2)/4},5)-- cycle;
		\node[below] at ({2.65+ sqrt(2)/4},{2.65+ sqrt(2)/4}) {$Z_{\VaR}$};
		\draw[thick,->]({2.8+ sqrt(2)/4},{3+ sqrt(2)/4}) -- ({2.65+ sqrt(2)/4},{2.65+ sqrt(2)/4});

		\draw[fill=pinegreen!10, draw=green] ({2+ sqrt(2)/2},{3+ sqrt(2)/2}) -- ({3},{3+ sqrt(2)/2})-- ({3+ sqrt(2)/2},{3})--({3+ sqrt(2)/2},{2+ sqrt(2)/2})-- ({5},{2+ sqrt(2)/2})--(5,5)--({2+ sqrt(2)/2},5)-- cycle;

		\draw[draw=blue, dotted] ({2+ sqrt(2)/2},{3+ sqrt(2)/2}) -- ({3+ sqrt(2)/4},{3+ sqrt(2)/2})--({3+ sqrt(2)/4},{3+ sqrt(2)/4})-- ({3+ sqrt(2)/2},{3+ sqrt(2)/4})--({3+ sqrt(2)/2},{2+ sqrt(2)/2})-- ({5},{2+ sqrt(2)/2})--(5,5)--({2+ sqrt(2)/2},5)-- cycle;

       \draw[fill=brown!10, draw=brown, dashdotted] ({3},{3+ sqrt(2)/2}) -- ({3+ sqrt(2)/2},{3})--({5},{3})--(5,5)--(3,5)-- cycle;

\draw[fill=black!10,draw=black, densely dotted]  ({3+ sqrt(2)/4},{3+ sqrt(2)/4})--({5},{3+ sqrt(2)/4})--(5,5)--({3+ sqrt(2)/4},5)-- cycle;

		\node[right] at ({3.20+ 0.7},{2.20+ 0.7}) {$Z$};
		\draw[thick,->]({3+ 0.8},{2+ 0.8}) -- ({3.20+ 0.7},{2.20+ 0.7});

\draw[draw=blue, dashed] ({2+ sqrt(2)/2},{3+ sqrt(2)/2}) -- ({3+ sqrt(2)/4},{3+ sqrt(2)/2})--({3+ sqrt(2)/4},{3+ sqrt(2)/4})-- ({3+ sqrt(2)/2},{3+ sqrt(2)/4})--({3+ sqrt(2)/2},{2+ sqrt(2)/2})-- ({5},{2+ sqrt(2)/2})--(5,5)--({2+ sqrt(2)/2},5)-- cycle;
		\node[right] at ({4.0},{3.13}) {$Z_I$};
		\draw[thick,->]({3+ sqrt(2)/2},{3.13}) -- ({4.0},{3.13});

	\node[below] at ({3.25},{4.5}) {$Z_{\CVaR}$};
		\draw[thick,->]({3},{4.5}) -- ({3.25},{4.5});

\node[above right] at ({4.15},{4.15}) {$Z_R$};
		\draw[thick,->]({4},{4}) -- ({4.15},{4.15});
%
%
%
%
			

	\end{tikzpicture}
	\caption{Illustration of Example~\ref{example1}}
	\label{fig_exp2}
\end{figure}

Finally, the theoretical inclusive relationships of sets $Z, Z_{\VaR},  Z_R, Z_I,Z_{\CVaR}$ are shown in Figure~\ref{fig1} and their reformulations are summarized in Table~\ref{table_summary}.

\begin{figure}[htbp]
	\begin{center}
		\includegraphics[width=0.6\textwidth]{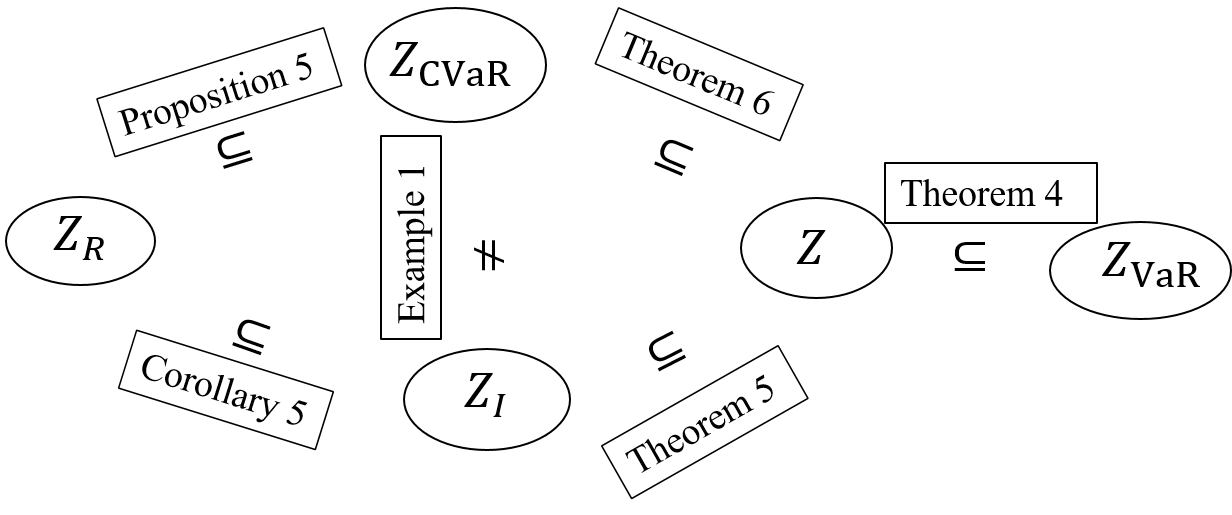} 
	\end{center}
	\caption{Summary of formulation comparisons}
	\label{fig1}
\end{figure}

\begin{table}[htbp]
	\centering
	\caption{Summary of exact formulation and inner and outer approximations from Sections~\ref{sec_general} and~\ref{sec_Approximation}}
	\label{table_summary}
	\renewcommand{\arraystretch}{1.15}
	\begin{tabular}{ccccc}
		\hline
	 Set $Z$ & Set $Z_{\VaR}$  & Set $Z_R$& Set $Z_{I}$ & Set $Z_{\CVaR}$  \\ \hline
	 Mixed-integer & Mixed-integer& Convex & Mixed-integer & Convex
	\\\hline
			 Theorem~\ref{cor_reform_multi_drccp} & Corollary~\ref{cor_reform_multi_drccp_var} & Theorem~\ref{cor_var_SA} & Corollary~\ref{ref_math_thm_inner_2_cor1}& Theorem~\ref{thm_cvar}  
	\\ \hline
	\end{tabular}
\end{table}

}
\section{DRCCP with Pure Binary Decision Variables}\label{sec_DRMKP}

In this section, we will study DRCCP with pure binary decision variables $\bm{x}\in \{0,1\}^n$, i.e., we assume that $S\subseteq  \{0,1\}^n$.
{If $S$ is a bounded integer set, we can use binary expansion to reformulate $S$ an equivalent binary set (c.f., \cite{zou2017stochastic}). }For binary DRCCP, we will show that the reformulations in the previous section can be improved.

\subsection{Polyhedral Results of Submodular Functions: A Review}
Our main derivation of stronger formulations is based upon some polyhedral results of submodular functions, which will be briefly reviewed in this subsection.

{We first briefly introduce the definition of submodularity and interested readers are referred to \cite{edmonds1970submodular,lovasz1983submodular} for more details.
\begin{definition}\label{def_subm_discrete}
{\rm\bf (Submodularity)} Let $2^{[n]}$ be \bl{the} power set of $[n]$. Then a set function $g: 2^{[n]}\rightarrow \mathbb{R}$ is ``\textit{submodular}" if and only if it satisfies the following condition:
\begin{itemize}
\item for every $T_1,T_2\subseteq [n] $ with $T_1 \subseteq T_2$ and every $t\in [n]\setminus T_2$, we must have $g(T_1\cup \{t\})-g(T_1)\geq g(T_2\cup \{t\})-g(T_2)$.
\end{itemize}
\end{definition}}

We first begin with the following lemmas on submodular functions.
\begin{lemma}\label{lemmasub} 
Given $\bm{d}_1\in \Re_+^n,d_2,d_3\in \Re$, function $f(\bm{x})=-\max\left(\bm{d}_1^{\top}\bm{x}+d_2, d_3\right)$ is submodular over the binary hypercube.
\end{lemma}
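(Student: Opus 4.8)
The plan is to verify the defining inequality of submodularity directly from Definition~\ref{def_subm_discrete}. Fix $T_1\subseteq T_2\subseteq[n]$ and $t\in[n]\setminus T_2$, and write $g(T):=f(\e_T)=-\max\!\left(\bm d_1^\top\e_T+d_2,\,d_3\right)$. Since $\bm d_1\in\Re_+^n$, the quantity $\bm d_1^\top\e_T$ is nondecreasing in $T$; introduce the shorthand $a:=\bm d_1^\top\e_{T_1}+d_2$, $b:=\bm d_1^\top\e_{T_2}+d_2$, and $\delta_t:=(\bm d_1)_t\ge 0$, so that $a\le b$ and adding $t$ increases each argument by $\delta_t$. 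The marginal increments are
\begin{align*}
g(T_1\cup\{t\})-g(T_1) &= \max(a,d_3)-\max(a+\delta_t,\,d_3),\\
g(T_2\cup\{t\})-g(T_2) &= \max(b,d_3)-\max(b+\delta_t,\,d_3),
\end{align*}
and the submodularity condition $g(T_1\cup\{t\})-g(T_1)\ge g(T_2\cup\{t\})-g(T_2)$ becomes, after rearranging,
\[
\max(a+\delta_t,d_3)-\max(a,d_3)\ \le\ \max(b+\delta_t,d_3)-\max(b,d_3).
\]
So the core claim reduces to the following one-dimensional fact: for $\delta\ge 0$ the function $h(s):=\max(s+\delta,d_3)-\max(s,d_3)$ is nondecreasing in $s$.

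To prove this auxiliary fact I would note that $h(s)=\phi(s+\delta)-\phi(s)$ where $\phi(s):=\max(s,d_3)$ is convex, so $h$ is a difference of a convex function and its shift by a nonnegative amount, hence nondecreasing; alternatively one can just check the three regimes $s+\delta\le d_3$ (giving $h(s)=0$), $s\ge d_3$ (giving $h(s)=\delta$), and $s<d_3<s+\delta$ (giving $h(s)=s+\delta-d_3\in[0,\delta]$ and clearly nondecreasing across the transitions). Applying this with $s=a\le b=s'$ yields $h(a)\le h(b)$, which is exactly the rearranged inequality above, completing the verification.

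I do not anticipate a genuine obstacle here; the only mild subtlety is bookkeeping — making sure the sign flip from the leading minus in $f$ reverses the submodularity inequality into a statement about the \emph{convex} function $\max(\cdot,d_3)$ rather than a concave one, and confirming that $\bm d_1\ge\bm 0$ is precisely what guarantees $a\le b$ and $\delta_t\ge 0$ (without nonnegativity the monotonicity of $\bm d_1^\top\e_T$ fails and the claim is false). One should also remark that $d_2,d_3$ are unrestricted reals and play no role beyond shifting the arguments, so no sign assumption on them is needed. I would present the argument in the compact form above: reduce to the scalar lemma, prove the scalar lemma via convexity of $\max(\cdot,d_3)$, and conclude.
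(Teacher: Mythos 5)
Your proof is correct: the reduction to the scalar fact that $h(s)=\max(s+\delta,d_3)-\max(s,d_3)$ is nondecreasing for $\delta\ge 0$ is exactly what is needed, the convexity argument (increments of a convex function over a window of fixed nonnegative length are nondecreasing in the left endpoint) is sound, and you correctly identify that $\bm d_1\ge\bm 0$ is what makes $a\le b$ and $\delta_t\ge 0$, while $d_2,d_3$ are mere shifts. The paper takes a different, much shorter route: it observes that $\bm d_1^\top\bm x+d_2$ is a nondecreasing submodular (in fact modular) function and that $t\mapsto-\max(t,d_3)$ is a nonincreasing concave function, and then cites the composition rule from Table 1 of Topkis (1978) to conclude submodularity of the composition. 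The trade-off is the usual one: the citation-based argument is a one-liner but leans on an external structural theorem, whereas your direct verification from Definition~\ref{def_subm_discrete} is self-contained, elementary, and makes transparent exactly where each hypothesis ($\bm d_1\in\Re_+^n$) is used --- and, as you note, why the statement fails without it. Either proof would be acceptable; yours is arguably more instructive for a reader unfamiliar with lattice-programming composition rules.
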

{\begin{proof} Since $\bm{d}_1^{\top}\bm{x}+d_2$ is a nondecreasing submodular function and $-\max\left(t, d_3\right)$ is \bl{a} nonincreasing concave function, the submodularity of their composition follows by Table 1 in \cite{topkis1978minimizing}.\qed
\end{proof}}
%

\begin{lemma}\label{lemmasub2}
Given $q\geq 1$, function $f(\bm{x})=\|\bm{x}\|_q$ with $q\geq 1$ is submodular over the binary hypercube.
\end{lemma}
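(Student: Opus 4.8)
The plan is to reduce the claim to a one‑dimensional concavity fact via the observation that a binary variable is idempotent under any power. Concretely, first I would note that for $\bm{x}\in\{0,1\}^n$ we have $x_\tau^q=x_\tau$ for every $\tau\in[n]$ (since $q\ge 1$ and $x_\tau\in\{0,1\}$), so on the binary hypercube
\[
\|\bm{x}\|_q=\Big(\sum_{\tau\in[n]}x_\tau^q\Big)^{1/q}=\Big(\sum_{\tau\in[n]}x_\tau\Big)^{1/q}.
\]
Thus, identifying $\bm{x}$ with its support $T=\{\tau:x_\tau=1\}$, the set function we must analyze is simply $g(T)=|T|^{1/q}$.

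Next I would verify submodularity of $g$ directly from Definition~\ref{def_subm_discrete}. Fix $T_1\subseteq T_2\subseteq[n]$ and $t\in[n]\setminus T_2$, and set $k=|T_1|$, $\ell=|T_2|$, so $k\le\ell$. The two marginal gains are
\[
g(T_1\cup\{t\})-g(T_1)=(k+1)^{1/q}-k^{1/q},\qquad
g(T_2\cup\{t\})-g(T_2)=(\ell+1)^{1/q}-\ell^{1/q}.
\]
Since $q\ge1$ the exponent $1/q\in(0,1]$, so the map $\phi(s)=s^{1/q}$ is concave on $\Re_+$; hence the difference function $s\mapsto\phi(s+1)-\phi(s)$ is nonincreasing, which gives $(k+1)^{1/q}-k^{1/q}\ge(\ell+1)^{1/q}-\ell^{1/q}$. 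This is exactly the submodularity inequality, completing the argument. Alternatively, one could quote the composition rule used for Lemma~\ref{lemmasub}: $\sum_\tau x_\tau$ is nondecreasing and modular, $\phi(s)=s^{1/q}$ is nondecreasing and concave, and by Table~1 in \cite{topkis1978minimizing} the composition $\phi\big(\sum_\tau x_\tau\big)$ is submodular over the binary hypercube — which, by the idempotency step above, coincides with $\|\bm{x}\|_q$ there.

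I do not anticipate a genuine obstacle here; the entire content is the idempotency identity $x_\tau^q=x_\tau$ on $\{0,1\}^n$, after which submodularity is the standard ``concave function of a modular function'' phenomenon. The only point to state carefully is that the reduction is valid \emph{only} on the binary hypercube (for general $\bm{x}$, $\|\bm{x}\|_q$ need not even be defined as a set function), which is precisely the setting of the lemma.
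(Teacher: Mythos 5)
Your proposal is correct and follows essentially the same route as the paper: the paper likewise uses the idempotency $x_l^q = x_l$ to write $\|\bm{x}\|_q = \sqrt[q]{\sum_{l\in[n]}x_l}$ and then invokes the standard fact that a concave function of $\e^{\top}\bm{x}$ is submodular (citing \cite{yu2017polyhedral}). The only difference is that you verify that fact directly from the definition rather than citing it, which is a harmless elaboration.
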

\begin{proof}
This is because $f(\bm{x})=\|\bm{x}\|_q=\sqrt[q]{\sum_{l\in [n]}x_l}$, and $g(\e^{\top}\bm{x})$ is a submodular function if $g(\cdot)$ is a concave function (cf., \cite{yu2017polyhedral}).
\qed
\end{proof}

Next, we will introduce polyhedral properties of submodular functions. For any given submodular function $f(\bm{x})$ with $\bm{x}\in \{0,1\}^{n}$, let us denote $\Pi_{f}$ to be its epigraph, i.e.,
\[\Pi_{f}=\left\{(\bm{x},\phi): \phi\geq f(\bm{x}), \bm{x}\in \{0,1\}^{n}\right\}.\]
Then the convex hull of $\Pi_{f}$ is characterized by the system of \textit{``extended polymatroid inequalities"} (EPI) \cite{atamturk2008polymatroids,yu2017polyhedral}, i.e.,
\begin{align}
\conv\left(\Pi_{f}\right)=\left\{(\bm{x},\phi):f(\bm{0})+\sum_{l\in [n]}\rho_{\sigma_l}x_{\sigma_l}\leq \phi, \forall \sigma \in \Omega,\bm{x}\in [0,1]^{n}\right\},\label{eq_EPI}
\end{align}
where $\Omega$ denotes a collection of all permutations of set $[n]$ and $\rho_{\sigma_l}=f(\e_{A_l^{\sigma}})-f(\e_{A_{l-1}^{\sigma}})$ for each $l\in [n]$ with $A_0^{\sigma}=\emptyset, A_l^{\sigma}=\{\sigma_1,\ldots, \sigma_l\}$ and $(\e_{T})_{\tau}=\begin{cases}
1,& \text{if }\tau\in T\\
0,& \text{if }\tau\in [n]\setminus T
\end{cases}$.

In addition, although there are $n!$ number of inequalities in \eqref{eq_EPI}, these inequalities can be easily separated by a greedy procedure. 
\begin{lemma}\label{lemmasub3}(\cite{atamturk2008polymatroids,yu2017polyhedral})
Suppose $(\tilde{\bm{x}}, \tilde{\phi})\notin\conv\left(\Pi_{f}\right)$, and $\sigma\in \Omega$ be a permutation of $[n]$ such that $\tilde{x}_{\sigma_1} \geq \ldots\geq \tilde{x}_{\sigma_n} $. Then $(\tilde{\bm{x}}, \tilde{\phi})$ must violate the constraint $f(\bm{0})+\sum_{l\in [n]}\rho_{\sigma_l}x_{\sigma_l}\leq \phi$.
\end{lemma}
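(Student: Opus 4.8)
\medskip

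The plan is to exploit the explicit description of $\conv(\Pi_f)$ in \eqref{eq_EPI} together with the greedy characterization of optimal solutions to linear programs over (extended) polymatroids. First I would observe that, since $(\tilde{\bm x},\tilde\phi)\notin \conv(\Pi_f)$ and $\tilde\phi$ appears in \eqref{eq_EPI} only through the inequalities $f(\bm 0)+\sum_{l\in[n]}\rho^\sigma_{\sigma_l}x_{\sigma_l}\le \phi$ (the box constraints $\bm x\in[0,1]^n$ do not involve $\phi$), the point must violate \emph{some} extended polymatroid inequality; it remains only to identify which one. Concretely, $(\tilde{\bm x},\tilde\phi)\notin\conv(\Pi_f)$ together with $\tilde{\bm x}\in[0,1]^n$ forces
\[
\tilde\phi < \max_{\sigma\in\Omega}\Bigl\{f(\bm 0)+\sum_{l\in[n]}\rho^\sigma_{\sigma_l}\tilde x_{\sigma_l}\Bigr\}.
\]
So the lemma reduces to showing that the maximum on the right-hand side is attained at the permutation $\sigma$ that sorts the coordinates of $\tilde{\bm x}$ in nonincreasing order, $\tilde x_{\sigma_1}\ge\cdots\ge\tilde x_{\sigma_n}$.

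\medskip

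The second step is to prove that sorting claim, which is the classical greedy result for polymatroids. Writing $\tilde\rho^\sigma$ for the vector with entries $\tilde\rho^\sigma_{\sigma_l}=f(\e_{A^\sigma_l})-f(\e_{A^\sigma_{l-1}})$, one has the telescoping identity $\sum_{l=1}^{k}\tilde\rho^\sigma_{\sigma_l}=f(\e_{A^\sigma_k})$ for every prefix, so for any permutation,
\[
f(\bm 0)+\sum_{l\in[n]}\rho^\sigma_{\sigma_l}\tilde x_{\sigma_l}
= f(\bm 0)+\sum_{k=1}^{n}\bigl(\tilde x_{\sigma_k}-\tilde x_{\sigma_{k+1}}\bigr)\,f(\e_{A^\sigma_k}),
\]
with the convention $\tilde x_{\sigma_{n+1}}:=0$ (here I use Abel summation / summation by parts). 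If $\sigma$ sorts $\tilde{\bm x}$ nonincreasingly then every coefficient $\tilde x_{\sigma_k}-\tilde x_{\sigma_{k+1}}$ is nonnegative, and each set $A^\sigma_k$ is of the form $\{t\in[n]:\tilde x_t\ge \tau_k\}$ for the threshold $\tau_k=\tilde x_{\sigma_k}$. For an \emph{arbitrary} $\sigma'$, the same summation-by-parts rewriting produces nonnegative coefficients only after one has reindexed so that the multipliers $\tilde x_{(1)}\ge\cdots\ge\tilde x_{(n)}$ are in sorted order, and the sets appearing are then arbitrary sets of the corresponding cardinalities rather than the ``upper level sets''; submodularity of $f$ (hence the fact that the vector $(f(\e_{A^\sigma_l})-f(\e_{A^\sigma_{l-1}}))_l$ lies in the base polytope of the polymatroid associated with $f$) gives $f(\e_{B})\le f(\e_{A^\sigma_k})$ whenever $|B|=k$ and $B$ is reached by adding elements in a different order — more precisely, a standard exchange argument shows the sorted permutation dominates term by term. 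I would carry out this exchange argument explicitly: given any $\sigma'$ not sorting $\tilde{\bm x}$, pick an adjacent pair $\sigma'_k,\sigma'_{k+1}$ with $\tilde x_{\sigma'_k}<\tilde x_{\sigma'_{k+1}}$, swap them, and use the submodular (diminishing-returns) inequality at that position to show the objective does not decrease; iterating bubble-sort style reaches the sorted permutation without ever decreasing the value, proving it is a maximizer.

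\medskip

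Combining the two steps: the sorted permutation $\sigma$ achieves $\max_{\sigma'\in\Omega}\{f(\bm 0)+\sum_l\rho^{\sigma'}_{\sigma'_l}\tilde x_{\sigma'_l}\}$, and this maximum strictly exceeds $\tilde\phi$ by the assumption $(\tilde{\bm x},\tilde\phi)\notin\conv(\Pi_f)$; hence $f(\bm 0)+\sum_{l\in[n]}\rho^\sigma_{\sigma_l}\tilde x_{\sigma_l}>\tilde\phi$, i.e. the stated EPI is violated. The main obstacle is the exchange/greedy argument in the second step — one must be careful that the coefficients $\rho^\sigma_{\sigma_l}$ themselves change when $\sigma$ changes (they are marginal increments along the chain determined by $\sigma$), so the comparison between two permutations is not a naive rearrangement inequality on fixed numbers but genuinely uses submodularity via the diminishing-returns property of Definition~\ref{def_subm_discrete}. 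Everything else (the telescoping identity, the reduction in step one, the box constraints not mattering) is routine. Since this is exactly Theorem/Proposition-level material from \cite{atamturk2008polymatroids,yu2017polyhedral}, I would either cite it directly or reproduce the two-line bubble-sort exchange argument for completeness.
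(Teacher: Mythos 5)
The paper does not prove this lemma at all --- it is imported verbatim from \cite{atamturk2008polymatroids,yu2017polyhedral} --- so the only question is whether your reconstruction is sound, and it is. The reduction to ``the sorted permutation maximizes the EPI left-hand side over $\Omega$'' is correct under the implicit hypothesis $\tilde{\bm{x}}\in[0,1]^n$, which you rightly flag (without it the point could exit $\conv(\Pi_f)$ through the box constraints alone and the lemma as literally stated would fail; in the branch-and-cut usage the LP relaxation guarantees it). Your adjacent-swap exchange is exactly Edmonds' greedy argument: swapping $a=\sigma_k$ and $b=\sigma_{k+1}$ with $A=A_{k-1}^{\sigma}$ changes the left-hand side by $(\tilde{x}_a-\tilde{x}_b)\bigl[f(\e_{A\cup\{a\}})+f(\e_{A\cup\{b\}})-f(\e_{A})-f(\e_{A\cup\{a,b\}})\bigr]$, and the bracket is nonnegative by Definition~\ref{def_subm_discrete}, so bubble-sorting toward the nonincreasing order never decreases the value; this is precisely where submodularity enters, as you say. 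One bookkeeping slip: the prefix sums telescope to $\sum_{l=1}^{k}\rho_{\sigma_l}=f(\e_{A_k^{\sigma}})-f(\bm{0})$, not $f(\e_{A_k^{\sigma}})$, so your summation-by-parts identity should read $f(\bm{0})(1-\tilde{x}_{\sigma_1})+\sum_{k}(\tilde{x}_{\sigma_k}-\tilde{x}_{\sigma_{k+1}})f(\e_{A_k^{\sigma}})$ rather than carrying a bare $f(\bm{0})$; since $f(\bm{0})$ need not vanish for the submodular functions used in this paper, the identity as written is wrong, but this does not affect your conclusion because the exchange argument is self-contained and is what actually proves the maximality of the sorted permutation.
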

From Lemma~\ref{lemmasub3}, we see that to separate a point $(\tilde{\bm{x}}, \tilde{\phi})$ from $\conv\left(\Pi_{f}\right)$, we only need to sort the coordinates of $\tilde{\bm{x}}$ in a descending order, i.e., $\tilde{x}_{\sigma_1} \geq \ldots\geq \tilde{x}_{\sigma_n} $. Then $(\tilde{\bm{x}}, \tilde{\phi})$ can be the separated by the constraint $f(\bm{0})+\sum_{l\in [n]}\rho_{\sigma_l}x_{\sigma_l}\leq \phi$ from $\conv\left(\Pi_{f}\right)$. The time complexity of this separating procedure is $O(n\log n)$.

\subsection{Reformulating a Binary DRCCP by Submodular Knapsack Constraints: Big-M free}
In this section, we will replace the nonlinear constraints defining the feasible region of a binary DRCCP (i.e., set $S\cap Z$) by submodular knapsack constraints. These constraints can be equivalently described by the system of EPI in \eqref{eq_EPI}. Therefore we obtain a big-M free mixed integer representation of set $S\cap Z$.

First, we introduce $n$ auxiliary variables complementing binary variables $\bm{x}$, denoted by $\bm{w}$, i.e., $w_l+x_l=1$ for each $l\in [n]$. With these $n$ additional variables, we can reformulate function $b_i(\bm{x})-\bm{a}(\bm{x})^{\top}\bm{\zeta}_i^j$ as 
\begin{align}\label{eq_reform_b_a_zeta}
b_i(\bm{x})-\bm{a}(\bm{x})^{\top}\bm{\zeta}_i^j=\bm{r}_{ij}^{\top} \bm{x}+\bm{t}_{ij}^{\top} \bm{w}+u_{ij}
\end{align}                                                                                              
for each $i\in [I], j\in [N]$ such that $\bm{r}_{ij}\in \Re_+^n, \bm{t}_{ij}\in \Re_+^n$. {Indeed, since $\bm{a}(\bm{x})= \begin{pmatrix}
\eta_1\bm{x}\\
\eta_2
\end{pmatrix}$} and $b_i(\bm{x})=\bm B_i^{\top} \bm x+b^i$, in \eqref{eq_reform_b_a_zeta}, we can choose
\begin{align*}
r_{ijl}&=B_{il}\I(B_{il}>0)-\eta_1 \zeta_{il}^j \I(\zeta_{il}^j<0), \\
t_{ijl}&=-B_{il}\I(B_{il}<0)+\eta_1 \zeta_{il}^j \I(\zeta_{il}^j>0), \\
u_{ij}&=b^i-\eta_2\bm{\zeta}_{i(n+1)}^j+\sum_{\tau\in [n]}\left(B_{i\tau}\I(B_{i\tau}<0)-\eta_1\zeta_{i\tau}^j \I(\zeta_{i\tau}^j>0)\right),
\end{align*}
for each $l\in [n],i\in [I],j\in [N]$.

Thus, from above discussion, we can formulate $S\cap Z$ (recall that set $Z=Z_1\cup Z_2$ according to Proposition~\ref{thm_reform_joint_drccp}) as the following mixed integer set with submodular knapsack constraints.
\begin{theorem}\label{thm_reform_joint_drccp_binary} 
Suppose that $S\subseteq \{0,1\}^n$. Then $S\cap \zsetd=(S\cap\hat \zsetd_{1})\cup (S\cap\zsetd_{2})$, where
	\begin{subequations}\label{zset_P_W_I_1-M_binary}
		\begin{empheq}[left={S\cap \hat\zsetd_{1}=\empheqlbrace{\bm{x}\in S:}},right=\empheqrbrace]{align}
		& \delta \nu-\epsilon\gamma \leq \frac{1}{N}\sum_{j\in [N]}z_j,\label{z_v_I_1-M_binary}\\
		&-\max\left\{\bm{r}_{ij}^{\top} \bm{x}+\bm{t}_{ij}^{\top} \bm{w}+u_{ij},0\right\} \leq -z_j-\gamma, \forall i\in [I],j\in [N],\label{z_lambda_a_b_I_1-M_binary}\\
		&z_j\leq 0, \forall j\in [N],\label{z_0_I_1-M_binary}\\
		&\left\| \begin{pmatrix}
\eta_1 \bm{x}\\\eta_2
\end{pmatrix}\right\|_* \leq \nu,\label{a_v_I_1-M_binary}\\
& w_l+x_l=1, \forall l \in [n],\\
&\nu\geq 1, \label{v_geq_1}\\
		&\gamma\geq 0,\bm{w}\in \{0,1\}^n
		\end{empheq}
	\end{subequations}
	and
	\begin{empheq}[left={S\cap \zsetd_{2}=\empheqlbrace{\bm{x}\in S:}},right=\empheqrbrace]{align}
	\bm{a}(\bm{x})=\bm0,b_i(\bm{x})\geq 0,\forall i\in [I]\label{zset_P_W_I_2-M_binary}
	\end{empheq}
\end{theorem}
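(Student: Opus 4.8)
The plan is to obtain Theorem~\ref{thm_reform_joint_drccp_binary} as a binarization of the disjunctive reformulation $Z = Z_1 \cup Z_2$ from Proposition~\ref{thm_reform_joint_drccp}. Because $\bm{x}\in\{0,1\}^n$ makes $\bm{w}:=\bm{e}-\bm{x}$ a legitimate binary vector, and because intersection with $S$ distributes over unions, it suffices to prove the two inclusions $S\cap\hat{Z}_1\subseteq S\cap Z_1$ and $S\cap Z_1\subseteq(S\cap\hat{Z}_1)\cup(S\cap Z_2)$; feeding these into $S\cap Z=(S\cap Z_1)\cup(S\cap Z_2)$ then yields $S\cap Z=(S\cap\hat{Z}_1)\cup(S\cap Z_2)$, since $\hat{Z}_1\subseteq Z_1\subseteq Z$ and $Z_2\subseteq Z$ take care of the ``$\supseteq$'' direction.

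The first preliminary step is to justify the affine rewriting~\eqref{eq_reform_b_a_zeta}. Substituting $\bm{a}(\bm{x})=(\eta_1\bm{x},\eta_2)^\top$ and $b_i(\bm{x})=\bm{B}_i^\top\bm{x}+b^i$, splitting each coefficient of $x_\tau$ into its positive and negative parts, and using $x_\tau\in\{0,1\}$ so that a $-x_\tau$ term may be traded for a $+w_\tau$ term up to a constant, one checks that the stated $\bm{r}_{ij},\bm{t}_{ij}\in\Re_+^n$ and $u_{ij}$ satisfy $b_i(\bm{x})-\bm{a}(\bm{x})^\top\bm{\zeta}_i^j=\bm{r}_{ij}^\top\bm{x}+\bm{t}_{ij}^\top\bm{w}+u_{ij}$ whenever $\bm{w}=\bm{e}-\bm{x}$. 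With this identity in hand, $S\cap\hat{Z}_1\subseteq S\cap Z_1$ is immediate: given a point of $\hat{Z}_1$, the equalities $w_l+x_l=1$ force $\bm{w}=\bm{e}-\bm{x}$, so~\eqref{z_lambda_a_b_I_1-M_binary} becomes exactly~\eqref{z_lambda_a_b_I_1-M}, the constraint~\eqref{a_v_I_1-M_binary} is literally~\eqref{a_v_I_1-M} since $(\eta_1\bm{x},\eta_2)^\top=\bm{a}(\bm{x})$, and $\nu\geq1$ supplies $\nu>0$; hence the point lies in $Z_1$.

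For the reverse inclusion, take $\bm{x}\in S\cap Z_1$, so $\bm{x}\in Z$ by Proposition~\ref{thm_reform_joint_drccp}. If $\bm{a}(\bm{x})=\bm{0}$, the argument inside the proof of that proposition shows $b_i(\bm{x})>0$ for every $i\in[I]$, so $\bm{x}\in Z_2$ and we are done. If $\bm{a}(\bm{x})\ne\bm{0}$, then $\bm{a}(\bm{x})=(\eta_1\bm{x},\eta_2)^\top$ is a nonzero $\{0,1\}$-vector, hence $\|\bm{a}(\bm{x})\|_*\geq\|\bm{a}(\bm{x})\|_\infty=1$ (using, as throughout this section, that $\|\cdot\|_*$ is an $\ell_q$-norm with $q\geq1$, the setting of Lemma~\ref{lemmasub2}); any $Z_1$-witness $(\gamma,\nu,\bm{z})$ therefore has $\nu\geq\|\bm{a}(\bm{x})\|_*\geq1$, and taking $\bm{w}=\bm{e}-\bm{x}$ and applying~\eqref{eq_reform_b_a_zeta} turns the $Z_1$-constraints into exactly the $\hat{Z}_1$-constraints~\eqref{z_v_I_1-M_binary}--\eqref{v_geq_1}, so $\bm{x}\in S\cap\hat{Z}_1$.

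There is no real obstacle here; the one genuinely delicate point is the bound $\|\bm{a}(\bm{x})\|_*\geq1$ for nonzero integral $\bm{a}(\bm{x})$, which is precisely what lets the open condition $\nu>0$ of $Z_1$ be replaced by the closed condition $\nu\geq1$ of $\hat{Z}_1$ and, in turn, is what makes $\hat{Z}_1$ a closed, big-$M$-free set (unlike $Z_1$). For completeness I would also record that the right-hand sides of~\eqref{z_lambda_a_b_I_1-M_binary} and~\eqref{a_v_I_1-M_binary}, viewed as functions of the binary vector $(\bm{x},\bm{w})$, are submodular by Lemmas~\ref{lemmasub} and~\ref{lemmasub2} respectively (for the latter, writing $\|(\eta_1\bm{x},\eta_2)\|_q$ as a concave function of $\bm{e}^\top\bm{x}$ on $\{0,1\}^n$), so that each can be described exactly by the extended polymatroid inequalities~\eqref{eq_EPI} with no big-$M$ coefficients and no auxiliary binary variables; this is the point of the reformulation, though not needed for the set identity itself.
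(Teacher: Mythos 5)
Your proposal is correct and follows essentially the same route as the paper: reduce to Proposition~\ref{thm_reform_joint_drccp} via the rewriting \eqref{eq_reform_b_a_zeta} and $\bm{w}=\e-\bm{x}$, then observe that for binary $\bm{x}$ either $\bm{a}(\bm{x})=\bm{0}$ (in which case the pigeonhole-type argument from Proposition~\ref{thm_reform_joint_drccp} places $\bm{x}$ in $Z_2$) or $\bm{a}(\bm{x})$ is a nonzero $\{0,1\}$-vector so $\|\bm{a}(\bm{x})\|_*\geq 1$, letting $\nu>0$ be tightened to $\nu\geq 1$. The paper organizes the case split by the value of $\eta_2$ rather than by whether $\bm{a}(\bm{x})=\bm{0}$, but the substance is identical.
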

\begin{proof}According to Proposition~\ref{thm_reform_joint_drccp}, equalities \eqref{eq_reform_b_a_zeta} and the fact that $\bm{a}(\bm{x})=  \begin{pmatrix}
\eta_1 \bm{x}\\\eta_2
\end{pmatrix}$ with constant $\eta_1,\eta_2\in \{0,1\}$, constraints \eqref{z_lambda_a_b_I_1-M} and \eqref{a_v_I_1-M} {are} equivalent to \eqref{z_lambda_a_b_I_1-M_binary} and \eqref{a_v_I_1-M_binary}. Thus, we only need to show that $S\cap Z_1\subseteq(S\cap\hat \zsetd_{1})\cup (S\cap\zsetd_{2})$. 
There are two cases.
\begin{enumerate}[{Case} 1.]
\item If $\eta_2=1$, then we must have  $\|\bm{a}(\bm{x})\|_*=\left\| \begin{pmatrix}
\eta_1 \bm{x}\\\eta_2
\end{pmatrix}\right\|_*\geq 1$, then $S\cap Z_1=S\cap\hat{Z}_1$. We are done. 

\item If $\eta_2=0$, then we must have $\eta_1=1$. For any $\bm{x}\in S\cap Z_1 $, we need to show that $\bm{x}\in (S\cap\hat \zsetd_{1})\cup (S\cap\zsetd_{2})$. If $\bm{x}=0$, then the constraints \eqref{zset_P_W_I_1-M} become
\begin{align*}
		& \delta \nu\leq \frac{1}{N}\sum_{j\in [N]}z_j+\epsilon\gamma ,\\
		&z_j+\gamma\leq \max\left\{b_i(\bm{x}),0\right\}, \forall i\in [I],j\in [N],\\
		&z_j\leq 0, \forall j\in [N],\\
		&\nu>0,\gamma\geq 0.
\end{align*}
Since $\nu>0,\delta>0,1>\epsilon>0$, thus by the pigeonhole principle, we must have $z_{j_0}+\gamma>0$ for some $j_0\in [N]$. This implies that
$b_i(\bm{x})>0$ for each $i\in [I]$. Together with $\bm{a}(\bm{x})=\bm0$, we must have $\bm{x}=0\in S\cap Z_2$.

Now suppose that $\bm{x}\neq 0$. Note that $S\cap Z_1\subseteq \{0,1\}^n$, therefore, $\bm{x}\neq 0$ implies that $\|\bm{x}\|_*\geq 1$, thus, $v\geq \left\| \begin{pmatrix}
\eta_1 \bm{x}\\\eta_2
\end{pmatrix}\right\|_*=\|\bm{x}\|_* \geq 1$. Thus, $\bm{x}\in S\cap \hat{Z}_1$.\qed
\end{enumerate}
\end{proof}

From the proof of Theorem~\ref{thm_reform_joint_drccp_binary}, we note that if $b^i\geq \frac{\delta}{\epsilon}$ for each $i\in [I]$, then we have $S\cap Z_2\subseteq S\cap \hat Z_1$. Thus, $S\cap Z=S\cap \hat Z_1$.
\begin{corollary}\label{cor_reform_joint_drccp_binary1} 
Suppose that $S\subseteq \{0,1\}^n$ and $b^i\geq \frac{\delta}{\epsilon}$ for each $i\in [I]$. Then $ S\cap Z=S\cap \hat Z_1$.
\end{corollary}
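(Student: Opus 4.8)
The plan is to leverage the decomposition already established in Theorem~\ref{thm_reform_joint_drccp_binary}, namely $S\cap\zsetd=(S\cap\hat\zsetd_{1})\cup(S\cap\zsetd_{2})$. Since the right-hand side trivially contains $S\cap\hat\zsetd_{1}$, it suffices to prove the reverse inclusion for the second piece, i.e. to show $S\cap\zsetd_{2}\subseteq S\cap\hat\zsetd_{1}$ under the extra hypothesis $b^i\geq\delta/\epsilon$ for all $i\in[I]$. Combining this with the trivial inclusion then yields $S\cap\zsetd=S\cap\hat\zsetd_{1}$.

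First I would unpack $\zsetd_{2}$ as defined in \eqref{zset_P_W_I_2-M_binary}. The requirement $\bm{a}(\bm{x})=\bm{0}$, together with the structural form $\bm{a}(\bm{x})=(\eta_1\bm{x}^{\top},\eta_2)^{\top}$ and the standing conditions $\eta_1,\eta_2\in\{0,1\}$, $\eta_1+\eta_2\geq 1$, forces $\eta_2=0$, $\eta_1=1$, and $\bm{x}=\bm{0}$; in particular $\zsetd_2$ can be nonempty only in the left-hand-uncertainty case, and then $b_i(\bm{x})=b_i(\bm{0})=b^i$ for every $i\in[I]$. Hence any $\bm{x}\in S\cap\zsetd_2$ must equal $\bm{0}$, and if $\bm{0}\notin S$ the desired inclusion holds vacuously, so we may assume $\bm{0}\in S$.

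It then remains to certify that $\bm{0}\in S\cap\hat\zsetd_1$. I would exhibit the explicit witness $\bm{x}=\bm{0}$, $\bm{w}=\e$ (forced by $w_l+x_l=1$), $\nu=1$, $\gamma=\delta/\epsilon$, and $z_j=0$ for all $j\in[N]$, and verify each constraint of \eqref{zset_P_W_I_1-M_binary}. Using the identity \eqref{eq_reform_b_a_zeta} one has $\bm{r}_{ij}^{\top}\bm{x}+\bm{t}_{ij}^{\top}\bm{w}+u_{ij}=b_i(\bm{0})-\bm{a}(\bm{0})^{\top}\bm{\zeta}_i^j=b^i$, so \eqref{z_lambda_a_b_I_1-M_binary} reads $-\max\{b^i,0\}\leq-\gamma$, i.e. $\delta/\epsilon=\gamma\leq b^i$, which is exactly the hypothesis. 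Constraint \eqref{z_v_I_1-M_binary} becomes $\delta\cdot 1-\epsilon\cdot(\delta/\epsilon)=0\leq\frac1N\sum_{j\in[N]}z_j=0$; \eqref{z_0_I_1-M_binary} and the sign/integrality conditions hold by construction; $\|(\eta_1\bm{0}^{\top},\eta_2)^{\top}\|_*=0\leq\nu=1$ gives \eqref{a_v_I_1-M_binary}; and $\nu=1\geq1$ gives \eqref{v_geq_1}. Thus $\bm{0}\in S\cap\hat\zsetd_1$, completing $S\cap\zsetd_2\subseteq S\cap\hat\zsetd_1$ and hence the corollary.

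There is no genuinely hard step; the only care needed is the boundary bookkeeping — recognizing that $\zsetd_2$ is nonempty only when $\eta_2=0,\eta_1=1$, so its sole candidate point is the origin, and that the clause $\nu\geq1$ of \eqref{v_geq_1} is compatible with $\bm{x}=\bm{0}$ precisely because $\|\bm{a}(\bm{0})\|_*=0\leq1$. The hypothesis $b^i\geq\delta/\epsilon$ is invoked in exactly one place, to make the choice $\gamma=\delta/\epsilon$ feasible in \eqref{z_lambda_a_b_I_1-M_binary}.
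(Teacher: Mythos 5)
Your proof is correct and follows essentially the same route as the paper: reduce to showing $S\cap Z_2\subseteq S\cap\hat Z_1$, observe that $Z_2$ can only contain $\bm{x}=\bm{0}$ (forcing $\eta_2=0$, $\eta_1=1$), and certify $\bm{0}\in S\cap\hat Z_1$ with the witness $\nu=1$, $\gamma=\delta/\epsilon$, $\bm{z}=\bm{0}$, $\bm{w}=\e$, using $b^i\geq\delta/\epsilon$ exactly where the paper does. Your write-up is in fact slightly more careful than the paper's, since it makes explicit the vacuous case $\bm{0}\notin S$ and the constraint-by-constraint verification that the paper leaves as ``easy to see.''
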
 
\begin{proof}
{From the proof of Theorem~\ref{thm_reform_joint_drccp_binary}, we only need to show that $\bm{x}=\bm0\in S\cap \hat Z_1$. In this case, we have $\bm{w}=\e-\bm{x}=\e$. Then according to \eqref{eq_reform_b_a_zeta}, we have $\bm{r}_{ij}^{\top} \bm{x}+\bm{t}_{ij}^{\top} \bm{w}+u_{ij}=b_i(\bm{x})-\bm{a}(\bm{x})^{\top}\bm{\zeta}_i^j=b^i$. Let us set $\nu=1,\gamma=\frac{\delta}{\epsilon}, \bm{z}=\bm0$. Then it is easy to see that $(\bm{x},\bm{w},\bm{z},\gamma,\nu)$ satisfies the constraints in \eqref{zset_P_W_I_1-M_binary}, i.e., $\bm 0\in S\cap \hat Z_1$.}
\qed
\end{proof}

We note that the left-hand sides of constraints \eqref{z_lambda_a_b_I_1-M_binary} and \eqref{a_v_I_1-M_binary} are submodular functions according to Lemma~\ref{lemmasub} and Lemma~\ref{lemmasub2}. Therefore, equivalently, we can replace these constraints with the convex hulls of epigraphs of their associated submodular functions. Thus, \bl{we arrive at the following equivalent representation of set $S\cap \hat{\zsetd}_{1}$.}
\begin{corollary}\label{cor_reform_joint_drccp_binary} 
Suppose that $S\subseteq \{0,1\}^n$ and $\|\cdot\|$ is $L_p$ norm with $p\geq 1$. Then 
	\begin{subequations}\label{cor_zset_P_W_I_1-M_binary}
		\begin{empheq}[left={S\cap \hat{\zsetd}_{1}=\empheqlbrace{\bm{x}\in S:}},right=\empheqrbrace]{align}
		& \delta \nu-\epsilon\gamma \leq \frac{1}{N}\sum_{j\in [N]}z_j,\label{cor_z_v_I_1-M_binary}\\
		&(\bm{x},\bm{w}, -z_j-\gamma) \in \conv(\Pi_{ij}), \forall i\in [I],j\in [N],\label{cor_z_lambda_a_b_I_1-M_binary}\\
		&z_j\leq 0, \forall j\in [N],\label{cor_z_0_I_1-M_binary}\\
		&(\bm{x},\nu) \in \conv(\Pi_{0}) ,\label{cor_a_v_I_1-M_binary}\\
& w_l+x_l=1, \forall l \in [n],\\
		&\nu\geq 1,\gamma\geq 0,\bm{w}\in [0,1]^n,
		\end{empheq}
	\end{subequations}
where
\begin{subequations}
\begin{align}
\Pi_{ij}&=\left\{(\bm{x},\bm{w}, \phi): -\max\left\{\bm{r}_{ij}^{\top} \bm{x}+\bm{t}_{ij}^{\top} \bm{w}+u_{ij},0\right\} \leq \phi, \bm{x}, \bm{w}\in \{0,1\}^n\right\},\forall i\in [I],j\in [N], \\
\Pi_{0}&{=\left\{(\bm{x},\phi): \left\| \begin{pmatrix}
\eta_1 \bm{x}\\\eta_2
\end{pmatrix}\right\|_* \leq \phi, \bm{x}\in \{0,1\}^n\right\}}
\end{align}
and $\left\{\conv(\Pi_{ij})\right\}_{i\in [I],j\in [N]}, \conv(\Pi_{0})$ can be described by the system of EPI in \eqref{eq_EPI}.
	\end{subequations}
\end{corollary}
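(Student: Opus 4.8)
The plan is to obtain \eqref{cor_zset_P_W_I_1-M_binary} from the mixed-integer description of $S\cap\hat{\zsetd}_{1}$ in Theorem~\ref{thm_reform_joint_drccp_binary} in three steps: (i) recognize that the only non-polyhedral constraints there, namely \eqref{z_lambda_a_b_I_1-M_binary} and \eqref{a_v_I_1-M_binary}, are epigraph constraints of submodular functions on a binary hypercube; (ii) invoke the extended polymatroid description \eqref{eq_EPI} of the convex hull of such an epigraph; and (iii) verify that, because $\bm x$ ranges over $S\subseteq\{0,1\}^{n}$ and hence $\bm w=\e-\bm x\in\{0,1\}^{n}$, replacing each epigraph by its convex hull and relaxing $\bm w\in\{0,1\}^{n}$ to $\bm w\in[0,1]^{n}$ does not change the set of feasible $\bm x$.

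For step~(i) and the submodularity claim I would argue as follows. Fix $i\in[I]$ and $j\in[N]$ and regard $(\bm x,\bm w)$ as a single binary vector in $\{0,1\}^{2n}$. By the choice of $\bm r_{ij},\bm t_{ij},u_{ij}$ in \eqref{eq_reform_b_a_zeta} we have $\bm r_{ij},\bm t_{ij}\in\Re_+^{n}$, so Lemma~\ref{lemmasub}, applied with coefficient vector $(\bm r_{ij};\bm t_{ij})\in\Re_+^{2n}$, $d_{2}=u_{ij}$, and $d_{3}=0$, shows that $(\bm x,\bm w)\mapsto-\max\{\bm r_{ij}^{\top}\bm x+\bm t_{ij}^{\top}\bm w+u_{ij},0\}$ is submodular on $\{0,1\}^{2n}$ and that $\Pi_{ij}$ is exactly its epigraph. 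For the dual-norm constraint, since $\|\cdot\|$ is the $\ell_p$ norm with $p\ge1$, its dual $\|\cdot\|_*$ is the $\ell_q$ norm with $1/p+1/q=1$ and $q\ge1$; on $\{0,1\}^{n}$ one has $x_{\tau}^{q}=x_{\tau}$ and $\eta_{1}^{q}=\eta_{1},\eta_{2}^{q}=\eta_{2}$, so $\|(\eta_{1}\bm x;\eta_{2})\|_{*}=(\eta_{1}\,\e^{\top}\bm x+\eta_{2})^{1/q}$ (with the usual convention for $q=\infty$), which, for every admissible pair $(\eta_{1},\eta_{2})$, is a concave nondecreasing---possibly constant---function of $\e^{\top}\bm x$. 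By the same reasoning as in the proof of Lemma~\ref{lemmasub2} this function is submodular on $\{0,1\}^{n}$ and $\Pi_{0}$ is its epigraph. Step~(ii) is then immediate: applying \eqref{eq_EPI} shows that $\conv(\Pi_{ij})$ for each $i,j$ and $\conv(\Pi_{0})$ are precisely the polyhedra cut out by the corresponding systems of EPI, which is the last assertion of the corollary.

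For step~(iii), the key observation is that for a binary point $(\bm x,\bm w)\in\{0,1\}^{2n}$---i.e., a vertex of $[0,1]^{2n}$---one has $\{\phi:(\bm x,\bm w,\phi)\in\conv(\Pi_{ij})\}=\{\phi:(\bm x,\bm w,\phi)\in\Pi_{ij}\}$: such a vertex can appear in a convex combination of points of $\Pi_{ij}$ only as a combination of points sharing its $(\bm x,\bm w)$-component, so its $\phi$-coordinate is a convex combination of reals each at least $-\max\{\bm r_{ij}^{\top}\bm x+\bm t_{ij}^{\top}\bm w+u_{ij},0\}$, hence itself at least that value; the reverse inclusion is trivial. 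The identical remark applies to $\Pi_{0}$. Since \eqref{cor_zset_P_W_I_1-M_binary} still imposes $\bm x\in S\subseteq\{0,1\}^{n}$ together with $w_{l}+x_{l}=1$, every feasible $(\bm x,\bm w)$ is binary, so \eqref{cor_z_lambda_a_b_I_1-M_binary} is equivalent to \eqref{z_lambda_a_b_I_1-M_binary} and \eqref{cor_a_v_I_1-M_binary} is equivalent to \eqref{a_v_I_1-M_binary}; all remaining constraints of \eqref{zset_P_W_I_1-M_binary} and \eqref{cor_zset_P_W_I_1-M_binary} coincide verbatim, and combining with Theorem~\ref{thm_reform_joint_drccp_binary} yields $S\cap\hat{\zsetd}_{1}$ equal to \eqref{cor_zset_P_W_I_1-M_binary}.

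I expect the only delicate point to be confirming the submodularity of the dual-norm term uniformly in the data: one has to check that the reduction to a concave function of $\e^{\top}\bm x$ really does hold for all admissible $(\eta_{1},\eta_{2})$ with $\eta_{1}+\eta_{2}\ge1$ (in particular the degenerate constant cases) and at the boundary exponent $p=1$, i.e., $q=\infty$. The vertex argument in step~(iii) and the appeals to Lemmas~\ref{lemmasub}, \ref{lemmasub2} and to \eqref{eq_EPI} are routine.
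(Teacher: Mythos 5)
Your proposal is correct and follows essentially the same route the paper takes, which is only sketched in the remark preceding the corollary: identify \eqref{z_lambda_a_b_I_1-M_binary} and \eqref{a_v_I_1-M_binary} as epigraph constraints of submodular functions via Lemma~\ref{lemmasub} (applied to the concatenated nonnegative vector $(\bm r_{ij};\bm t_{ij})$ on $\{0,1\}^{2n}$) and the concave-of-$\e^{\top}\bm x$ argument behind Lemma~\ref{lemmasub2}, then pass to $\conv(\Pi_{ij})$ and $\conv(\Pi_{0})$ described by \eqref{eq_EPI}. Your step~(iii), observing that at a hypercube vertex the $\phi$-section of $\conv(\Pi_{ij})$ coincides with that of $\Pi_{ij}$ (and likewise for $\Pi_0$), so that the relaxation does not enlarge the set of feasible $\bm x$, correctly supplies the justification the paper leaves implicit.
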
 

Note that the optimization problem $\min_{\bm{x}\in S\cap \zsetd_{1}}\bm{c}^{\top}\bm{x}$
can be solved by a branch and cut algorithm. In particular, at each branch and bound node, denoted as $(\hat{\bm{x}},\hat{\bm{w}},\hat{\bm{z}},\hat\gamma,\hat\nu)$, there might be too many (i.e., $N\times I+1$) valid inequalities to add, since in \eqref{cor_z_lambda_a_b_I_1-M_binary} and \eqref{cor_a_v_I_1-M_binary}, there are $N\times I+1$ convex hulls of epigraphs (i.e., $\left\{\conv(\Pi_{ij})\right\}_{i\in [I],j\in [N]}, \conv(\Pi_{0})$) to be separated from. Therefore, instead, we can first check and find the epigraphs of $\kappa$ ({e.g., $\kappa=10$} in our numerical study) most violated constraints in \eqref{z_lambda_a_b_I_1-M_binary} and \eqref{a_v_I_1-M_binary}, i.e., find the epigraphs corresponding to the $\kappa$ largest values in the following set
$$\left\{-\max\left\{\bm{r}_{ij}^{\top}\hat{ \bm{x}}+\bm{t}_{ij}^{\top} \hat{\bm{w}}+u_{ij},0\right\} +\hat z_j+\hat\gamma\right\}_{i\in [I],j\in [N]} \bigcup\left\{\left\| \begin{pmatrix}
\eta_1 \hat{\bm{x}}\\\eta_2
\end{pmatrix}\right\|_*- \hat{\nu}\right\}.$$ Finally, we can generate and add valid inequalities by separating $(\hat{\bm{x}},\hat{\bm{w}},\hat{\bm{z}},\hat\gamma,\hat\nu)$ from the convex hulls of these $\kappa$ epigraphs according to Lemma~\ref{lemmasub3}.

{
\section{Numerical Demonstration}\label{sec_sep_numerical}

In this section, we present a series of numerical studies to demonstrate the effectiveness of the proposed formulations and also show how to use cross validation to choose a proper Wasserstein radius $\delta$. 

For the demonstration purpose, we will study 
distributionally robust multidimensional knapsack problem (DRMKP) \cite{cheng2014distributionally,song14chance,Xie2016drccp} with continuous decision variables (i.e., continuous DRMKP) or binary decision variables (i.e., binary DRMKP). In a DRMKP, there are $n$ items and $I$ knapsacks. Additionally, $c_j$ represents the value of item $j$ for all $j \in [n]$, $\tilde\bxi_i := (\tilde\xi_{i1}, \ldots, \tilde\xi_{in})^{\top}$ represents the vector of random item weights in knapsack $i$, and $b^i>0$ represents the capacity limit of knapsack $i$, for all $i \in [I]$. The decision variable $x_j \in [0,1]$ represents the proportion of $j$th item to be picked. In a continuous DRMKP, we let $\bm{x}\in S:=[0,1]^n$, and for a binary DRMKP, we let $\bm{x}\in S:=\{0,1\}^n$. We use the Wasserstein ambiguity set under Assumptions \ref{assume_A1} and \ref{assume_A2} with $L_2$- norm as distance metric. With the notation above, DRMKP is formulated as
\begin{align}
v^* = \max_{\bm x\in S} \quad &\bm c^{\top}\bm x,\notag\\
\rm{s.t.}\quad&\inf_{\Pr\in \P}\Pr\left\{\tilde\rxi_i^{\top}\bm x\leq b^i,{\forall i\in [I]}\right\}\ge 1-\epsilon,\label{eq_drccp_con_mdk}
\end{align}
where the chance constraint here is to guarantee that the worst-case probability that each knapsack's capacity should be satisfied is at least $1-\epsilon$.

In the following subsections, we generated different random instances to test the proposed formulations. All the instances were executed on a MacBook Pro with a 2.80 GHz processor and 16GB RAM with a call of the commercial solver Gurobi (version 7.5, with default settings). \bl{We set the time limit of solving each instance to be 3600 seconds.}

\subsection{Continuous DRMKP: Numerical Demonstration of Exact Formulation, Outer and Inner Approximations}

In this subsection, we use continuous DRMKP (i.e., $S=[0,1]^n$ in \eqref{eq_drccp_con_mdk}) to numerically demonstrate the exact formulation in Theorem~\ref{cor_reform_multi_drccp}, outer approximation in Corollary~\ref{cor_reform_multi_drccp_var}, $\CVaR$ approximation in Theorem~\ref{thm_cvar} and inner chance constrained programming approximation in Corollary~\ref{ref_math_thm_inner_2_cor1}. To test the proposed formulations, we generated 10 random instances with $n=20$ and $I=10$, indexed by $\{1,2,\ldots,10\}$. For each instance, we generate \bl{$N\in \{100,1000\}$} empirical samples $\{\bm \zeta^j\}_{j\in [N]}\in \Re_+^{I\times n}$ from a uniform distribution over a box $[1,10]^{I\times n}$. For each $l \in [n]$, we independently generated $c_l$ from the uniform distribution on the interval $[1, 10]$, while for each $i \in [I]$, we set $b^i := 50$. We tested these 10 random instances with risk parameter $\epsilon \in \{0.05,0.10\}$ and Wasserstein radius $\delta\in \{0.01,0.02\}$.

The numerical results \bl{with sample size $N=100$} are displayed in Table~\ref{tab:my-table}, where we use BigM Model, $\VaR$ Model, $\CVaR$ Model and ICCP Model denote exact formulation in Theorem~\ref{cor_reform_multi_drccp}, outer approximation in Corollary~\ref{cor_reform_multi_drccp_var}, $\CVaR$ approximation in Theorem~\ref{thm_cvar} and inner chance constrained programming approximation in Corollary~\ref{ref_math_thm_inner_2_cor1}, respectively. We also use ``Opt.Val" to denote the optimal value $v^*$, ``Value" to denote the best objective value output from an approximation model and ``Time" to denote the computational time in seconds. Additionally, since we can solve exact BigM Model to the optimality, we use GAP denote the optimality gap of an approximation model, which is computed as
\[\text{GAP}=\frac{|\text{Value}-\text{Opt.Val}|}{\text{Opt.Val}}.\]
We also let $\alpha^*$ denote the best $\alpha$ found in ICCP Model. In BigM Model \eqref{zset_P_W_I_1+_bigM}, we chose 
a lower bound of $\nu$ as $\underline{\nu}=1$. \bl{We chosen the big-M coefficients in BigM Model, $\VaR$ Model, and ICCP Model according to the remarks after Theorem~\ref{cor_reform_multi_drccp}, Corollary~\ref{cor_reform_multi_drccp_var}, and Corollary~\ref{ref_math_thm_inner_2_cor1}, respectively, where $\bm{L}=\bm0,\bm{U}=\e$.} From Table~\ref{tab:my-table}, we see that all the models can be solved to the optimality within 2 minutes, where BigM Model and ICCP Model often take the longest time to solve, and for each instance, $\CVaR$ Model can be solved within a second. This might be because (i) $\CVaR$ Model is a second order conic program and does not involve any binary variables; (ii) on the contrary, the BigM Model not only has binary variables but also involves the most number of auxiliary variables, while to solve ICCP Model, one needs to solve $\lceil N\epsilon\rceil$ regular chance constrained programs. In terms of approximation accuracy, we see that VaR Model is usually 2-3\% away from the true optimality, $\CVaR$ Model is 1-2\% away from the true optimality, while ICCP Model nearly finds the true optimal solution. This demonstrates that all of the proposed approximation models can find near-optimal solutions.

\bl{ The numerical results \bl{with sample size $N=1000$} are displayed in Table~\ref{tab:my-table11}, where similarly, we use BigM Model, $\VaR$ Model, $\CVaR$ Model and ICCP Model denote exact formulation in Theorem~\ref{cor_reform_multi_drccp}, outer approximation in Corollary~\ref{cor_reform_multi_drccp_var}, $\CVaR$ approximation in Theorem~\ref{thm_cvar} and inner chance constrained programming approximation in Corollary~\ref{ref_math_thm_inner_2_cor1}, respectively. We use ``UB" to denote the best upper bound found by BigM Model or $\VaR$ Model, ``LB" to denote the best lower bound found by BigM Model, $\CVaR$ Model, or ICCP Model, and ``Time" to denote the computational time in seconds. Additionally, since we cannot solve the BigM Model to optimality, we use GAP denote its optimality gap, which is computed as
\[\text{GAP}=\frac{|\text{UB}-\text{LB}|}{\text{LB}}.\]
To evaluate the effectiveness of approximation models, we use Improvement to denote the percentage of differences between the bounds of approximation models and bounds of BigM Model, i.e., for the $\VaR$ Model,
\[\text{Improvement}=1-\frac{\text{UB of $\VaR$ Model}}{\text{UB of BigM Model}},\]
while for the $\CVaR$ Model or ICCP Model, 
\[\text{Improvement}=\frac{\text{LB of Approximation Model}}{\text{LB of BigM Model}}-1,\]
where Approximation Model here is either $\CVaR$ Model or ICCP Model. We found that ICCP Model is difficult to solve these instances to optimality, and thus we chose a particular $\alpha=\frac{\epsilon}{2}$ in ICCP Model. Similarly, in BigM Model \eqref{zset_P_W_I_1+_bigM}, we chose 
a lower bound of $\nu$ as $\underline{\nu}=1$, and the big-M coefficients in BigM Model, $\VaR$ Model, and ICCP Model were computed according to the remarks after Theorem~\ref{cor_reform_multi_drccp}, Corollary~\ref{cor_reform_multi_drccp_var}, and Corollary~\ref{ref_math_thm_inner_2_cor1}, respectively, where $\bm{L}=\bm 0,\bm{U}=\e$. From Table~\ref{tab:my-table11}, we see that $\CVaR$ Model can be solved to optimality within 2 seconds, while all the other models cannot be solved within the time limit. In terms of approximation accuracy, we see that VaR Model consistently provides better upper bounds and can help close more 10\% optimality gap on average compared to BigM Model, $\CVaR$ Model often provides slightly better feasible solutions than BigM Model, while, ICCP Model yields the best feasible solutions. This demonstrates that all of the proposed approximation models are useful, to some extent, to improve the exact bigM model. In particular, $\VaR$ Model provides a better upper bound, which helps evaluate the solution quality more accurately, $\CVaR$ Model and ICCP Model often provide better feasible solutions. Also, we notice that mixed integer programs $\VaR$ Model and ICCP Model outperform BigM Model, which might be because (i) the BigM Model requires more auxiliary variables than $\VaR$ Model or ICCP Model; (ii) the naive big-M coefficients of the BigM Model are typically larger than the other two. }
In practice, it is worthy of trying all the BigM Model, ICCP Model, and CVaR Model first, then choose the best solution from three models and use the outer approximation- VaR Model to provide a numerical optimality guarantee on how good the solution quality is.

\begin{table}[htbp]\scriptsize
\caption{Numerical results of the exact formulation in Theorem~\ref{cor_reform_multi_drccp}, outer approximation in Corollary~\ref{cor_reform_multi_drccp_var}, $\CVaR$ approximation in Theorem~\ref{thm_cvar} and inner chance constrained programming approximation in Corollary~\ref{ref_math_thm_inner_2_cor1} on solving continuous DRMKP when sample size $N=1000$.}
\label{tab:my-table}
\begin{tabular}{ccc|cc|ccc|ccc|cccc}
\hline
\multirow{2}{*}{$\epsilon$} & \multirow{2}{*}{$\delta$} & \multirow{2}{*}{\begin{tabular}[c]{@{}c@{}}Insta-\\ nces\end{tabular}} & \multicolumn{2}{c|}{BigM Model} & \multicolumn{3}{c|}{$\VaR$ Model} & \multicolumn{3}{c|}{$\CVaR$ Model} & \multicolumn{4}{c}{ICCP Model} \\ \cline{4-15} 
 &  &  & Opt.Val & Time & Value & GAP & Time & Value & GAP & Time & Value & $\alpha^*$ & GAP & Time \\ \hline
\multirow{10}{*}{0.05} & \multirow{10}{*}{0.01} & 1 & 54.93 & 6.11 & 56.37 & 2.62\% & 3.37 & 54.30 & 1.14\% & 0.06 & 54.93 & 0.03 & 0.00\% & 9.43 \\
 &  & 2 & 47.69 & 5.24 & 48.79 & 2.29\% & 2.04 & 47.16 & 1.11\% & 0.05 & 47.69 & 0.03 & 0.00\% & 7.90 \\
 &  & 3 & 50.73 & 4.44 & 51.43 & 1.38\% & 4.43 & 50.38 & 0.70\% & 0.05 & 50.73 & 0.02 & 0.00\% & 8.64 \\
 &  & 4 & 53.97 & 3.61 & 54.98 & 1.87\% & 4.75 & 52.72 & 2.32\% & 0.06 & 53.97 & 0.03 & 0.00\% & 8.16 \\
 &  & 5 & 54.96 & 6.99 & 56.44 & 2.68\% & 4.20 & 52.88 & 3.79\% & 0.05 & 54.96 & 0.03 & 0.00\% & 7.42 \\
 &  & 6 & 56.03 & 6.46 & 57.40 & 2.44\% & 2.64 & 54.97 & 1.89\% & 0.05 & 56.03 & 0.03 & 0.00\% & 6.35 \\
 &  & 7 & 54.17 & 6.69 & 55.04 & 1.62\% & 3.68 & 53.26 & 1.67\% & 0.05 & 54.12 & 0.02 & 0.08\% & 7.92 \\
 &  & 8 & 55.40 & 5.81 & 56.55 & 2.09\% & 3.19 & 54.15 & 2.26\% & 0.05 & 55.40 & 0.03 & 0.00\% & 6.86 \\
 &  & 9 & 57.63 & 4.91 & 58.95 & 2.29\% & 4.20 & 57.07 & 0.96\% & 0.05 & 57.62 & 0.02 & 0.02\% & 10.80 \\
 &  & 10 & 56.31 & 4.34 & 57.15 & 1.50\% & 4.71 & 55.95 & 0.63\% & 0.06 & 56.31 & 0.02 & 0.00\% & 8.62 \\ \hline
\multicolumn{3}{c|}{Average} &  & 5.46 &  & 2.08\% & 3.72 &  & 1.65\% & 0.05 &  &  & 0.01\% & 8.21 \\ \hline\hline
\multirow{10}{*}{0.05} & \multirow{10}{*}{0.02} & 1 & 53.97 & 3.94 & 55.92 & 3.63\% & 3.27 & 53.83 & 0.24\% & 0.05 & 53.94 & 0.02 & 0.05\% & 9.95 \\
 &  & 2 & 47.05 & 3.63 & 48.42 & 2.92\% & 3.20 & 46.79 & 0.53\% & 0.04 & 47.04 & 0.02 & 0.01\% & 8.64 \\
 &  & 3 & 50.12 & 5.26 & 51.02 & 1.79\% & 4.48 & 49.96 & 0.33\% & 0.05 & 50.11 & 0.01 & 0.03\% & 8.88 \\
 &  & 4 & 52.98 & 5.14 & 54.49 & 2.84\% & 4.83 & 52.28 & 1.33\% & 0.06 & 52.98 & 0.02 & 0.00\% & 9.41 \\
 &  & 5 & 54.10 & 3.76 & 55.95 & 3.41\% & 3.67 & 52.44 & 3.07\% & 0.05 & 54.05 & 0.02 & 0.09\% & 9.55 \\
 &  & 6 & 55.16 & 6.02 & 56.90 & 3.16\% & 3.33 & 54.52 & 1.17\% & 0.05 & 55.14 & 0.02 & 0.04\% & 7.58 \\
 &  & 7 & 53.41 & 3.91 & 54.55 & 2.13\% & 3.81 & 52.83 & 1.08\% & 0.05 & 53.38 & 0.02 & 0.06\% & 7.59 \\
 &  & 8 & 54.47 & 2.77 & 56.09 & 2.98\% & 3.34 & 53.71 & 1.39\% & 0.06 & 54.43 & 0.02 & 0.07\% & 6.63 \\
 &  & 9 & 56.85 & 3.40 & 58.44 & 2.79\% & 4.00 & 56.59 & 0.46\% & 0.05 & 56.84 & 0.01 & 0.02\% & 9.39 \\
 &  & 10 & 55.65 & 5.47 & 56.71 & 1.90\% & 4.90 & 55.53 & 0.22\% & 0.06 & 55.65 & 0.01 & 0.00\% & 9.29 \\ \hline
\multicolumn{3}{c|}{Average} &  & 4.33 &  & 2.76\% & 3.88 &  & 0.98\% & 0.05 &  &  & 0.04\% & 8.69 \\ \hline\hline
\multirow{10}{*}{0.1} & \multirow{10}{*}{0.01} & 1 & 56.47 & 25.78 & 57.71 & 2.19\% & 10.01 & 55.14 & 2.36\% & 0.05 & 56.47 & 0.06 & 0.00\% & 35.61 \\
 &  & 2 & 48.82 & 66.63 & 49.87 & 2.16\% & 6.51 & 48.00 & 1.68\% & 0.06 & 48.79 & 0.06 & 0.06\% & 25.94 \\
 &  & 3 & 51.58 & 102.52 & 52.56 & 1.89\% & 10.35 & 50.93 & 1.26\% & 0.06 & 51.58 & 0.07 & 0.00\% & 54.93 \\
 &  & 4 & 55.28 & 15.06 & 56.20 & 1.66\% & 8.26 & 53.97 & 2.37\% & 0.05 & 55.28 & 0.06 & 0.00\% & 35.40 \\
 &  & 5 & 56.94 & 22.07 & 58.51 & 2.75\% & 4.82 & 54.28 & 4.68\% & 0.06 & 56.94 & 0.07 & 0.00\% & 31.10 \\
 &  & 6 & 57.50 & 23.31 & 58.94 & 2.51\% & 9.04 & 55.92 & 2.74\% & 0.07 & 57.50 & 0.06 & 0.00\% & 29.69 \\
 &  & 7 & 55.21 & 24.96 & 56.51 & 2.35\% & 5.15 & 54.24 & 1.77\% & 0.06 & 55.19 & 0.06 & 0.04\% & 27.51 \\
 &  & 8 & 56.64 & 15.43 & 57.96 & 2.33\% & 3.80 & 55.42 & 2.15\% & 0.06 & 56.60 & 0.06 & 0.08\% & 29.52 \\
 &  & 9 & 59.18 & 23.14 & 60.47 & 2.19\% & 8.79 & 58.01 & 1.98\% & 0.08 & 59.14 & 0.07 & 0.07\% & 46.42 \\
 &  & 10 & 57.20 & 29.34 & 58.02 & 1.44\% & 10.08 & 56.50 & 1.21\% & 0.07 & 57.19 & 0.07 & 0.00\% & 48.08 \\ \hline
\multicolumn{3}{c|}{Average} &  & 34.83 &  & 2.15\% & 7.68 &  & 2.22\% & 0.06 &  &  & 0.03\% & 36.42 \\ \hline\hline
\multirow{10}{*}{0.1} & \multirow{10}{*}{0.02} & 1 & 55.93 & 77.63 & 57.45 & 2.72\% & 9.50 & 54.89 & 1.85\% & 0.05 & 55.92 & 0.05 & 0.01\% & 36.73 \\
 &  & 2 & 48.47 & 20.09 & 49.66 & 2.47\% & 2.77 & 47.82 & 1.34\% & 0.06 & 48.42 & 0.05 & 0.09\% & 27.42 \\
 &  & 3 & 51.14 & 110.17 & 52.34 & 2.35\% & 14.61 & 50.72 & 0.81\% & 0.07 & 51.06 & 0.04 & 0.14\% & 54.71 \\
 &  & 4 & 54.68 & 73.17 & 55.96 & 2.35\% & 12.99 & 53.74 & 1.71\% & 0.06 & 54.67 & 0.06 & 0.01\% & 43.06 \\
 &  & 5 & 56.11 & 16.04 & 58.25 & 3.81\% & 3.98 & 54.05 & 3.67\% & 0.06 & 56.11 & 0.07 & 0.00\% & 34.82 \\
 &  & 6 & 56.93 & 18.81 & 58.66 & 3.05\% & 3.61 & 55.68 & 2.19\% & 0.06 & 56.90 & 0.05 & 0.05\% & 33.87 \\
 &  & 7 & 54.67 & 37.46 & 56.26 & 2.90\% & 6.57 & 54.00 & 1.22\% & 0.05 & 54.61 & 0.06 & 0.12\% & 33.40 \\
 &  & 8 & 56.15 & 15.48 & 57.71 & 2.77\% & 4.54 & 55.20 & 1.70\% & 0.05 & 56.09 & 0.05 & 0.11\% & 26.77 \\
 &  & 9 & 58.51 & 18.82 & 60.21 & 2.91\% & 10.79 & 57.76 & 1.28\% & 0.06 & 58.48 & 0.04 & 0.05\% & 47.85 \\
 &  & 10 & 56.76 & 33.72 & 57.80 & 1.84\% & 12.03 & 56.29 & 0.83\% & 0.07 & 56.71 & 0.05 & 0.08\% & 44.74 \\ \hline
\multicolumn{3}{c|}{Average} &  & 42.14 &  & 2.72\% & 8.14 &  & 1.66\% & 0.06 &  &  & 0.07\% & 38.34 \\ \hline
\end{tabular}
\end{table}

\begin{table}[htbp]\scriptsize\centering
\caption{Numerical results of the exact formulation in Theorem~\ref{cor_reform_multi_drccp}, outer approximation in Corollary~\ref{cor_reform_multi_drccp_var}, $\CVaR$ approximation in Theorem~\ref{thm_cvar} and inner chance constrained programming approximation in Corollary~\ref{ref_math_thm_inner_2_cor1} on solving continuous DRMKP when sample size $N=1000$.}
\label{tab:my-table11}\setlength{\tabcolsep}{3pt}
\begin{tabular}{ccc|cccc|ccc|ccc|cccc}
\hline
\multirow{2}{*}{$\epsilon$} & \multirow{2}{*}{$\delta$} & \multirow{2}{*}{\begin{tabular}[c]{@{}c@{}}Insta-\\ nces\end{tabular}} & \multicolumn{4}{c|}{BigM Model} & \multicolumn{3}{c|}{$\VaR$ Model} & \multicolumn{3}{c|}{$\CVaR$ Model} & \multicolumn{4}{c}{ICCP Model} \\ \cline{4-17} 
 &  &  & UB & LB & GAP & Time & UB &  \begin{tabular}[c]{@{}c@{}}Improv-\\ ment\end{tabular} & Time & LB &  \begin{tabular}[c]{@{}c@{}}Improv-\\ ment\end{tabular} & Time & LB & $\alpha$ & \begin{tabular}[c]{@{}c@{}}Improv-\\ ment\end{tabular} & Time \\\hline
\multirow{10}{*}{0.05} & \multirow{10}{*}{0.01}  & 1 & 66.45 & 53.02 & 25.32\% & 3600 & 57.71 & 13.15\% & 3600 & 52.94 & -0.15\% & 0.72 & 53.47 & 0.025 & 0.84\% & 3600 \\
 &  & 2 & 64.48 & 52.18 & 23.57\% & 3600 & 57.07 & 11.50\% & 3600 & 52.51 & 0.62\% & 0.61 & 52.57 & 0.025 & 0.74\% & 3600 \\
 &  & 3 & 69.55 & 54.02 & 28.75\% & 3600 & 62.21 & 10.54\% & 3600 & 54.45 & 0.81\% & 1.00 & 55.13 & 0.025 & 2.06\% & 3600 \\
 &  & 4 & 68.02 & 53.91 & 26.18\% & 3600 & 61.60 & 9.45\% & 3600 & 54.23 & 0.58\% & 1.01 & 54.60 & 0.025 & 1.28\% & 3600 \\
 &  & 5 & 68.39 & 56.75 & 20.52\% & 3600 & 61.25 & 10.44\% & 3600 & 56.80 & 0.09\% & 0.65 & 57.12 & 0.025 & 0.65\% & 3600 \\
 &  & 6 & 69.66 & 56.15 & 24.05\% & 3600 & 60.74 & 12.81\% & 3600 & 56.32 & 0.29\% & 0.84 & 56.00 & 0.025 & -0.28\% & 3600 \\
 &  & 7 & 74.45 & 57.95 & 28.47\% & 3600 & 66.33 & 10.91\% & 3600 & 58.11 & 0.29\% & 1.08 & 58.65 & 0.025 & 1.21\% & 3600 \\
 &  & 8 & 74.91 & 57.42 & 30.45\% & 3600 & 66.42 & 11.33\% & 3600 & 57.86 & 0.75\% & 0.93 & 58.04 & 0.025 & 1.07\% & 3600 \\
 &  & 9 & 65.84 & 51.64 & 27.49\% & 3600 & 56.72 & 13.85\% & 3600 & 51.85 & 0.41\% & 0.51 & 52.16 & 0.025 & 1.01\% & 3600 \\
 &  & 10 & 66.26 & 50.94 & 30.06\% & 3600 & 56.11 & 15.32\% & 3600 & 51.43 & 0.96\% & 0.62 & 51.37 & 0.025 & 0.85\% & 3600 \\\hline
\multicolumn{3}{c|}{Average}  &  &  & 26.49\% & 3600 &  & 11.93\% & 3600 &  & 0.46\% & 0.80 &  &  & 0.94\% & 3600 \\\hline\hline
\multirow{10}{*}{0.05} & \multirow{10}{*}{0.02} & 1 & 72.24 & 53.22 & 35.74\% & 3600 & 60.48 & 16.27\% & 3600 & 53.10 & -0.22\% & 0.90 & 53.54 & 0.05 & 0.62\% & 3600 \\
 &  & 2 & 68.90 & 52.53 & 31.15\% & 3600 & 60.68 & 11.94\% & 3600 & 52.88 & 0.65\% & 1.18 & 53.10 & 0.05 & 1.07\% & 3600 \\
 &  & 3 & 57.60 & 46.77 & 23.14\% & 3600 & 51.53 & 10.53\% & 3600 & 46.94 & 0.35\% & 0.95 & 47.16 & 0.05 & 0.83\% & 3600 \\
 &  & 4 & 57.05 & 46.13 & 23.67\% & 3600 & 52.25 & 8.40\% & 3600 & 46.56 & 0.93\% & 0.82 & 46.41 & 0.05 & 0.61\% & 3600 \\
 &  & 5 & 60.61 & 48.11 & 25.99\% & 3600 & 54.86 & 9.49\% & 3600 & 48.12 & 0.02\% & 1.13 & 48.61 & 0.05 & 1.04\% & 3600 \\
 &  & 6 & 62.30 & 47.24 & 31.89\% & 3600 & 55.74 & 10.53\% & 3600 & 47.92 & 1.45\% & 1.49 & 48.00 & 0.05 & 1.61\% & 3600 \\
 &  & 7 & 61.50 & 48.76 & 26.12\% & 3600 & 53.23 & 13.44\% & 3600 & 49.11 & 0.72\% & 0.51 & 49.44 & 0.05 & 1.39\% & 3600 \\
 &  & 8 & 60.38 & 48.39 & 24.77\% & 3600 & 52.77 & 12.59\% & 3600 & 48.70 & 0.64\% & 0.57 & 48.62 & 0.05 & 0.46\% & 3600 \\
 &  & 9 & 65.20 & 49.94 & 30.56\% & 3600 & 57.51 & 11.80\% & 3600 & 50.36 & 0.84\% & 1.05 & 50.63 & 0.05 & 1.39\% & 3600 \\
 &  & 10 & 64.73 & 49.71 & 30.22\% & 3600 & 57.68 & 10.90\% & 3600 & 50.15 & 0.88\% & 1.08 & 50.28 & 0.05 & 1.16\% & 3600 \\\hline
\multicolumn{3}{c|}{Average}  &  &  & 28.33\% & 3600 &  & 11.59\% & 3600 &  & 0.63\% & 0.97 &  &  & 1.02\% & 3600 \\\hline\hline
\multirow{10}{*}{0.1} & \multirow{10}{*}{0.01} & 1 & 68.21 & 51.92 & 31.39\% & 3600 & 57.01 & 16.43\% & 3600 & 51.99 & 0.14\% & 0.62 & 52.38 & 0.025 & 0.88\% & 3600 \\
 &  & 2 & 67.75 & 51.22 & 32.26\% & 3600 & 57.75 & 14.75\% & 3600 & 51.57 & 0.68\% & 0.66 & 51.55 & 0.025 & 0.63\% & 3600 \\
 &  & 3 & 70.55 & 53.00 & 33.11\% & 3600 & 62.53 & 11.37\% & 3600 & 53.24 & 0.44\% & 0.78 & 53.93 & 0.025 & 1.75\% & 3600 \\
 &  & 4 & 70.47 & 52.49 & 34.26\% & 3600 & 60.78 & 13.75\% & 3600 & 53.02 & 1.01\% & 0.87 & 53.35 & 0.025 & 1.64\% & 3600 \\
 &  & 5 & 64.24 & 51.29 & 25.27\% & 3600 & 55.59 & 13.47\% & 3600 & 51.38 & 0.18\% & 0.53 & 51.78 & 0.025 & 0.96\% & 3600 \\
 &  & 6 & 63.74 & 50.69 & 25.75\% & 3600 & 54.92 & 13.84\% & 3600 & 50.95 & 0.52\% & 0.61 & 50.90 & 0.025 & 0.42\% & 3600 \\
 &  & 7 & 66.93 & 52.90 & 26.52\% & 3600 & 58.97 & 11.89\% & 3600 & 52.75 & -0.29\% & 1.31 & 53.35 & 0.025 & 0.84\% & 3600 \\
 &  & 8 & 66.56 & 52.42 & 26.97\% & 3600 & 59.37 & 10.80\% & 3600 & 52.53 & 0.20\% & 1.14 & 52.84 & 0.025 & 0.80\% & 3600 \\
 &  & 9 & 69.74 & 56.43 & 23.59\% & 3600 & 61.97 & 11.15\% & 3600 & 56.23 & -0.35\% & 0.75 & 56.83 & 0.025 & 0.71\% & 3600 \\
 &  & 10 & 67.45 & 55.72 & 21.05\% & 3600 & 60.87 & 9.76\% & 3600 & 55.78 & 0.11\% & 0.66 & 55.94 & 0.025 & 0.39\% & 3600 \\\hline
\multicolumn{3}{c|}{Average}  &  &  & 28.02\% & 3600 &  & 12.72\% & 3600 &  & 0.26\% & 0.79 &  &  & 0.90\% & 3600 \\\hline\hline
\multirow{10}{*}{0.1} & \multirow{10}{*}{0.02} & 1 & 75.06 & 58.30 & 28.75\% & 3600 & 67.10 & 10.60\% & 3600 & 57.67 & -1.08\% & 0.93 & 58.39 & 0.05 & 0.15\% & 3600 \\
 &  & 2 & 75.79 & 57.11 & 32.72\% & 3600 & 65.20 & 13.97\% & 3600 & 57.43 & 0.56\% & 0.96 & 57.87 & 0.05 & 1.34\% & 3600 \\
 &  & 3 & 71.66 & 55.49 & 29.13\% & 3600 & 59.52 & 16.93\% & 3600 & 55.21 & -0.51\% & 0.50 & 55.69 & 0.05 & 0.36\% & 3600 \\
 &  & 4 & 67.30 & 54.31 & 23.92\% & 3600 & 59.01 & 12.31\% & 3600 & 54.74 & 0.80\% & 0.56 & 54.78 & 0.05 & 0.87\% & 3600 \\
 &  & 5 & 73.91 & 57.35 & 28.88\% & 3600 & 65.21 & 11.77\% & 3600 & 56.72 & -1.09\% & 0.89 & 57.37 & 0.05 & 0.04\% & 3600 \\
 &  & 6 & 72.05 & 56.71 & 27.04\% & 3600 & 64.56 & 10.39\% & 3600 & 56.48 & -0.41\% & 1.00 & 56.88 & 0.05 & 0.29\% & 3600 \\
 &  & 7 & 70.33 & 56.87 & 23.67\% & 3600 & 60.52 & 13.94\% & 3600 & 56.25 & -1.09\% & 0.59 & 57.03 & 0.05 & 0.29\% & 3600 \\
 &  & 8 & 66.69 & 55.88 & 19.34\% & 3600 & 60.25 & 9.65\% & 3600 & 55.76 & -0.22\% & 0.77 & 56.08 & 0.05 & 0.35\% & 3600 \\
 &  & 9 & 73.51 & 58.36 & 25.94\% & 3600 & 65.38 & 11.05\% & 3600 & 57.99 & -0.64\% & 0.89 & 58.77 & 0.05 & 0.69\% & 3600 \\
 &  & 10 & 73.98 & 57.51 & 28.65\% & 3600 & 66.01 & 10.78\% & 3600 & 57.73 & 0.38\% & 1.16 & 58.18 & 0.05 & 1.17\% & 3600 \\\hline
\multicolumn{3}{c|}{Average}  &  &  & 26.80\% & 3600 &  & 12.14\% & 3600 &  & -0.33\% & 0.83 &  &  & 0.55\% & 3600\\\hline
\end{tabular}
\end{table}

\subsection{Choosing a Wasserstein Radius using Cross Validation}

In this subsection, we use continuous DRMKP (i.e., $S=[0,1]^n$ in \eqref{eq_drccp_con_mdk}) to numerically demonstrate how to use cross validation to choose a proper Wasserstein radius $\delta$ and also test the effects of the correlation of the random vectors $(\tilde{\rxi}_1,\ldots,\tilde{\rxi}_I)$. We suppose that $\epsilon=0.05$, $n=20$ and $I=10$, and $\tilde{\rxi}_i=\rho\bar{\rxi}+(1-\rho)\hat{\rxi}_i$ for each $i\in [I]$, where $\bar{\rxi},\hat{\rxi}_1,\ldots,\hat{\rxi}_I $ are independent uniform random vectors over the box $[1,10]^{I\times n}$, and $\rho\in [0,1]$. Clearly, as $\rho$ grows, the correlation among random vectors $(\tilde{\rxi}_1,\ldots,\tilde{\rxi}_I)$ increases. Our numerical instances were generated as follows. We first generated $N=100$ i.i.d. samples $\{(\hat{\bm \zeta}_1^j,\ldots,\hat{\bm \zeta}_I^j)\}_{j\in [N]}\in \Re_+^{I\times n}$ from a uniform distribution over a box $[1,10]^{I\times n}$, and $N=100$ i.i.d. samples $\{\bar{\bm \zeta}^j\}_{j\in [N]}\in \Re_+^{n}$ from a uniform distribution over a box $[1,10]^{n}$, where $n=20$ and $I=10$. Next, we constructed 11 instances with $N=100$ empirical samples as 
$$\bm \zeta_i^j=\rho \bar{\bm \zeta}^j+(1-\rho)\hat{\bm \zeta}_i^j$$
for each $i\in [I]$ and $j\in [N]$, and $\rho\in\{0,0.1,\ldots,1\}$. 
For each $l \in [n]$, we independently generated $c_l$ from the uniform distribution on the interval $[1, 10]$, while for each $i \in [I]$, we set $b^i := 50$. We also suppose that the possible Wasserstein radii are from $\delta\in \{0.01,0.02,\ldots,0.1\}$.

The cross validation procedure was done in the following manner: (i) for each $\delta\in \{0.01,0.02,\ldots,0.1\}$, we solved continuous DRMKP to the optimality using exact formulation in Proposition~\ref{thm_reform_joint_drccp}; (ii) we generated $10^4$ samples of the random vectors $(\tilde{\rxi}_1,\ldots,\tilde{\rxi}_I)$ and used these samples to estimate the violation probability of uncertain constraints with respect to the optimal solution found at step (i). We repeated this procedure 10 times and output the 90-percentile of these estimated violation probabilities, denoted as 90-percentile violation; and (iii) we chose the best Wasserstein radius $\delta^*$ as the smallest $\delta\in \{0.01,0.02,\ldots,0.1\}$ such that its 90-percentile violation is below the target violation, i.e., less than or equal to $\epsilon=0.05$, \bl{which implies that approximately with probability at least 0.9, the DRMKP solution will be feasible to its regular CCP Model}.

For the comparison purpose, we also solve a regular chance constrained programming counterpart of DRMKP \eqref{eq_drccp_con_mdk} with respect to the empirical samples $\{{\bm \zeta}^j\}_{j\in [N]}$ and used the same procedure to compute its 90-percentile violation. The numerical results are displayed in Table~\ref{tab:my-table2}, where we use ``CCP Model" to denote the chance constrained programming counterpart of DRMKP, and ``Opt.Val" to denote the optimal value of a corresponding model. 

From Table~\ref{tab:my-table}, we see that for the optimal solutions from CCP Model often have much higher probability of violating the uncertain constraints than the target risk parameter $\epsilon=0.05$. On the other hand, for DRMKP Model, by choosing Wasserstein radius properly, its probability of violating the uncertain constraints is often smaller than the target risk parameter, and its optimal value is often very close to that of CCP Model. This demonstrates the robustness and accuracy of the proposed DRMKP Model. We also note that when $\rho$ grows, i.e., the correlation between among random vectors $(\tilde{\rxi}_1,\ldots,\tilde{\rxi}_I)$ increases, the best Wasserstein radius $\delta^*$ does not tend to decrease or increase. This result demonstrates that Assumption~\ref{assume_A2} does not cause too much over-conservatism of the proposed DRCCP models. In fact, we note that the best Wasserstein radius $\delta^*$ is positively correlated with the 90-percentile violation of CCP Model, i.e., $\delta^*$ tends to be bigger if CCP Model has a larger 90-percentile violation value. In practice, we suggest solving regular CCP Model first and then choose a proper range of $\delta$ for the cross validation. Finally, if the cross validation takes too much time due to the difficulty of solving MILPs, then we can reduce the running time via warm start. That is, we suggest solving the cross validation instances in the descending order of possible $\delta$ values, and when solving a cross validation instance, since the optimal solution from previous instance is feasible to the current one, thus, we can input this solution to the solver as a starting point.
\begin{table}[htbp]\scriptsize
\centering
\caption{Illustration of choosing a Wasserstein radius using cross validation. \bl{Note: $\delta$ is chosen from $\{0.01,0.02,\ldots,0.1\}$}}
\label{tab:my-table2}
\begin{tabular}{cccc|cc|c}
\hline
\multicolumn{1}{c|}{\multirow{2}{*}{$\rho$}} & \multicolumn{3}{c|}{DRMKP Model} & \multicolumn{2}{c|}{CCP Model} & \multirow{2}{*}{\begin{tabular}[c]{@{}c@{}}Target\\ Violation\\  ($\epsilon$)\end{tabular}} \\ \cline{2-6}
\multicolumn{1}{c|}{} & $\delta^*$ & Opt.Val & \begin{tabular}[c]{@{}c@{}}90-Percentile\\ Violation\end{tabular} & Opt.Val & \begin{tabular}[c]{@{}c@{}}90-Percentile \\ Violation\end{tabular} &  \\ \hline
\multicolumn{1}{c|}{0} & 0.03 & 53.76 & 0.04154 & 56.99 & 0.13461 & \multirow{11}{*}{0.05} \\
\multicolumn{1}{c|}{0.1} & 0.02 & 50.06 &  0.04431 & 52.67 &0.08698 &  \\
\multicolumn{1}{c|}{0.2} & 0.03 & 52.37 &0.03133
 & 55.11 & 0.15273 &  \\
\multicolumn{1}{c|}{0.3} & 0.01 & 56.94 & 0.03905 & 58.33 & 0.09624 &  \\
\multicolumn{1}{c|}{0.4} & 0.02 & 53.38 & 0.02801 & 55.89 & 0.12054 &  \\
\multicolumn{1}{c|}{0.5} & 0.02 & 50.25 & 0.03249 & 52.13 & 0.09629 &  \\
\multicolumn{1}{c|}{0.6} & 0.01 & 59.38 & 0.04671 & 60.98 & 0.08015 &  \\
\multicolumn{1}{c|}{0.7} & 0.03 & 54.60 & 0.04742 & 57.77 & 0.12871 &  \\
\multicolumn{1}{c|}{0.8} & 0.03 & 62.51 & 0.04678 & 66.39 & 0.11837 &  \\
\multicolumn{1}{c|}{0.9} & 0.03 & 52.82 & 0.0364 & 56.90 & 0.13221 &  \\
\multicolumn{1}{c|}{1} & 0.02 & 59.51 & 0.03998 & 62.09 & 0.09496 &  \\ \hline
\end{tabular}
\end{table}

\subsection{Binary DRMKP: Strength of Big-M Free Formulation}

In this subsection, we present a numerical study to compare the big-M formulation in Theorem~\ref{cor_reform_multi_drccp} with big-M free formulation in Corollary~\ref{cor_reform_joint_drccp_binary} on solving binary DRMKP (i.e., $S=\{0,1\}^n$ in \eqref{eq_drccp_con_mdk}). 
To test the proposed formulations, we generated 10 random instances with $n=20$ and $I=10$, indexed by $\{1,2,\ldots,10\}$. For each instance, we generated $N=1000$ empirical samples $\{\bm \zeta^j\}_{j\in [N]}\in \Re_+^{I\times n}$ from a uniform distribution over a box $[1,10]^{I\times n}$. For each $l \in [n]$, we independently generated $c_l$ from the uniform distribution on the interval $[1, 10]$, while for each $i \in [I]$, we set $b^i := 100$. We tested these 10 random instances with risk parameter $\epsilon \in \{0.05,0.10\}$ and Wasserstein radius $\delta\in \{0.1,0.2\}$. Also, in BigM Model \eqref{zset_P_W_I_1+_bigM}, we chose 
$$M_j=\max_{i\in [I]}\max\left\{b^i,\sum_{l\in [n]}\zeta_{il}^j-b^i\right\},\forall j\in [N],$$
and a lower bound of $\nu$ as $\underline{\nu}=1$. \bl{In the branch and cut implementation described in the end of Section~\ref{sec_DRMKP}, each time we added $\kappa=10$ EPI inequalities.}

The results are displayed in Table~\ref{table_num_results_drccp}. 
We use BigM Model and BigM-free Model to denote the big-M formulation in Theorem~\ref{cor_reform_multi_drccp} and big-M free formulation in Corollary~\ref{cor_reform_joint_drccp_binary}, respectively. In addition, we use UB, LB, GAP, Opt.Val and Time to denote the best upper bound, the best lower bound, optimality gap, the optimal objective value, and the total running time in seconds, respectively. 

\begin{table}[htbp]\scriptsize
	~\vspace{-10pt}
	\centering
	\caption{Numerical comparison of big-M formulation in Theorem~\ref{cor_reform_multi_drccp} and big-M free formulation in Corollary~
\ref{cor_reform_joint_drccp_binary} on solving binary DRMKP}
	\label{table_num_results_drccp}
\begin{tabular}{ccccc|cccc|cc}
	\hline
	\multirow{2}{*}{$\epsilon$} &\multirow{2}{*}{$\delta$}  & \multirow{2}{*}{Instances} & \multirow{2}{*}{$n$} & \multirow{2}{*}{$I$} & \multicolumn{4}{c|}{BigM Model} & \multicolumn{2}{c}{BigM-free Model} \\ \cline{6-11}
	& &  &  &  & UB&LB & Time &GAP & Opt.Val & Time \\ \hline
\multirow{10}{*}{0.05} & \multirow{10}{*}{0.1} & 1 & 20 & 10 & 93 & 86 & 3600.0 & 7.5\% & 89 & 49.3 \\
 &  & 2 & 20 & 10 & 97 & 90 & 3600.0 & 7.2\% & 95 & 30.6 \\
 &  & 3 & 20 & 10 & 95 & 84 & 3600.0 & 11.6\% & 90 & 387.0 \\
 &  & 4 & 20 & 10 & 84 & 74 & 3600.0 & 11.9\% & 78 & 275.7 \\
 &  & 5 & 20 & 10 & 87 & 81 & 3600.0 & 6.9\% & 82 & 140.4 \\
 &  & 6 & 20 & 10 & 97 & 85 & 3600.0 & 12.4\% & 88 & 972.5 \\
 &  & 7 & 20 & 10 & 89 & 75 & 3600.0 & 15.7\% & 84 & 169.6 \\
 &  & 8 & 20 & 10 & 100 & 88 & 3600.0 & 12.0\% & 96 & 80.5 \\
 &  & 9 & 20 & 10 & 96 & 78 & 3600.0 & 18.8\% & 92 & 59.3 \\
 &  & 10 & 20 & 10 & 93 & 93 & 3542.7 & 0.0\% & 93 & 18.2 \\\hline
\multicolumn{5}{c|}{Average} &  &  & 3594.3 & 10.4\% &  & 218.3 \\\hline\hline
\multirow{10}{*}{0.1} & \multirow{10}{*}{0.1} & 1 & 20 & 10 & 100 & NA & 3600.0 & NA & 92 & 172.9 \\
 &  & 2 & 20 & 10 & 106 & NA & 3600.0 & NA & 99 & 164.0 \\
 &  & 3 & 20 & 10 & 105 & 87 & 3600.0 & 17.1\% & 93 & 569.1 \\
 &  & 4 & 20 & 10 & 92 & 67 & 3600.0 & 27.2\% & 82 & 600.5 \\
 &  & 5 & 20 & 10 & 95 & NA & 3600.0 & NA & 86 & 332.0 \\
 &  & 6 & 20 & 10 & 109 & NA & 3600.0 & NA & 94 & 1852.4 \\
 &  & 7 & 20 & 10 & 96 & NA & 3600.0 & NA & 88 & 279.8 \\
 &  & 8 & 20 & 10 & 108 & 82 & 3600.0 & 24.1\% & 100 & 133.2 \\
 &  & 9 & 20 & 10 & 102 & NA & 3600.0 & NA & 94 & 389.3 \\
 &  & 10 & 20 & 10 & 103 & 96 & 3600.0 & 6.8\% & 96 & 149.7 \\\hline
\multicolumn{5}{c|}{Average} &  &  & 3600.0 & 18.8\% &  & 464.3 \\\hline\hline
\multirow{10}{*}{0.05} & \multirow{10}{*}{0.2} & 1 & 20 & 10 & 87 & 87 & 665.8 & 0.0\% & 87 & 8.5 \\
 &  & 2 & 20 & 10 & 88 & 88 & 2473.2 & 0.0\% & 88 & 19.3 \\
 &  & 3 & 20 & 10 & 86 & 86 & 1391.3 & 0.0\% & 86 & 70.4 \\
 &  & 4 & 20 & 10 & 74 & 74 & 2881.7 & 0.0\% & 74 & 102.5 \\
 &  & 5 & 20 & 10 & 78 & 78 & 1553.5 & 0.0\% & 78 & 26.9 \\
 &  & 6 & 20 & 10 & 86 & 86 & 2776.2 & 0.0\% & 86 & 442.7 \\
 &  & 7 & 20 & 10 & 83 & 83 & 1413.9 & 0.0\% & 83 & 17.1 \\
 &  & 8 & 20 & 10 & 92 & 92 & 297.7 & 0.0\% & 92 & 21.0 \\
 &  & 9 & 20 & 10 & 90 & 90 & 148.5 & 0.0\% & 90 & 14.6 \\
 &  & 10 & 20 & 10 & 90 & 90 & 1074.2 & 0.0\% & 90 & 8.9 \\\hline
\multicolumn{5}{c|}{Average} &  &  & 1467.6 & 0.0\% &  & 73.2 \\\hline\hline
\multirow{10}{*}{0.1} & \multirow{10}{*}{0.2} & 1 & 20 & 10 & 96 & 85 & 3600.0 & 11.5\% & 92 & 34.3 \\
 &  & 2 & 20 & 10 & 103 & 88 & 3600.0 & 14.6\% & 99 & 16.5 \\
 &  & 3 & 20 & 10 & 98 & 93 & 3600.0 & 5.1\% & 93 & 175.4 \\
 &  & 4 & 20 & 10 & 86 & 82 & 3600.0 & 4.7\% & 82 & 243.5 \\
 &  & 5 & 20 & 10 & 90 & NA & 3600.0 & NA & 86 & 84.7 \\
 &  & 6 & 20 & 10 & 101 & 81 & 3600.0 & 19.8\% & 94 & 524.6 \\
 &  & 7 & 20 & 10 & 90 & 88 & 3600.0 & 2.2\% & 88 & 93.1 \\
 &  & 8 & 20 & 10 & 103 & NA & 3600.0 & NA & 100 & 53.4 \\
 &  & 9 & 20 & 10 & 97 & 94 & 3600.0 & 3.1\% & 94 & 75.5 \\
 &  & 10 & 20 & 10 & 99 & 89 & 3600.0 & 10.1\% & 96 & 14.1 \\\hline
\multicolumn{5}{c|}{Average} &  &  & 3600.0 & 8.9\% &  & 131.5\\\hline
\end{tabular}
\\ 
$^{*}$ The NA represents that no feasible solution has been found within the time limit
\end{table}

From Table~\ref{table_num_results_drccp}, we observe that the overall running time of BigM-free Model significantly outperforms that of BigM Model, i.e., almost all of the instances of BigM-free Model can be solved within 10 minutes, while the majority of the instances of BigM Model reach the time limit.
The main reasons are two-fold: (i) BigM Model involves $\mathcal{O}(N+n)$ binary variables and $\mathcal{O}(N\times I)$ continuous variables, while BigM-free Model only involves $\mathcal{O}(n)$ binary variables and $\mathcal{O}(N)$ continuous variables; and (ii) BigM Model contains big-M coefficients, while BigM-free Model does not. We also observe that, as the risk parameter $\epsilon$ increases or Wasserstein radius $\delta$ decreases, both formulations take longer time to solve, but BigM-free Model still significantly outperforms BigM Model. These results demonstrate the effectiveness of our proposed BigM-free Model.

}
\section{Conclusion}\label{sec_conclusion}
In this paper, we studied a distributionally robust chance constrained problem (DRCCP) with Wasserstein ambiguity set. We showed that a DRCCP could be formulated as a conditional value-at-risk constrained optimization, thus admits tight inner and outer approximations. Once the feasible region is bounded, we showed that a DRCCP could be mixed integer representable with big-M coefficients and additional binary variables, i.e., a DRCCP can be formulated as a mixed integer conic program. We also compared various inner and outer approximations and proved their corresponding inclusive relations. We further proposed a big-M free formulation for a binary DRCCP and a branch and cut solution algorithm. The numerical studies demonstrated that the proposed formulations are quite promising.

\section*{Acknowledgments}  The author would like to thank Professor Shabbir Ahmed (Georgia Tech) for his helpful comments
on an earlier version of the paper. {Valuable comments from the editors and three anonymous reviewers are gratefully acknowledged.}

\bibliography{Reference}

\end{document}